 \newtheorem{theorem}{Theorem}[section]
 \newtheorem{corollary}[theorem]{Corollary}
 \theoremstyle{definition}
 \newtheorem{property}[theorem]{Property}
 \newtheorem{definition}[theorem]{Definition}
 \theoremstyle{remark}
 \newtheorem*{exercise}{Exercise}
 \numberwithin{equation}{section}
\begin{document}
%
%
%
%
%
%
%
%
%
\title[OPS, random matrices, Painlev\'e equations]{Orthogonal and multiple orthogonal poly\-nomials, random matrices, and Painlev\'e equations}
\author{Walter Van Assche}

\address{%
Department of Mathematics\\
KU Leuven\\
Celestijnenlaan 200B box 2400\\
BE 3001 Leuven, Belgium}

\email{walter.vanassche@kuleuven.be}

\subjclass{Primary 33C45, 42C05, 60B20, 33E17; Secondary 15B52, 34M55, 41A21}

\keywords{Orthogonal polynomials, random matrices, multiple orthogonal polynomials, Painlev\'e equations}

\date{\today}

\begin{abstract}
Orthogonal polynomials and multiple orthogonal polynomials are interesting special functions because there is a beautiful theory for them, with many examples and useful applications in mathematical physics, numerical analysis,
statistics and probability and many other disciplines. In these notes we give an introduction to the use of orthogonal  polynomials in random matrix theory, we explain the notion of multiple orthogonal polynomials, and we show the link with certain non-linear difference and differential equations known as Painlev\'e equations.
\end{abstract}

\maketitle
\setcounter{tocdepth}{2}
\tableofcontents

\section{Introduction}
For these lecture notes I assume the reader is familiar with the basic theory of
orthogonal polynomials, in particular the classical orthogonal polynomials (Jacobi, Laguerre, Hermite) should be
known. In this introduction we will fix the notation and terminology. Let $\mu$ be a positive measure on the real line
for which all the moments $m_n$, $n \in \mathbb{N} = \{0,1,2,3,\ldots\}$ exist, where
\[       m_n = \int_{\mathbb{R}} x^n \, d\mu(x)   .   \]
The orthonormal polynomials $(p_n)_{n \in \mathbb{N}}$ are such that $p_n(x) = \gamma_n x^n + \cdots$, with $\gamma_n >0$, satisfying
the orthogonality condition  
\[   \int_{\mathbb{R}} p_n(x) p_m(x) \, d\mu(x) = \delta_{m,n}, \qquad m,n \in \mathbb{N}.  \]
It is well known that the zeros of $p_n$ are real and simple, and we denote them by 
\[  x_{1,n} < x_{2,n} < \cdots < x_{n,n} . \]
Orthonormal polynomials on the real line always satisfy a three-term recurrence relation
\begin{equation}   \label{3TRR}
    xp_n(x) = a_{n+1}p_{n+1}(x) + b_n p_n(x) +a_n p_{n-1}(x), \qquad n \geq 1, 
\end{equation}
with initial condition $p_0 = 1/\sqrt{m_0}$ and $p_{-1}=0$, with recurrence coefficients $a_{n+1} >0$ and $b_n \in \mathbb{R}$ for $n \geq 0$.
Often we will also use monic orthogonal polynomials, which we denote by capital letters:
\[    P_n(x) = \frac{1}{\gamma_n} p_n(x) = x^n + \cdots. \]
Their recurrence relation is of the form
\begin{equation}   \label{M3TRR}
    P_{n+1}(x) = (x-b_n) P_n(x) - a_n^2 P_{n-1}(x), 
\end{equation}
with initial conditions $P_0=1$ and $P_{-1}=0$. The classical families of orthogonal polynomials are
\begin{itemize}
  \item The \textit{Jacobi} polynomials $P_n^{(\alpha,\beta)}$, for which
  \[    \int_{-1}^1 P_n^{(\alpha,\beta)}(x) P_m^{(\alpha,\beta)}(x) (1-x)^\alpha (1+x)^\beta\, dx = 0, \qquad m \neq n,  \]
  with parameters $\alpha, \beta > -1$.
  \item The \textit{Laguerre polynomials} $L_n^{(\alpha)}$ for which
  \[    \int_0^\infty  L_n^{(\alpha)}(x)L_m^{(\alpha)}(x) x^\alpha e^{-x} \, dx = 0, \qquad m \neq n, \]
  with parameter $\alpha > -1$.
  \item The \textit{Hermite polynomials} $H_n(x)$ for which
  \[     \int_{-\infty}^\infty H_n(x) H_m(x) e^{-x^2}\, dx = 0, \qquad m \neq n.   \] 
\end{itemize}     		
Usually these polynomials are neither normalized nor monic but another normalization is used (for historical reasons) and one has to be a bit careful with
some of the general formulas for orthonormal or monic orthogonal polynomials.

The matrix 
\[   H_n = \begin{pmatrix}  m_0 & m_1 & m_2 & \cdots & m_{n-1} \\
                                  m_1 & m_2 & m_3 & \cdots & m_n \\
                                  m_2 & m_3 & m_4 & \cdots & m_{n+1} \\
                                  \vdots & \vdots & \vdots & \cdots & \vdots \\
                                  m_{n-1} & m_n & m_{n+1} & \cdots & m_{2n-2}
                      \end{pmatrix}  =  \bigl( m_{i+j-2} \bigr)_{i,j=1}^{n}  \]
is the \textit{Hankel matrix} with the moments of the orthogonality measure $\mu$. 
The \textit{Hankel determinant} is 
\begin{equation} \label{Hankeldet}
    D_n = \det \begin{pmatrix}  m_0 & m_1 & m_2 & \cdots & m_{n-1} \\
                                  m_1 & m_2 & m_3 & \cdots & m_n \\
                                  m_2 & m_3 & m_4 & \cdots & m_{n+1} \\
                                  \vdots & \vdots & \vdots & \cdots & \vdots \\
                                  m_{n-1} & m_n & m_{n+1} & \cdots & m_{2n-2}
                      \end{pmatrix}  = \det \bigl( m_{i+j-2} \bigr)_{i,j=1}^{n} .  
\end{equation}
If the support of $\mu$ contains infinitely many points, then
$D_n > 0$ for all $n \in \mathbb{N}$.

The monic orthogonal polynomials $P_n(x)$ are given by
\begin{equation}   \label{detP}
    P_n(x) = \frac{1}{D_n} \det  \begin{pmatrix}
                                       m_0 & m_1 & m_2 & \cdots & m_{n} \\
                                  m_1 & m_2 & m_3 & \cdots & m_{n+1} \\
                                  m_2 & m_3 & m_4 & \cdots & m_{n+2} \\
                                  \vdots & \vdots & \vdots & \cdots & \vdots \\
                                  m_{n-1} & m_n & m_{n+1} & \cdots & m_{2n-1} \\
                                  1 & x & x^2 & \cdots & x^n
                      \end{pmatrix},  
\end{equation}
and  
\begin{equation}   \label{gammaD}
   \frac{1}{\gamma_n^2} = \int_{\mathbb{R}} P_n^2(x)\, d\mu(x) = \frac{D_{n+1}}{D_n}.  
\end{equation}

The \textit{Christoffel-Darboux kernel} is defined as
\[   K_n(x,y) =  \sum_{k=0}^{n-1} \gamma_k^2 P_k(x)P_k(y) = \sum_{k=0}^{n-1} p_k(x)p_k(y)  . \]
This Christoffel-Darboux kernel is a reproducing kernel:
for every polynomial $q_{n-1}$ of degree $\leq n-1$ one has
\[    \int K_n(x,y) q_{n-1}(y) \, d\mu(y) = q_{n-1}(x).   \]
If $f$ is a function in $L^2(\mu)$, then
\[       \int K_n(x,y) f(y)\, d\mu(y) = f_{n-1}(x)  \]
gives a polynomial of degree $\leq n-1$ which is the least squares approximant of $f$ in the space of polynomials of degree $\leq n-1$.
The Christoffel-Darboux kernel is a sum of $n$ terms containing all the polynomials $p_0,p_1,\ldots,p_{n-1}$, but there is a nice formula that expresses the kernel
in just two terms containing the polynomials $p_{n-1}$ and $p_n$ only:
\begin{property}
The Christoffel-Darboux formula is
\[    \sum_{k=0}^{n-1} \gamma_k^2 P_k(x)P_k(y) = \gamma_{n-1}^2 \frac{P_n(x)P_{n-1}(y) - P_{n-1}(x)P_n(y)}{x-y} , \]
and its confluent version is
\[   \sum_{k=0}^{n-1} \gamma_k^2 P_k^2(x) = \gamma_{n-1}^2 \Bigl( P_n'(x)P_{n-1}(x) - P_{n-1}'(x)P_n(x) \Bigr). \]
\end{property}
The version for orthonormal polynomials is
\begin{property}
The Christoffel-Darboux formula is
\[    \sum_{k=0}^{n-1} p_k(x)p_k(y) = a_n \frac{p_n(x)p_{n-1}(y) - p_{n-1}(x)p_n(y)}{x-y} , \]
and its confluent version is
\[   \sum_{k=0}^{n-1}  p_k^2(x) = a_n \Bigl( p_n'(x)p_{n-1}(x) - p_{n-1}'(x)p_n(x) \Bigr). \]
\end{property}

\section{Orthogonal polynomials and random matrices}   \label{sec:RM}
The link between orthogonal polynomials and random matrices is via the Chris\-tof\-fel-Darboux kernel and Heine's formula for
orthogonal polynomials, see Property \ref{prop:2.1}. Useful references for random matrices are Mehta's book \cite{Mehta}, the book by Anderson, Guionnet and Zeitouni \cite{AGZ}, and Deift's monograph \cite{Deift}. 
First of all, let $x_1,x_2,\ldots,x_n$ be real or complex numbers, then we define the
\textit{Vandermonde determinant} as
\begin{equation}  \label{Vandermonde}
   \Delta_n(x_1,\ldots,x_n) = \det \begin{pmatrix}
                             1 & 1 & 1 & \cdots & 1 \\
                             x_1 & x_2 & x_3 & \cdots & x_n \\[4pt]
                             x_1^2 & x_2^2 & x_3^2 & \cdots & x_n^2 \\
                             \vdots & \vdots & \vdots & \cdots & \vdots \\
                             x_1^{n-1} & x_2^{n-1} & x_3^{n-1} & \cdots & x_n^{n-1} 
                                   \end{pmatrix}.  
\end{equation}
This Vandermonde determinant can be evaluated explicitly:
\[    \Delta_n = \prod_{i > j} (x_i-x_j) .  \]
From this it is clear that $\Delta_n \neq 0$ when all the $x_i$ are distinct, and if $x_1 < x_2 < \cdots < x_n$ then $\Delta_n >0$. 
Heine's formula expresses the Hankel determinant with the moments of a measure $\mu$ as an $n$-fold integral:

\begin{property}[Heine]   \label{prop:2.1}
The Hankel determinants $D_n$ in \eqref{Hankeldet} can be written as
\begin{equation}  \label{HeineD}
     D_n = \frac{1}{n!} \int_{-\infty}^\infty \cdots \int_{-\infty}^\infty  \Delta_n^2(x_1,\ldots,x_n)\, d\mu(x_1) \cdots d\mu(x_n) , 
\end{equation}
where $\Delta_n$ is the Vandermonde determinant \eqref{Vandermonde}. Furthermore, the monic orthogonal polynomial $P_n(x)$ is also given by an $n$-fold integral
\begin{equation}  \label{HeineP}
   P_n(x) = \frac{1}{n!D_n}  \int_{-\infty}^\infty \cdots \int_{-\infty}^\infty \prod_{i=1}^n (x-x_i)
  \  \Delta_n^2(x_1,\ldots,x_n) \, d\mu(x_1)\cdots d\mu(x_n) . 
\end{equation}
\end{property}

\begin{proof}
If we write all the moments in the first row of \eqref{Hankeldet} as an integral and use linearity of the determinant (for one row), then
\[   D_n = \int_{-\infty}^\infty \det \begin{pmatrix}  1 & x_1 & x_1^2 & \cdots & x_1^{n-1} \\
                                  m_1 & m_2 & m_3 & \cdots & m_n \\
                                  m_2 & m_3 & m_4 & \cdots & m_{n+1} \\
                                  \vdots & \vdots & \vdots & \cdots & \vdots \\
                                  m_{n-1} & m_n & m_{n+1} & \cdots & m_{2n-2}
                      \end{pmatrix} \, d\mu(x_1).  \]
Repeating this for every row gives
\[   D_n = \int_{-\infty}^\infty  \cdots \int_{-\infty}^\infty \det \begin{pmatrix}  1 & x_1 & x_1^2 & \cdots & x_1^{n-1} \\
                                  x_2 & x_2^2 & x_2^3 & \cdots & x_2^n \\
                                  x_3^2 & x_3^3 & x_3^4 & \cdots & x_3^{n+1} \\
                                  \vdots & \vdots & \vdots & \cdots & \vdots \\
                                  x_n^{n-1} & x_n^n & x_n^{n+1} & \cdots & x_n^{2n-2}
                      \end{pmatrix}\, d\mu(x_1)\cdots d\mu(x_n).  \]
In each row we can take out the common factors to find
\begin{equation*}
   D_n =  \int_{\mathbb{R}^n} \prod_{j=1}^n x_j^{j-1}  \, \Delta_n(x_1,\ldots,x_n)\, d\mu(x_1)\cdots d\mu(x_n).
\end{equation*}
Now write the integral over $\mathbb{R}^n$ as a sum of integrals over all simplices $x_{i_1} < x_{i_2} < \cdots < x_{i_n}$, where 
$\sigma=(i_1,i_2,\ldots,i_n)$ is a permutation of $(1,2,\ldots,n)$. Then
\[    D_n = \sum_{\sigma \in S_n} \int_{x_{\sigma(1)} < \cdots < x_{\sigma(n)}} \prod_{j=1}^n x_j^{j-1}  
\, \Delta_n(x_1,x_2,\ldots,x_n)\, d\mu(x_1)\cdots d\mu(x_n). \]
With the change of variables $x_{\sigma(j)}=y_j$ one has $x_j = y_{\tau(j)}$, with $\tau=\sigma^{-1}$ and
\[   D_n =  \int_{y_1 < \cdots < y_n} \sum_{\tau \in S_n} \prod_{j=1}^n y_{\tau(j)}^{j-1}  
\, \Delta_n(y_{\tau(1)},\ldots,y_{\tau(n)})\, d\mu(y_1)\cdots d\mu(y_n). \]
Observe that $\Delta_n(y_{\tau(1)},\ldots,y_{\tau(n)}) = \textup{sign}(\tau) \Delta_n(y_1,\ldots,y_n)$, so that
\[   D_n = \int_{y_1 < \cdots < y_n} \left( \sum_{\tau \in S_n} \textup{sign}(\tau) \prod_{j=1}^n y_{\tau(j)}^{j-1} \right) 
\, \Delta_n(y_1,\ldots,y_n)\, d\mu(y_1)\cdots d\mu(y_n). \]
Now use
\[    \sum_{\tau \in S_n} \textup{sign}(\tau) \prod_{j=1}^n y_{\tau(j)}^{j-1} = \Delta_n(y_1,\ldots,y_n)  \]
to find
\[   D_n = \int_{y_1 < \cdots < y_n}  \Delta_n^2(y_1,\ldots,y_n)\, d\mu(y_1)\cdots d\mu(y_n). \]
This is an integral over one simplex $y_1<y_2<\cdots<y_n$ in $\mathbb{R}^n$. This integral is the same for every simplex, and since there are $n!$ simplices (because there are $n!$
permutations of $(1,2,\ldots,n)$), we find the required formula \eqref{HeineD}.

The proof for formula \eqref{HeineP} is similar, using the determinant expression \eqref{detP} for the monic orthogonal polynomial. 
\end{proof}

\noindent It is remarkable that Szeg\H{o} writes in his book \cite{Szego}:
\begin{quote}
[These] Formulas \ldots\ are not suitable in general for derivation of properties of the polynomials in question. 
To this end we shall generally prefer the orthogonality property itself, or other representations derived by means of the orthogonality property. 
\end{quote}
Heine's formulas have now become crucial in the theory of random matrices.

\subsection{Point processes}

A $n$-point process is a stochastic process where a set of $n$ points $\{X_1,\ldots,X_n\}$ is selected, and the joint distribution of the
random variables $(X_1,X_2,\ldots,X_n)$ is given. Since we are dealing with a set of $n$ random numbers, the order of the random variables is irrelevant
and hence we use a probability distribution which is invariant under permutations.
Our interest is in the $n$-point process where the joint probability distribution has a density (with respect to the product measure $d\mu(x_1)\ldots d\mu(x_n)$) given by
\begin{equation}  \label{Ppoint}
     P(x_1,x_2,\ldots,x_n)  = \frac{1}{n! D_n} \Delta_n^2(x_1,\ldots,x_n) , 
\end{equation}
where we mean that 
\[   \textup{Prob}(X_1 \leq y_1,\ldots, X_n \leq y_n) = \int_{-\infty}^{y_1}\ldots \int_{-\infty}^{y_n} P(x_1,\ldots,x_n)\, d\mu(x_1)\cdots d\mu(x_n). \] 
Observe that by Heine's formula \eqref{HeineD} this is indeed a probability distribution since it is positive and integrates over $\mathbb{R}^n$ to one.
The points in this $n$-point process are not independent and the factor $\Delta_n^2(x_1,\ldots,x_n)$ describes the dependence of the points. Two points are unlikely to be close together
because then $\Delta_n^2(x_1,\ldots,x_n) = \prod_{j > i} (x_j-x_i)^2$ is small and by the maximum likelihood principle the points will prefer to choose a position that
maximizes $\Delta_n^2(x_1,\ldots,x_n)$. This $n$-point process therefore has points that repel each other.

An important property of this $n$-point process is that it is a \textit{determinantal point process}. To see this, we will express the probability density in terms of the
Christoffel-Darboux kernel. We need a few important properties of that kernel.
\begin{property}  \label{prop:CD}
The Christoffel-Darboux kernel satisfies
\[   \int_{-\infty}^\infty  K_n(x,y) K_n(y,z) \, d\mu(y) = K_n(x,z),  \]
and
\[     \int_{-\infty}^\infty  K_n(x,x) \, d\mu(x) = n .   \]
\end{property} 
\begin{proof}
The first property follows from the reproducing property of the Christoffel-Darboux kernel.
For the second property we have
\[   \int_{-\infty}^\infty K_n(x,x)\, d\mu(x) = \sum_{k=0}^{n-1} \int_{-\infty}^\infty p_k^2(x)\, d\mu(x) = n. \]
\end{proof}

\begin{property}
The density \eqref{Ppoint} can be written as
\[      P(x_1,x_2,\ldots,x_n) = \frac{1}{n!}\det \bigl(  K_n(x_i,x_j) \bigr)_{i,j=1}^n ,   \]
where $K_n$ is the Christoffel-Darboux kernel.
\end{property}
\begin{proof}
If we add rows in the Vandermonde determinant \eqref{Vandermonde}, then 
\[   \Delta_n(x_1,\ldots,x_n) = \det \begin{pmatrix}
                                     P_0(x_1) & P_0(x_2) & P_0(x_3) & \cdots & P_0(x_n) \\
                                     P_1(x_1) & P_1(x_2) & P_1(x_3) & \cdots & P_1(x_n) \\
                                     P_2(x_1) & P_2(x_2) & P_2(x_3) & \cdots & P_2(x_n) \\
                                      \vdots & \vdots & \vdots & \cdots & \vdots \\
                                     P_{n-1}(x_1) & P_{n-1}(x_2) & P_{n-1}(x_3) & \cdots & P_{n-1}(x_n) 
                                 \end{pmatrix},  \]
for any sequence $(P_0,P_1,P_2,\ldots,P_{n-1})$ of monic polynomials. If we take the monic orthogonal polynomials, then
\begin{multline*} \left( \prod_{j=0}^{n-1} \gamma_j^2 \right) \Delta_n^2(x_1,\ldots,x_n) \\
                               = {\small \det \begin{pmatrix}
                                     P_0(x_1) & P_1(x_1) & \cdots & P_{n-1}(x_1) \\
                                     P_0(x_2) & P_1(x_2) & \cdots & P_{n-1}(x_2) \\
                                     P_0(x_3) & P_1(x_3) & \cdots & P_{n-1}(x_3) \\
                                      \vdots & \vdots & \cdots & \vdots \\
                                     P_0(x_n) & P_1(x_n) & \cdots & P_{n-1}(x_n) 
                                 \end{pmatrix}
                                      \Gamma_n
                                     \begin{pmatrix}
                                     P_0(x_1) & P_0(x_2) & \cdots & P_0(x_n) \\
                                     P_1(x_1) & P_1(x_2) & \cdots & P_1(x_n) \\
                                     P_2(x_1) & P_2(x_2) & \cdots & P_2(x_n) \\
                                      \vdots & \vdots &  \cdots & \vdots \\
                                     P_{n-1}(x_1) & P_{n-1}(x_2) & \cdots & P_{n-1}(x_n) 
                                 \end{pmatrix},} 
\end{multline*}
where $\Gamma_n = \textup{diag}( \gamma_0^2, \gamma_1^2, \ldots, \gamma_{n-1}^2)$. Then use \eqref{gammaD} to find that $\prod_{j=0}^{n-1} \gamma_j^2 = 1/D_n$, so that
\[    \Delta_n^2(x_1,\ldots,x_n) = D_n \det \Bigl( \sum_{k=0}^{n-1} \gamma_k^2 P_k(x_i)P_k(x_j) \Bigr)_{i,j=1}^n,  \]
which combined with \eqref{Ppoint} gives the required result.
\end{proof}

For this reason we call the $n$-point process with density \eqref{Ppoint} the \textit{Christoffel-Darboux point process}.

\subsection{Determinantal point process}
The fact that the density $P(x_1,\ldots,x_n)$ can be written as a determinant of a kernel function $K(x,y)$ that satisfies
Property \ref{prop:CD} is important and allows to compute correlation functions for $k$ points $k \leq n$ of the point process,
in particular the probability density of one point (for $k=1$).

\begin{definition}
For $k \leq n$ the $k$th correlation function is 
\[   \rho_k(x_1,\dots,x_k) =\det \Bigl(  K_n(x_i,x_j) \Bigr)_{i,j=1}^k  .  \]
\end{definition}

The interpretation of these $k$th correlation functions is the following: if $A_i \cap A_j = \emptyset$ ($i \neq j$), and $N(A)$ is the number of points in $A$, then
\[   \int_{A_1} \int_{A_2} \cdots \int_{A_k} \rho_k(x_1,\ldots,x_k) \, d\mu(x_1)\cdots d\mu(x_k) =
   \mathbb{E}\left(\prod_{i=1}^k N(A_i) \right) .  \]
The $k$th correlation function can also be seen as the density of the marginal distribution of $k$ points in the $n$-point process, up to a normalization factor:

\begin{property}The $k$th correlation function is obtained from $P(x_1,\ldots,x_n)$ by
\begin{equation*}
    \rho_k(x_1,x_2,\ldots,x_k)     
= \frac{n!}{(n-k)!} \underbrace{\int_{-\infty}^\infty \cdots \int_{-\infty}^\infty}_{n-k} P(x_1,\ldots,x_n) \, d\mu(x_{k+1}) \cdots d\mu(x_n). 
 \end{equation*}
\end{property}

\begin{proof}
For $k=n-1$ we have, by expanding the determinant along the last row,
\begin{multline*}   
  \int_{-\infty}^\infty P(x_1,\ldots,x_n)\, d\mu(x_n) \\
  = \frac{1}{n!} \sum_{k=1}^{n-1} \int_{-\infty}^\infty (-1)^{n+k} K_n(x_n,x_k) \det \Bigl( K_n(x_i,x_j) \Bigr)_{1\leq i\neq n,j\neq k \leq n} \, d\mu(x_n) \\
   + \frac{1}{n!} \int_{-\infty}^\infty K_n(x_n,x_n) \det \Bigl( K_n(x_i,x_j) \Bigr)_{i,j=1}^{n-1} \, d\mu(x_n).  
\end{multline*}
By Property \ref{prop:CD} the last term is $1/(n-1)! \rho_{n-1}(x_1,\ldots,x_{n-1})$. Expanding the remaining determinant along the last column gives
\begin{multline*}
   \frac{1}{n!} \sum_{k=1}^{n-1} \sum_{\ell=1}^{n-1} (-1)^{n+k} (-1)^{n-1+\ell} \int_{-\infty}^\infty K_n(x_n,x_k) K_n(x_\ell,x_n) \\
  \times  \det \Bigr( K_n(x_i,x_j) \Bigr)_{1 \leq i\neq\ell ,j\neq k\leq n-1}\,
    d\mu(x_n) .  
\end{multline*}
The determinant does not contain $x_n$, so the remaining integration can be done using Property \ref{prop:CD} and gives
\[   \frac{1}{n!} \sum_{k=1}^{n-1} \sum_{\ell=1}^{n-1} (-1)^{k+\ell-1} K_n(x_\ell,x_k)  \det \Bigr( K_n(x_i,x_j) \Bigr)_{1 \leq i\neq\ell ,j\neq k\leq n-1}.  \]
The sum over $\ell$ gives the $(n-1)\times(n-1)$ determinant (recall that column $k$ which contains $K_n(x_i,x_k)$ is missing since $j\neq k$)
\[    (-1)^n \det \begin{pmatrix}  K_n(x_1,x_1) & K_n(x_1,x_2) & \cdots & K_n(x_1,x_{n-1}) & K_n(x_1,x_k) \\
                                   K_n(x_2,x_1) & K_n(x_2,x_2) & \cdots & K_n(x_2,x_{n-1}) & K_n(x_2,x_k) \\
                                        \vdots & \vdots & \cdots & \vdots & \vdots \\
                                   K_n(x_{n-1},x_1) & K_n(x_{n-1},x_2) & \cdots & K_n(x_{n-1},x_{n-1}) & K_n(x_{n-1},x_k)
                      \end{pmatrix}, \]
and to get the last column in the $k$th position, we need to interchange columns $n-1-k$ times, which gives          
\begin{multline*}   \int_{-\infty}^\infty P(x_1,\ldots,x_n)\, d\mu(x_n) \\
  = \frac{-1}{n!} \sum_{k=1}^{n-1} \rho_{n-1}(x_1,\dots,x_{n-1}) + \frac{1}{(n-1)!} \rho_{n-1}(x_1,\ldots,x_{n-1}),   
\end{multline*}
and hence
\[  \rho_{n-1}(x_1,\ldots,x_{n-1}) = n! \int_{-\infty}^\infty P(x_1,\ldots,x_n)\, d\mu(x_n).  \]
To prove the case for all $k=n-m$ one uses induction on $m$, for which we just proved the case $m=1$.
\end{proof}

\begin{definition}
A point process on $\mathbb{R}$ with correlation functions $\rho_k$ is a \textit{determinantal point process}
if there exists a kernel $K(x,y)$ such that for every $k$ and every $x_1,\ldots,x_k \in \mathbb{R}$
\[   \rho_k(x_1,x_2,\ldots,x_k) = \det \bigl( K(x_i,x_j)  \bigr)_{i,j=1}^k.  \]
\end{definition}

The following theorem shows that Property \ref{prop:CD} is indeed crucial. 

\begin{theorem}   \label{thm:detproc}
Suppose $K: \mathbb{R} \times \mathbb{R} \to \mathbb{R}$ is a kernel such that
\begin{itemize}
\item  $ \int_{-\infty}^\infty K(x,x)\, dx = n \in \mathbb{N}$,
\item For every $x_1,\ldots,x_n \in \mathbb{R}$, one has $\det \bigl( K(x_i,x_j) \bigr)_{i,j=1}^k \geq 0$.
\item $K(x,y) = \int_{-\infty}^\infty  K(x,s) K(s,y)\, ds $.
\end{itemize}
Then 
\[   P(x_1,\ldots,x_n) = \frac{1}{n!} \det \bigl(  K(x_i,x_j) \bigr)_{i,j=1}^n  \]
is a probability density on $\mathbb{R}^n$ which is invariant under permutations of coordinates. The associated $n$-point process is determinantal.
 \end{theorem}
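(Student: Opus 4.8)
The plan is to verify the three defining attributes of a probability density—nonnegativity, total mass one, invariance under permutations—and then to compute the correlation functions. Two of these are immediate. Nonnegativity of $P$ is the second hypothesis applied with $k=n$. Invariance under permutations is a determinant identity: for $\pi\in S_n$ the matrix $\bigl(K(x_{\pi(i)},x_{\pi(j)})\bigr)_{i,j=1}^n$ is the conjugate of $\bigl(K(x_i,x_j)\bigr)_{i,j=1}^n$ by the permutation matrix of $\pi$, so the two determinants coincide and $P(x_{\pi(1)},\dots,x_{\pi(n)})=P(x_1,\dots,x_n)$.

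The analytic heart of the proof is an \emph{integrating-out identity}: for every $1\le m\le n$ and all $x_1,\dots,x_{m-1}\in\mathbb{R}$,
\[
   \int_{-\infty}^\infty \det\bigl(K(x_i,x_j)\bigr)_{i,j=1}^m\,dx_m
   = (n-m+1)\,\det\bigl(K(x_i,x_j)\bigr)_{i,j=1}^{m-1}.
\]
I would prove this by expanding the left-hand determinant as $\sum_{\sigma\in S_m}\textup{sgn}(\sigma)\prod_{i=1}^m K(x_i,x_{\sigma(i)})$ and integrating term by term. For a $\sigma$ fixing $m$, the only factor containing $x_m$ is $K(x_m,x_m)$, whose integral is $n$ by the first hypothesis, and the remaining factors run over $S_{m-1}$ with unchanged signs; these terms sum to $n\det\bigl(K(x_i,x_j)\bigr)_{i,j=1}^{m-1}$. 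For a $\sigma$ with $\sigma(m)=\ell\neq m$ and $j:=\sigma^{-1}(m)$, the two factors containing $x_m$ are $K(x_j,x_m)$ and $K(x_m,x_\ell)$, and the third hypothesis gives $\int K(x_j,x_m)K(x_m,x_\ell)\,dx_m=K(x_j,x_\ell)$. Replacing those two factors by $K(x_j,x_\ell)$ corresponds to deleting the index $m$ from the cycle of $\sigma$ running through it, which produces a permutation $\widetilde\sigma\in S_{m-1}$ with $\textup{sgn}(\widetilde\sigma)=-\textup{sgn}(\sigma)$; since each $\widetilde\sigma$ arises from exactly $m-1$ such $\sigma$, one for each $j\in\{1,\dots,m-1\}$, these terms sum to $-(m-1)\det\bigl(K(x_i,x_j)\bigr)_{i,j=1}^{m-1}$. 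Adding the two contributions gives the identity. (This is the slick form of the cofactor computation already carried out in the proof of the preceding Property on correlation functions.)

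Iterating the identity to integrate out $x_n,x_{n-1},\dots,x_{k+1}$ in succession yields
\[
   \int_{\mathbb{R}^{n-k}} \det\bigl(K(x_i,x_j)\bigr)_{i,j=1}^n\,dx_{k+1}\cdots dx_n
   = (n-k)!\,\det\bigl(K(x_i,x_j)\bigr)_{i,j=1}^k .
\]
Taking $k=0$ gives $\int_{\mathbb{R}^n}P(x_1,\dots,x_n)\,dx_1\cdots dx_n = \frac{1}{n!}\cdot n! = 1$, so $P$ is a probability density, and by the invariance noted above it defines an $n$-point process invariant under permutations of coordinates. For that process the $k$-point correlation function is $\rho_k(x_1,\dots,x_k)=\frac{n!}{(n-k)!}\int_{\mathbb{R}^{n-k}}P(x_1,\dots,x_n)\,dx_{k+1}\cdots dx_n$—exactly the relation established just before the statement—so substituting the iterated identity gives $\rho_k(x_1,\dots,x_k)=\det\bigl(K(x_i,x_j)\bigr)_{i,j=1}^k$ for every $k\le n$. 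By the last Definition this says the process is determinantal with kernel $K$.

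The only genuinely delicate point is the sign bookkeeping in the integrating-out identity: one must check that splicing out the index $m$ from a cycle of $\sigma$ reverses the sign, and that the correspondence $\sigma\mapsto\widetilde\sigma$ is exactly $(m-1)$-to-one, since it is precisely this that turns the naive coefficient into $n-m+1$ and hence produces the factor $(n-k)!$ upon iteration. Beyond that, everything is a routine consequence of the three hypotheses—the abstract counterpart of Property \ref{prop:CD}—together with elementary determinant manipulations; one should also note that the hypotheses are tacitly assumed strong enough for all the iterated integrals to converge absolutely, so that Fubini's theorem applies at each step.
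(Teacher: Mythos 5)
Your proof is correct, and it is essentially the approach the paper intends: the theorem itself is stated without a printed proof, but the paper's proof of the preceding Property on correlation functions is exactly your integrating-out identity, carried out there by cofactor expansion along the last row and column for the Christoffel--Darboux kernel (with the general step left to induction), using the same two facts $\int K(x,x)\,dx=n$ and $\int K(x,s)K(s,y)\,ds=K(x,y)$. Your permutation-expansion form of the one-step lemma, with the sign reversal under splicing out the index and the $(m-1)$-to-one count made explicit, is just a cleaner uniform rendering of that same computation, and the rest (nonnegativity, symmetry, normalization, $\rho_k=\det(K(x_i,x_j))_{i,j=1}^k$) matches the paper's framework.
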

 
The most important example (at least in the context of this section) is when $d\mu(x)=w(x)\, dx$, and then one can take
 \[   K(x,y) = K_n(x,y) \sqrt{w(x)} \sqrt{w(y)}.  \]
 
\subsection{Random matrices}
To see the relation with random matrices, we claim that the eigenvalues of certain random matrices of order $n$ form a determinantal
point process with the Christoffel-Darboux kernel for a particular family of orthogonal polynomials. 
The \textit{Gaussian unitary ensemble} (GUE) consists of Hermitian random matrices $\mathbf{M}$ of order $n$ with random entries
\[   \mathbf{M}_{k,\ell} = X_{k,\ell} + i Y_{k,\ell}, \quad 
   \mathbf{M}_{\ell,k} = X_{k,\ell} - i Y_{k,\ell},  \qquad k < \ell, \]
\[     \mathbf{M}_{k,k} = X_{k,k}, \qquad 1 \leq k \leq n,  \]
where all $X_{k,\ell}, Y_{k,\ell}, X_{k,k}$ are independent normal random variables with mean zero and variance $\frac{1}{4n}$ (if $k < \ell$)
or $\frac{1}{2n}$ (if $k=\ell$).
The multivariate density is
\[    \frac{1}{Z_n} \prod_{k < \ell}  e^{-2n(x_{k,\ell}^2+y_{k,\ell}^2)} \prod_{k=1}^n e^{-nx_{k,k}^2} \, \prod_{k<\ell} dx_{k,\ell}dy_{k,\ell} \prod_{k=1}^n dx_{k,k}, \]
where $Z_n$ is normalizing constant. But this is also equal to
\[   \frac{1}{Z_n} \exp( -n \textup{Tr}\ M^2 ) \, dM  \]
where $M_{k,\ell} = (x_{k,\ell}+iy_{k,\ell})$ for $k < \ell$, $M_{k,k} = x_{k,k}$, and $M=M^*$.

We are mostly interested in the eigenvalues $\lambda_1,\ldots,\lambda_n$ of the random matrix $\mathbf{M}$.
To find the density of the eigenvalues, we use the change of variables: ${M} \mapsto (\Lambda,U)$, where $U$ is a unitary matrix for which
\[     {M} = U \Lambda U^*, \]
and $\Lambda= \textup{diag} (\lambda_1,\ldots,\lambda_n)$, and then integrate over the unitary part $U$, which leaves only the eigenvalues. 
This change of variables is done using the Weyl integration formula (see, e.g., \cite[\S 4.1.3]{AGZ}):

\begin{theorem}[Weyl integration formula]
For the change of variables $M = U \Lambda U^*$ one has
\[    dM = c_n \prod_{i<j} (\lambda_i-\lambda_j)^2 \, d\lambda_1\cdots d\lambda_n \, dU,  \]
where $c_n$ is a constant and $dU$ is the Haar measure on the unitary group.
\end{theorem}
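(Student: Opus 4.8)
The plan is to treat the map $\Phi\colon (U,\Lambda)\mapsto U\Lambda U^*$ as a change of variables on the open dense subset of the space $\mathrm{Herm}(n)$ of $n\times n$ Hermitian matrices with $n$ distinct eigenvalues, and to compute its Jacobian. On that subset we may order the eigenvalues $\lambda_1<\cdots<\lambda_n$, which makes $\Lambda$ unique; then $U$ is determined only up to right multiplication by a diagonal unitary matrix, so the honest domain of $\Phi$ is $(U(n)/T^n)\times\{\lambda_1<\cdots<\lambda_n\}$, a manifold of dimension $n^2=\dim\mathrm{Herm}(n)$. The first step is to observe that both sides of the claimed identity are invariant under $M\mapsto VMV^*$ for fixed $V\in U(n)$: the left side because Lebesgue measure on $\mathrm{Herm}(n)$ is conjugation invariant, and the right side because $dU$ (Haar measure) is left invariant while the eigenvalues are unchanged. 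Hence the Jacobian of $\Phi$ is constant along each orbit, so it suffices to compute it at a single representative in each fibre, which we take to be $U=I$, $M=\Lambda$.

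Next I would linearize $\Phi$ at $(I,\Lambda)$. Representing a tangent vector to $U(n)$ at $I$ by a skew-Hermitian matrix $A$ (so $A^*=-A$) and a tangent vector in the diagonal direction by a real diagonal matrix $D$, a short computation gives
\[
\left.\frac{d}{dt}\right|_{t=0} e^{tA}(\Lambda+tD)e^{-tA} = D + (A\Lambda-\Lambda A).
\]
The commutator has vanishing diagonal and off-diagonal entries $(A\Lambda-\Lambda A)_{ij}=(\lambda_j-\lambda_i)A_{ij}$. Thus in the coordinates $M_{ii}=x_{ii}$, $M_{ij}=x_{ij}+iy_{ij}$ ($i<j$) on $\mathrm{Herm}(n)$, the differential of $\Phi$ is block diagonal: the diagonal block sends $D$ to $dx_{11},\dots,dx_{nn}$ with Jacobian $1$, while for each pair $i<j$ the real and imaginary parts of $A_{ij}$ are each scaled by $\lambda_j-\lambda_i$. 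The diagonal entries of $A$, which span the torus direction, lie in the kernel, consistent with having quotiented by $T^n$. Therefore the Jacobian determinant equals $\prod_{i<j}(\lambda_j-\lambda_i)^2$ times a constant Jacobian relating the chosen exponential coordinates on $U(n)/T^n$ near $I$ to the Haar measure $dU$; this constant does not depend on $\Lambda$ and is absorbed into $c_n$.

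Finally I would assemble the pieces: combining the invariance reduction with the pointwise Jacobian yields $dM = c_n\prod_{i<j}(\lambda_i-\lambda_j)^2\,d\lambda_1\cdots d\lambda_n\,dU$ on the ordered chamber, and since the left side is symmetric one obtains the stated formula globally (the combinatorial factor $n!$ coming from unordering the eigenvalues being again folded into the normalization). The step deserving the most care — and the main obstacle — is the reduction to the computation at $U=I$: one must verify that $\Phi$ really is a local diffeomorphism off the discriminant locus so that the Jacobian is well defined, that the local exponential charts patch into a genuine left Haar measure on $U(n)/T^n$, and that the torus fibre contributes only a multiplicative constant. Keeping track of these normalizations is precisely why the theorem is stated with an unspecified constant $c_n$ rather than an explicit one.
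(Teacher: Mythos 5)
Your argument is correct, but it is not the route the paper takes. In fact the paper never proves the stated change-of-variables form directly (it refers to Anderson--Guionnet--Zeitouni for that); what it does prove, in the sketch at the end, is the class-function version of Theorem \ref{thm:WeylC}, and it does so by a completely different mechanism: an induction on $n$ via the ``border transform'' $B_{n+1}$, i.e.\ bordering a diagonal matrix $\textup{diag}(\lambda_1,\ldots,\lambda_n)$ by one extra row and column, exploiting the interlacing $\mu_{j-1}<\lambda_j<\mu_j$ of old and new eigenvalues, computing the Jacobian of $(s_0,s_1,\ldots,s_n)\mapsto(\mu_0,\ldots,\mu_n)$ as $\pm\Delta_{n+1}(\mu)/\Delta_n(\lambda)$, and integrating $\Delta_n(\lambda)$ over the interlacing region to produce $\Delta_{n+1}(\mu)/n!$. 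Your proof is the standard Lie-theoretic one: reduce by conjugation invariance to the point $U=I$, linearize $(A,D)\mapsto D+[A,\Lambda]$, and read off the factor $(\lambda_j-\lambda_i)$ on the real and imaginary parts of each off-diagonal entry, giving the squared Vandermonde, with the torus directions in the kernel accounting for the quotient by $T^n$. What each approach buys: yours yields the genuine measure decomposition $dM=c_n\Delta_n^2(\lambda)\,d\lambda\,dU$ (the form actually stated), is shorter, and isolates where the Vandermonde squared comes from, at the price of the differential-geometric bookkeeping you flag yourself (local diffeomorphism off the discriminant, matching exponential charts to Haar measure, the torus fibre and the $n!$ being constants); the paper's bordering argument avoids Haar measure and quotient manifolds entirely, works with elementary multivariable calculus plus interlacing, produces the explicit constant $c_{n+1}=c_n\pi^n/(n+1)!$ along the way, but only delivers the integrated identity for class functions rather than the pointwise change of variables. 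Your one computational core, $(A\Lambda-\Lambda A)_{ij}=(\lambda_j-\lambda_i)A_{ij}$ with vanishing diagonal, is correct, and the invariance reduction to $U=I$ is legitimate since Lebesgue measure on Hermitian matrices and Haar measure are both conjugation/left invariant; as a sketch it is at a comparable level of rigor to the paper's own sketch.
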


We will use a simplified version of this result, for which one does not need the Haar measure on the unitary group.
This works when the expression $f(M)$ that we want to integrate only depends on the eigenvalues of $M$.
Let $\mathcal{H}_n$ be the Hermitian matrices of order $n$.
\begin{definition}
A function $f: \mathcal{H}_n \to \mathbb{C}$  is a \textit{class function} if
\[    f(UMU^*) = f(M) \]
for all unitary matrices $U$.
\end{definition}

\begin{theorem}[Weyl integration formula for class functions]     \label{thm:WeylC}
For an integrable class function $f$ we have
\[  \int f(M)\, dM = c_n  \int_{\mathbb{R}^n} f(\lambda_1,\ldots,\lambda_n) \prod_{i<j} (\lambda_i-\lambda_j)^2
\, d\lambda_1 \cdots d\lambda_n, \]
with
\[   c_n = \frac{\pi^{n(n-1)/2}}{\prod_{j=1}^n j! } .  \]
\end{theorem}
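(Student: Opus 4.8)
The plan is to read the statement off the Weyl integration formula of the preceding theorem. That formula says that under $M = U\Lambda U^*$ the Lebesgue measure transforms as $dM = c_n\prod_{i<j}(\lambda_i-\lambda_j)^2\,d\lambda_1\cdots d\lambda_n\,dU$, with $dU$ the Haar measure on the unitary group. When $f$ is a class function, $f(M) = f(U\Lambda U^*) = f(\Lambda)$ depends only on the eigenvalues, so after substitution the integration in $U$ separates off and merely multiplies by a constant. Thus, for every integrable class function,
\[ \int f(M)\,dM = c_n \int_{\mathbb{R}^n} f(\lambda_1,\ldots,\lambda_n)\prod_{i<j}(\lambda_i-\lambda_j)^2\,d\lambda_1\cdots d\lambda_n \]
for a single constant $c_n$ (into which $\int dU$ has been absorbed), and it only remains to evaluate $c_n$.

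To pin down $c_n$ I would test the identity on the Gaussian $f(M) = e^{-\operatorname{Tr} M^2}$, which is a class function because the trace is conjugation invariant, and compute each side separately. On the left, using the coordinates of \S\ref{sec:RM} one has $\operatorname{Tr} M^2 = \sum_k x_{k,k}^2 + 2\sum_{k<\ell}(x_{k,\ell}^2+y_{k,\ell}^2)$, so the integral over $\mathcal H_n$ factors into $n$ copies of $\int_{\mathbb R}e^{-t^2}\,dt = \sqrt\pi$ and $n(n-1)$ copies of $\int_{\mathbb R}e^{-2t^2}\,dt = \sqrt{\pi/2}$, which gives $\pi^{n/2}(\pi/2)^{n(n-1)/2} = \pi^{n^2/2}\,2^{-n(n-1)/2}$.

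On the right, $\int_{\mathbb R^n} e^{-\sum_i\lambda_i^2}\prod_{i<j}(\lambda_i-\lambda_j)^2\,d\lambda_1\cdots d\lambda_n$ is, by Heine's formula (Property~\ref{prop:2.1}), equal to $n!$ times the Hankel determinant $D_n$ of the weight $d\mu(x) = e^{-x^2}\,dx$. One evaluates $D_n$ telescopically from \eqref{gammaD}: the monic Hermite polynomials $2^{-n}H_n$ satisfy $\int_{\mathbb R}(2^{-n}H_n)^2 e^{-x^2}\,dx = 2^{-n}n!\sqrt\pi = D_{n+1}/D_n$, whence $D_n = \prod_{j=0}^{n-1}\bigl(2^{-j}j!\sqrt\pi\bigr) = \pi^{n/2}\,2^{-n(n-1)/2}\prod_{j=0}^{n-1}j!$, so the right-hand side of the displayed identity equals $c_n\,n!\,D_n = c_n\,\pi^{n/2}\,2^{-n(n-1)/2}\prod_{j=1}^n j!$. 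Comparing with the left-hand value and cancelling $2^{-n(n-1)/2}$ forces $c_n = \pi^{n(n-1)/2}/\prod_{j=1}^n j!$, as claimed.

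As for the difficulty: once the Weyl integration formula is granted the argument is almost entirely formal, and all of the content sits in the constant $c_n$; there the only non-routine ingredient is the Selberg/Mehta-type evaluation of $\int_{\mathbb R^n}\Delta_n^2(\lambda)e^{-\sum\lambda_i^2}\,d\lambda$, which we obtain cheaply via Heine's formula together with the classical normalization of Hermite polynomials, both already available. If instead one wished to avoid the full Weyl formula, the hard step would be computing the Jacobian of $M\mapsto(\Lambda,U)$ directly: linearizing $M = U\Lambda U^*$ about a diagonal $\Lambda$ by $U = \exp(\varepsilon A)$ with $A$ skew-Hermitian gives $\delta M = [A,\Lambda] + \delta\Lambda$, whose off-diagonal $(i,j)$ entry is $(\lambda_j-\lambda_i)A_{ij}$, so the change of variables on $\mathcal H_n$ produces exactly the weight $\prod_{i<j}(\lambda_i-\lambda_j)^2$ while the surviving coordinates parametrize the unitary part and integrate to a constant — and that bookkeeping is where the real work would lie.
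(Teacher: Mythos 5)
Your argument is correct, but it follows a genuinely different route from the one the paper takes. You grant the general Weyl integration formula $dM = c\,\prod_{i<j}(\lambda_i-\lambda_j)^2\,d\lambda\,dU$ (stated in the paper without proof, with an unspecified constant), observe that for a class function the $U$-integration only contributes a constant, and then pin down the constant by evaluating both sides on the Gaussian $f(M)=e^{-\operatorname{Tr}M^2}$: the matrix side factorizes into one-dimensional Gaussians giving $\pi^{n^2/2}2^{-n(n-1)/2}$, while the eigenvalue side is $c_n\,n!\,D_n$ with $D_n$ the Hankel determinant of $e^{-x^2}\,dx$, computed telescopically from \eqref{gammaD} and the monic Hermite norms $2^{-j}j!\sqrt{\pi}$; your bookkeeping here is accurate and forces $c_n=\pi^{n(n-1)/2}/\prod_{j=1}^n j!$. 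The paper instead gives a self-contained proof by induction on $n$ using a \emph{border transform}: it integrates out the last row and column of an $(n+1)\times(n+1)$ Hermitian matrix, passes from the bordering variables $(s_0,s_1,\ldots,s_n)$ to the eigenvalues $\mu$ of the bordered matrix (which interlace with $\lambda$), uses the Jacobian $\pm\Delta_{n+1}(\mu)/\Delta_n(\lambda)$ and the identity $\int_{D^*(\mu)}\Delta_n(\lambda)\,d\lambda = \Delta_{n+1}(\mu)/n!$, and arrives at the recursion $c_{n+1}=c_n\pi^n/(n+1)!$, which yields the constant. The trade-off: the paper's bordering induction never invokes the unitary group or the Haar measure and establishes both the Vandermonde-squared density and the constant from scratch, whereas your proof is much shorter but defers all the analytic content of the Jacobian to the previously stated (and in these notes unproved) Weyl theorem, contributing only the normalization — which, to be fair, is exactly the part the statement singles out, and your Heine-plus-Hermite evaluation of it is a clean and classical way to get it.
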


The characteristic polynomial of a matrix $M$ only depends on the eigenvalues, hence $\det(xI-M)$ is a class function.
For random matrices in GUE one finds for the average characteristic function
\begin{equation} \label{avcharpol}
  \mathbb{E} \det ( x I-\mathbf{M}) 
  = \frac{1}{D_n} \int_{\mathbb{R}^n} \prod_{i=1}^n (x-x_i)
      \ \Delta_n^2(x_1,\ldots,x_n) e^{-n(x_1^2+\cdots+x_n^2)}\, dx_1\cdots dx_n 
\end{equation}
and by \eqref{HeineP} this is the monic \textit{Hermite polynomial} $H_n(\sqrt{n}x)$.
More generally, \textbf{the eigenvalues of a random matrix in GUE form a determinantal point process with the Christoffel-Darboux kernel
of (scaled) Hermite polynomials}.
The average number of eigenvalues of $\mathbf{M}$ in $[a,b]$ is in terms of the correlation function $\rho_1(x)$:
\[   \mathbb{E}\bigl(N([a,b])\bigr) =  \int_a^b K_n(x,x)\, e^{-nx^2}\, dx.   \]

\subsection{Random matrix ensembles}
Here we give a few more random matrix ensembles for which the eigenvalues form a determinantal point process with the Christoffel-Darboux kernel of classical orthogonal polynomials.
 \begin{itemize}
 \item We already defined GUE (Gaussian Unitary Ensemble): this contains random matrices in $\mathcal{H}_n$ with density
 \[     \frac{1}{Z_n}  \exp(-n \textup{Tr }M^2)\, dM.   \] 
The average characteristic polynomial is
\[    \mathbb{E}\det (x I - \mathbf{M}) = \textrm{(scaled) Hermite polynomial} . \]
This suggests that on the average the eigenvalues behave like the zeros of (scaled) Hermite polynomials. This is indeed true, but for this one needs the correlation function
$\rho_1$ and the result that
\[     \lim_{n \to \infty} \frac{1}{n} \int_a^b f(x) K_n(x,x) e^{-nx^2} \, dx = \lim_{n \to \infty} \frac{1}{n} \sum_{j=1}^n f(x_{j,n}/\sqrt{n}), \]
where $x_{1,n},\ldots,x_{n,n}$ are the zeros of the Hermite polynomial $H_n$. 
\item The Wishart ensemble. Let $\mathbf{M}$ be a $n \times m$ matrix $(m \geq n)$ with independent complex Gaussian entries $X_{k,\ell} + i Y_{k,\ell}$. 
Then $\mathbf{M}\mathbf{M}^*$ has the Wishart distribution with density
\[ \frac{1}{C_n}  |\det W|^{m-n} \exp ( - \textrm{Tr } W  ).  \]
The average characteristic polynomial is
\[    \mathbb{E} \det ( x I - \mathbf{M}\mathbf{M}^*) = \textrm{ Laguerre polynomial with } \alpha=m-n.   \]
Observe that $\mathbf{M}\mathbf{M}^*$ is a positive definite matrix so that all the eigenvalues are positive. On the average they behave like the zeros of
Laguerre polynomials.
\item Truncated unitary matrices. Let $U$ be a random unitary matrix of order $(m+k)\times(m+k)$ and let $\mathbf{V}$ be the $m\times n$ upper left
 corner $(m \geq n)$.
Then $\mathbf{V}^*\mathbf{V}$ is an $n\times n$ matrix and
\[ \quad  \mathbb{E} \det (x I - \mathbf{V}^*\mathbf{V}) = \textrm{Jacobi polynomial on } [0,1], \quad \alpha=m-n,\beta=k-n. \]
Unitary matrices have their eigenvalues on the unit circle, and a truncated unitary matrix has its singular values (the eigenvalues of $\mathbf{V}^*\mathbf{V}$) in $[0,1]$.
These eigenvalues behave on the average like the zeros of Jacobi polynomials.
\end{itemize}
\medskip

\noindent\shadowbox{\parbox{12cm}{
\begin{exercise}
Let $\mathbf{M}_n$ be the Hermitian random matrix with entries
\[   (\mathbf{M}_n)_{k,\ell} = \begin{cases}  X_{k,\ell} + i Y_{k,\ell}, & k < \ell, \\
                                              X_{\ell,k} - i Y_{\ell,k}, & k > \ell, \\
                                              X_{k,k}, & k=\ell,
                               \end{cases}  \]
where $X_{k,\ell}, Y_{k,\ell}$ $(k < \ell)$ and $X_{k,k}$ $(1 \leq k \leq n)$ are independent random variables with means
$\mathbb{E}(X_{k,\ell}) = \mathbb{E}(Y_{k,\ell}) = \mathbb{E}(X_{k,k}) = 0$ and variances $\mathbb{E}(X_{k,\ell}^2)=\mathbb{E}(Y_{k,\ell}^2)=\mathbb{E}(X_{k,k}^2)= \sigma^2 >0$. Show that $P_n(x) = \mathbb{E} \det (xI_n - \mathbf{M}_n)$ satisfies the three-term recurrence relation
\[  P_n(x) = xP_{n-1}(x) - 2(n-1)\sigma^2 P_{n-2}(x), \]
with $P_0(x)=1$ and $P_1(x)=x$. Identify this $P_n(x)$ as $\sigma^n H_n(x/2\sigma)$, where $H_n$ is the Hermite polynomial of degree $n$.
This shows that the Hermite polynomial is the average characteristic polynomial of a large class of Hermitian random matrices, not only GUE.      
\end{exercise}}}

So far we found that on the average the eigenvalues of random matrices from these ensembles behave like zeros of orthogonal polynomials.
To get more information about individual eigenvalues, for example the largest eigenvalue or the smallest eigenvalue, one needs a more detailed analysis of the point process.
In particular one needs to investigate the asymptotic behavior of the Christoffel-Darboux kernels. In particular, to understand the spacing between the eigenvalues
in the neighborhood of $x^*$ in the bulk of the spectrum, one needs results for
\[   \lim_{n \to \infty} \frac{1}{n} K_n\bigl(x^*+\frac{u}{n},x^*+\frac{v}{n}\bigr), \]
or, when $x^*$ is at the end of the spectrum,   
\[   \lim_{n\to \infty} \frac{1}{n^\gamma} K_n\bigl(x^*+\frac{u}{n^\gamma},x^*+\frac{v}{n^\gamma}\bigr),  \]
where $\gamma$ depends on the nature of the endpoint (hard or soft edge).
This will give kernels of well-known point processes. 

An important quantity of interest is the probability $p_A(m)$ that there are exactly $m$ eigenvalues in the set $A \subset \mathbb{R}$.
If there are $m$  eigenvalues in $A$, then the number of ordered $k$-tuples in $A$ is $\binom{m}{k}$ and thus
\[   \sum_{m=k}^\infty \binom{m}{k} p_A(m) = \frac{1}{k!} \int_{A^k} \rho_k(x_1,\ldots,x_k)\, d\mu(x_1)\cdots d\mu(x_k), \qquad k \geq 1, \]
because this is the expected number of ordered $k$-tuples in $A$. For $k=0$ one has
\[   \sum_{m=0}^\infty p_A(m) = 1, \]
therefore
\[  1 + \sum_{k=1}^\infty \frac{(-1)^k}{k!} \int_{A^k} \rho_k(x_1,\ldots,x_k)\, d\mu(x_1)\cdots d\mu(x_k) =
      \sum_{k=0}^\infty \sum_{m=k}^\infty (-1)^k \binom{m}{k} p_A(m) .  \]
Changing the order of summation (we assume that this is allowed) and using
\[     \sum_{k=0}^m (-1)^k \binom{m}{k} = \delta_{m,0}, \]
we find that
\[    p_A(0) = 1 + \sum_{k=1}^\infty \frac{(-1)^k}{k!} \int_{A^k} \rho_k(x_1,\ldots,x_k)\, d\mu(x_1)\cdots d\mu(x_k).  \]
This is the so-called \textit{gap probability}: the probability to find no eigenvalues in $A$.
For a determinantal point process, such as the eigenvalues of various random matrices, this gap probability is in fact the
Fredholm determinant $\det ( I - K_A)$ of the operator $K_A: L^2(A)\to L^2(A)$ defined by
\[    K_Af(x) = \int_A K_n(x,y) f(y)\, d\mu(y), \qquad x \in A. \]
The asymptotic behavior as the size $n$ of the random matrices increases to infinity, then gives the Fredholm determinant $\det (I-K_A)$ of the 
operator $K_A$ that uses the kernel $K(x,y)$ which is the limit of the Christoffel-Darboux kernel $K_n(x,y)$ as described above.
The lesson to be learned from this is that the asymptotic behavior of orthogonal polynomials and their
Christoffel-Darboux kernel gives important insight in the behavior of eigenvalues of random matrices.

\section{Multiple orthogonal polynomials}

In this section we will explain the notion of multiple orthogonal polynomials. Useful references are Ismail's book \cite[Ch.~23]{Ismail}, Nikishin and Sorokin's book \cite[Ch.~4]{NikiSor}
and the papers \cite{Apt,MarWVA,RHP}.
Instead of orthogonality conditions with respect to one measure on the real line,
the orthogonality will be with respect to $r$ measures, where $r \geq 1$. For $r=1$ one has the usual orthogonal polynomials, but for $r\geq 2$ one gets
two types of multiple orthogonal polynomials. 
 
Let $r \in \mathbb{N}$ and let $\mu_1,\ldots,\mu_r$ be positive measures on the real line, for which all the moments exist. 
We use \textit{multi-indices} $\vec{n} = (n_1,n_2,\ldots,n_r) \in \mathbb{N}^r$ and denote their \textit{length} by
$|\vec{n}| = n_1+n_2+\cdots+n_r$. 

\begin{definition}[type I]
Type I multiple orthogonal polynomials for $\vec{n}$ consist of the vector $(A_{\vec{n},1},\ldots,A_{\vec{n},r})$
of $r$ polynomials, with $\deg A_{\vec{n},j} \leq n_j-1$, for which
\[   \int x^k \sum_{j=1}^r A_{\vec{n},j}(x) \, d\mu_j(x) = 0, \qquad 0 \leq k \leq |\vec{n}|-2, \]
with normalization
\[  \int x^{|\vec{n}|-1} \sum_{j=1}^r A_{\vec{n},j}(x) \, d\mu_j(x) = 1.  \]
\end{definition}

\begin{definition}[type II]
The type II multiple orthogonal polynomial for $\vec{n}$ is the \textbf{monic} polynomial $P_{\vec{n}}$ of degree $|\vec{n}|$ for which
\[   \int x^k P_{\vec{n}}(x)\, d\mu_j(x) = 0, \qquad 0 \leq k \leq n_j-1, \]
for $1 \leq j \leq r$.
\end{definition}

The conditions for type I and type II multiple orthogonal polynomials give a system of $|\vec{n}|$ linear equations for the $|\vec{n}|$ unknown coefficients
of the polynomials. This system may not have a solution, or when a solution exists it may not be unique. 
A multi-index $\vec{n}$ is said to be \textit{normal} if the type I vector $(A_{\vec{n},1}, \ldots A_{\vec{n},r})$ exists and is unique,
and this is equivalent with the existence and uniqueness of the monic
type II multiple orthogonal polynomial $P_{\vec{n}}$, because the matrix of the linear system for type II is the transpose of the matrix for the type I linear system.
Hence $\vec{n}$ is a normal multi-index if and only if
\[ \small   \det \begin{pmatrix}  M_{n_1}^{(1)} \\ M_{n_2}^{(2)} \\ \vdots \\ M_{n_r}^{(r)} \end{pmatrix} \neq 0, \]
where
\[         M_{n_j}^{(j)} =  \begin{pmatrix}
               m_0^{(j)} & m_1^{(j)} & \cdots & m_{|\vec{n}|-1}^{(j)} \\
                m_1^{(j)} & m_2^{(j)} & \cdots & m_{|\vec{n}|}^{(j)} \\
               \vdots & \vdots & \cdots & \vdots \\
                m_{n_j-1}^{(j)} & m_{n_j}^{(j)} & \cdots & m_{|\vec{n}|+n_j-2}^{(j)} 
             \end{pmatrix}   \]
are rectangular Hankel matrices containing the moments
\[     m_k^{(j)} = \int x^k\, d\mu_j(x).   \]

\subsection{Special systems}
Interesting systems of measures $(\mu_1,\ldots,\mu_r)$ are those for which all the multi-indices are normal. We call such systems \textit{perfect}.
Here we will describe two such systems.

\begin{definition}[Angelesco system]
The measures $(\mu_1,\ldots,\mu_r)$ are an \textit{Angelesco system} if the supports of the measures 
are subsets of disjoint intervals $\Delta_j$, i.e., $\textup{supp}(\mu_j) \subset \Delta_j$ and
$\Delta_i \cap \Delta_j = \emptyset$ whenever $i \neq j$.
\end{definition}

\noindent Usually one allows that the intervals are touching, i.e., $\stackrel{\circ}{\Delta_i} \cap \stackrel{\circ}{\Delta_j} = \emptyset$ whenever 
$i \neq j$.

\begin{theorem}[Angelesco, Nikishin]
The type II multiple orthogonal polynomial $P_{\vec{n}}$ for an Angelesco system has exactly $n_j$ distinct zeros on $\stackrel{\circ}{\Delta_j}$ for $1 \leq j \leq r$.
\end{theorem}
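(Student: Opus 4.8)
The plan is the classical sign-change counting argument, with the disjointness of the intervals $\Delta_j$ doing the essential work at the end. Rather than assume $P_{\vec{n}}$ exists, I would begin from \emph{any} nonzero polynomial $Q$ of degree $\leq |\vec{n}|$ satisfying the full set of type II conditions $\int x^k Q(x)\, d\mu_j(x) = 0$ for $0 \leq k \leq n_j-1$ and $1 \leq j \leq r$; such a $Q$ exists because these are $|\vec{n}|$ homogeneous linear equations in the $|\vec{n}|+1$ coefficients of $Q$. The zero distribution will then come out together with existence, uniqueness, and normality of $\vec{n}$.

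The first real step is to show that for each $j$ the number $\nu_j$ of points of $\stackrel{\circ}{\Delta_j}$ at which $Q$ changes sign (equivalently, the zeros of $Q$ in $\stackrel{\circ}{\Delta_j}$ of odd multiplicity) satisfies $\nu_j \geq n_j$. Suppose not, so $\nu_j \leq n_j-1$; let $y_1,\ldots,y_{\nu_j}$ be those sign-change points and put $q_j(x) = \prod_{i=1}^{\nu_j}(x-y_i)$, a polynomial of degree $\nu_j \leq n_j-1$ (the empty product being $1$). By construction $Q(x)q_j(x)$ has constant sign on $\Delta_j$: inside $\stackrel{\circ}{\Delta_j}$ the only sign changes of $Q$ occur at the $y_i$, and these are cancelled by the simple zeros of $q_j$, while at the endpoints one extends by continuity; hence $Qq_j$ is one-signed on $\textup{supp}(\mu_j) \subset \Delta_j$ and vanishes there only on a finite set. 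Since $\mu_j$ has infinite support (standing assumption), $\int Q(x)q_j(x)\, d\mu_j(x) \neq 0$. But $\deg q_j \leq n_j-1$, so the orthogonality conditions for $\mu_j$ force this integral to be $0$, a contradiction. Thus $\nu_j \geq n_j$ for every $j$.

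Next I would combine the bounds. The interiors $\stackrel{\circ}{\Delta_j}$ are pairwise disjoint (this is exactly the Angelesco hypothesis, including the touching case), so the sign-change points of $Q$ lying in different $\stackrel{\circ}{\Delta_j}$ are distinct real zeros of $Q$; hence $Q$ has at least $\sum_{j=1}^r \nu_j \geq \sum_{j=1}^r n_j = |\vec{n}|$ distinct real zeros. Since $\deg Q \leq |\vec{n}|$, every inequality must be an equality: $\deg Q = |\vec{n}|$, all zeros of $Q$ are simple, $\nu_j = n_j$ for each $j$, and all zeros lie in $\bigcup_j \stackrel{\circ}{\Delta_j}$ with exactly $n_j$ of them in $\stackrel{\circ}{\Delta_j}$. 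Normalizing $Q$ to be monic gives $P_{\vec{n}}$ with the claimed zero distribution; uniqueness of $P_{\vec{n}}$ (hence normality of $\vec{n}$) follows because a solution space of dimension $\geq 2$ would contain a nonzero solution of degree $< |\vec{n}|$, contradicting what was just shown.

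The one delicate point — the main obstacle — is the step $\nu_j \geq n_j$: one must check carefully that $q_j$ really neutralizes \emph{all} sign changes of $Q$ on $\Delta_j$ (so that zeros of even multiplicity and possible atoms of $\mu_j$ at the endpoints cause no trouble) and that the resulting one-signed integral is strictly nonzero, which is where the infiniteness of $\textup{supp}(\mu_j)$ enters. Everything after that is bookkeeping, with disjointness of the $\Delta_j$ converting the local estimates $\nu_j \geq n_j$ into the exact global count.
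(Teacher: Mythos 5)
Your argument is correct: it is the classical sign-change counting proof, and the one delicate step you flag (that $q_j$ neutralizes all interior sign changes of $Q$ on $\Delta_j$ and that the one-signed integral is strictly nonzero because $\operatorname{supp}(\mu_j)$ is infinite) is handled properly. The paper states this theorem without proof, so there is no in-text argument to compare against; note, though, that your formulation (starting from an arbitrary nonzero solution $Q$ of the homogeneous system) does slightly more than the statement asks, since it simultaneously yields existence, uniqueness and normality of $\vec{n}$ — i.e.\ the corollary that an Angelesco system is perfect, which the paper lists separately — as well as the factorization $P_{\vec{n}} = \prod_{j=1}^r p_{\vec{n},j}$ with each $p_{\vec{n},j}$ having its $n_j$ simple zeros in $\stackrel{\circ}{\Delta_j}$, which the paper records as a remark after the theorem.
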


This means that the type II multiple orthogonal polynomial $P_{\vec{n}}$ can be factored as $P_{\vec{n}}(x) = \prod_{j=1}^r p_{\vec{n},j}(x)$, where $p_{\vec{n},j}$ has all its zeros
on $\Delta_j$. In fact, $p_{\vec{n},j}$ is an ordinary orthogonal polynomial of degree $n_j$ on the interval $\Delta_j$ for the measure
$\prod_{i\neq j} p_{\vec{n},i}(x)\ d\mu_j(x)$:
\[     \int_{\Delta_j} x^k  p_{\vec{n},j}(x) \ \prod_{i\neq j} p_{\vec{n},i}\ d\mu(x) = 0, \qquad  0 \leq k \leq n_j-1.  \]
Observe that for $i \neq j$ the polynomial $p_{\vec{n},i}(x)$ has constant sign on $\Delta_j$.

\begin{corollary}
Every multi-index $\vec{n}$ is normal (an Angelesco system is perfect).
\end{corollary}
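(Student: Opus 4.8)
The plan is to prove normality directly from the linear-algebra criterion stated just above: $\vec n$ is normal precisely when the $|\vec n|\times|\vec n|$ matrix of the type II system (equivalently its transpose, the type I matrix) is invertible, i.e.\ when the corresponding homogeneous system has only the trivial solution. Concretely, I would show that the only polynomial $R$ with $\deg R\le |\vec n|-1$ satisfying
\[ \int x^k R(x)\, d\mu_j(x) = 0, \qquad 0\le k\le n_j-1,\quad 1\le j\le r, \]
is $R\equiv 0$; invertibility of the matrix then yields existence and uniqueness of the monic type II polynomial $P_{\vec n}$, hence normality of $\vec n$, and simultaneously of the type I vector.

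First I would run, on each interval $\Delta_j$ separately, the classical sign-change argument for orthogonal polynomials. Suppose $R\not\equiv 0$; since each $\textup{supp}(\mu_j)$ is infinite, a nonzero polynomial cannot vanish $\mu_j$-a.e., so $R$ is $\mu_j$-nonnull for every $j$. On $\stackrel{\circ}{\Delta_j}$ let $y_1<\cdots<y_m$ be the points at which $R$ changes sign, and suppose $m<n_j$. Put $q(x)=\prod_{i=1}^m(x-y_i)$ (with $q\equiv 1$ if $m=0$), a polynomial of degree $m\le n_j-1$. Then $R(x)q(x)$ has constant sign on $\Delta_j$ and is not $\mu_j$-null, so $\int Rq\, d\mu_j\ne 0$; but $\deg q\le n_j-1$ forces $\int Rq\, d\mu_j=0$ by the orthogonality conditions, a contradiction. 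Hence $R$ has at least $n_j$ sign changes, i.e.\ at least $n_j$ zeros counted with multiplicity, inside $\stackrel{\circ}{\Delta_j}$.

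Next I would combine these counts over $j=1,\dots,r$. Since the intervals $\Delta_j$ have pairwise disjoint interiors, the zero sets produced above are disjoint, so $R$ has at least $\sum_{j=1}^r n_j=|\vec n|$ zeros counted with multiplicity. This contradicts $\deg R\le|\vec n|-1$, so $R\equiv 0$. Therefore the homogeneous system has only the trivial solution, the type II matrix is invertible, $P_{\vec n}$ exists and is unique, and $\vec n$ is normal; since $\vec n$ was arbitrary, the Angelesco system is perfect.

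The argument is essentially routine; the only points needing care are the standing assumption that each $\mu_j$ has infinite support (so a nonzero polynomial is $\mu_j$-nonnull, which both rules out $R$ vanishing identically on some $\Delta_j$ without vanishing everywhere and validates the strict inequality $\int Rq\, d\mu_j\ne 0$), and the bookkeeping of zeros with odd multiplicity at the sign changes. One could alternatively try to deduce everything from the Angelesco--Nikishin theorem together with the factorization $P_{\vec n}=\prod_{j=1}^r p_{\vec n,j}$ recorded above, but that theorem already presupposes the existence of $P_{\vec n}$, so the self-contained route through the homogeneous system is preferable.
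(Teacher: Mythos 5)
Your argument is correct. Note that the paper itself gives no written proof of this corollary: it is stated as an immediate consequence of the Angelesco--Nikishin theorem on the zeros of $P_{\vec{n}}$, which is in turn quoted without proof. Your direct attack on the homogeneous system is exactly the classical sign-change/zero-counting argument that underlies that theorem, so in substance the two routes coincide; the advantage of your formulation is that proving nonsingularity of the $|\vec{n}|\times|\vec{n}|$ matrix yields existence and uniqueness of the type II polynomial and of the type I vector simultaneously (via the transpose remark in the paper), and it avoids the mild circularity you correctly point out, namely that the zero-location theorem already presupposes the existence of $P_{\vec{n}}$. The two points you flag are indeed the only ones needing care and you handle them properly: each $\mu_j$ must have infinite support (so a nonzero polynomial is not $\mu_j$-null, making the integral of the constant-sign product strictly nonzero), and the zeros produced lie in the pairwise disjoint interiors $\stackrel{\circ}{\Delta_j}$, so the counts add up to at least $|\vec{n}|$ even when the intervals are allowed to touch, contradicting $\deg R \leq |\vec{n}|-1$.
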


\noindent\shadowbox{\parbox{12cm}{
\begin{exercise}
Show that every $A_{\vec{n},j}$ has $n_j-1$ zeros on $\stackrel{\circ}{\Delta_j}$.
\end{exercise}}}

For another system of measures, which are all supported on the same interval $[a,b]$, we need to recall the notion of a Chebyshev system.

\begin{definition}
The functions $\varphi_1,\ldots,\varphi_n$ are a \textbf{Chebyshev system} on $[a,b]$
if every linear combination $\sum_{i=1}^n a_i \varphi_i$ with $(a_1,\ldots,a_n) \neq (0,\ldots,0)$ has at most
$n-1$ zeros on $[a,b]$.
\end{definition}

We can then define an \textit{Algebraic Chebyshev system}:

\begin{definition}[AT-system]
The measures $(\mu_1,\ldots,\mu_r)$ are an \textit{AT-system} on the interval $[a,b]$ if the measures are all absolutely continuous
with respect to a positive measure $\mu$ on $[a,b]$, i.e., $d\mu_j(x) = w_j(x)\, d\mu(x)$ $(1 \leq j \leq r)$, and for every $\vec{n}$
the functions
\begin{multline*} w_1(x), xw_1(x), \ldots, x^{n_1-1}w_1(x),\ w_2(x), x w_2(x), \ldots, x^{n_2-1} w_2(x),  \\ 
   \ldots,    w_r(x), xw_r(x), \ldots, x^{n_r-1} w_r(x) 
\end{multline*}
are a Chebyshev system on $[a,b]$.
\end{definition}

For an AT-system we have some control of the zeros of the type I and type II multiple orthogonal polynomials. 

\begin{theorem}
For an AT-system the function 
\[ Q_{\vec{n}}(x) = \sum_{j=1}^r A_{\vec{n},j}(x) w_j(x)  \]
has exactly $|\vec{n}|-1$ sign changes on $(a,b)$.  
Furthermore, the type II multiple orthogonal polynomial $P_{\vec{n}}$ 
has exactly $|\vec{n}|$ distinct zeros on $(a,b)$.
\end{theorem}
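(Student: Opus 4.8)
The plan is to derive both assertions from a single mechanism. The orthogonality relations say precisely that the function in question ($Q_{\vec{n}}$ in the first case, $P_{\vec{n}}$ in the second) is $\mu$-orthogonal to every element of an explicit finite-dimensional Chebyshev space, and a function with too few sign changes cannot be orthogonal to all of them, because it can be paired with an element of that space having exactly the same sign changes, producing a constant-sign integrand with nonzero $\mu$-integral. The one ingredient not already in the excerpt is a standard property of an $N$-dimensional Chebyshev system $\{\varphi_1,\dots,\varphi_N\}$ on $[a,b]$: the evaluation functionals at $N$ distinct interior points are linearly independent (the interpolation property), so given $N-1$ distinct points $t_1<\dots<t_{N-1}$ in $(a,b)$ there is, up to a scalar, a unique nontrivial element of the system vanishing at all of them; since it has at most $N-1$ zeros yet already $N-1$ of them, it changes sign at precisely $t_1,\dots,t_{N-1}$ and nowhere else on $(a,b)$. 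Throughout I use that $\operatorname{supp}\mu$ is infinite, so that any continuous function of constant sign that is not $\mu$-a.e.\ zero has nonzero $\mu$-integral; and that a simple zero of a continuous function is a sign change.

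I would first handle the type I statement, which also shows an AT-system is perfect, so that $Q_{\vec{n}}$ and $P_{\vec{n}}$ are well defined. Write $\mathcal{Q}_{\vec{n}}$ for the span of the $|\vec{n}|$ functions listed in the definition of an AT-system; by hypothesis its generating list is a Chebyshev system on $[a,b]$, so $\dim\mathcal{Q}_{\vec{n}}=|\vec{n}|$. The normalization $\int x^{|\vec{n}|-1}Q_{\vec{n}}\,d\mu=1$ forces $Q_{\vec{n}}\not\equiv 0$, hence $Q_{\vec{n}}$ has at most $|\vec{n}|-1$ sign changes on $(a,b)$. If it had $s\le|\vec{n}|-2$ sign changes, at $t_1<\dots<t_s$, then $Q_{\vec{n}}(x)\prod_{i=1}^s(x-t_i)$ has constant sign on $(a,b)$ and is not $\mu$-a.e.\ zero, so its $\mu$-integral is nonzero; but $\prod_{i=1}^s(x-t_i)$ has degree $\le|\vec{n}|-2$, so that integral is $0$ by the orthogonality relations $\int x^k Q_{\vec{n}}\,d\mu=0$ for $0\le k\le|\vec{n}|-2$, a contradiction. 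For perfectness, the same reasoning applied to a hypothetical nontrivial solution of the \emph{homogeneous} type I system (now with all $|\vec{n}|$ relations, so one may test against a polynomial of degree up to $|\vec{n}|-1$) forces $\sum_j A_{\vec{n},j}w_j$ to be a nontrivial element of the Chebyshev space $\mathcal{Q}_{\vec{n}}$ vanishing $\mu$-a.e., hence on infinitely many points, which is impossible; so every $\vec{n}$ is normal and the monic $P_{\vec{n}}$ exists.

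For the type II statement, suppose for contradiction that $P_{\vec{n}}$ has only $m\le|\vec{n}|-1$ sign changes in $(a,b)$, at $t_1<\dots<t_m$. The key move is to build the test function inside a \emph{smaller} multi-index: choose $\vec{\ell}$ with $0\le\ell_j\le n_j$ for all $j$ and $|\vec{\ell}|=m+1$ (possible since $m+1\le|\vec{n}|$), and observe that $\mathcal{Q}_{\vec{\ell}}\subseteq\mathcal{Q}_{\vec{n}}$ is an $(m+1)$-dimensional Chebyshev system, because the AT hypothesis is assumed \emph{for every} multi-index, in particular for $\vec{\ell}$. By the Chebyshev property recalled above there is a nonzero $Q=\sum_j B_j w_j\in\mathcal{Q}_{\vec{\ell}}$, with $\deg B_j\le\ell_j-1\le n_j-1$, changing sign exactly at $t_1,\dots,t_m$ and nowhere else on $(a,b)$. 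Then $P_{\vec{n}}Q$ has constant sign on $(a,b)$ and is not $\mu$-a.e.\ zero, so $\int P_{\vec{n}}Q\,d\mu\ne 0$; on the other hand $\int P_{\vec{n}}Q\,d\mu=\sum_j\int B_j(x)P_{\vec{n}}(x)\,d\mu_j(x)=0$, each term vanishing by the type II orthogonality since $\deg B_j\le n_j-1$. This contradiction shows $P_{\vec{n}}$ has at least $|\vec{n}|$ sign changes; since $\deg P_{\vec{n}}=|\vec{n}|$, counting multiplicities forces exactly $|\vec{n}|$ zeros, all simple and all in $(a,b)$.

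The main obstacle, and the only place the full strength of the AT hypothesis is used, is producing a test function with \emph{prescribed} sign changes: for type I the crude test polynomial $\prod(x-t_i)$ suffices, but for type II one must exhibit an element of the combined Chebyshev space whose sign pattern matches that of $P_{\vec{n}}$ exactly, and the clean way to guarantee this is to drop to a sub-multi-index $\vec{\ell}$ of length $m+1$ and invoke the Chebyshev property there. I would be careful about one technical point inside the recalled Chebyshev fact — namely, that the (essentially unique) element vanishing at the prescribed $m$ points really changes sign at each of them rather than having an even-order contact; this follows from the interpolation property of Chebyshev systems by a small perturbation argument, and in lecture notes it would be quoted from the standard theory of Chebyshev (Haar) systems. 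Everything else (linearity of the integral, $\operatorname{supp}\mu$ infinite, simple zeros are sign changes) is routine and would be stated once and reused.
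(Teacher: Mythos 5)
Your proof is correct. Note that the paper states this theorem without proof (it is a survey and defers to the standard references on multiple orthogonal polynomials), and your argument is precisely the classical one used there: the Chebyshev property bounds the number of sign changes from above, while the orthogonality relations bound it from below by pairing with a test function whose sign pattern matches exactly --- a polynomial of degree at most $|\vec{n}|-2$ for type~I, and, for type~II, an element of the Chebyshev space attached to a sub-multi-index $\vec{\ell}\leq\vec{n}$ with $|\vec{\ell}|=m+1$; your observation that the AT hypothesis holds for \emph{every} multi-index, hence for $\vec{\ell}$, is exactly the point that makes the type~II step work, and your homogeneous-system argument for normality (so that $Q_{\vec{n}}$ and $P_{\vec{n}}$ exist) is also the standard one. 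The single fact you quote from Haar-system theory --- that the essentially unique element vanishing at $m$ prescribed interior points changes sign there and nowhere else --- can be made self-contained rather than cited: with $\varphi_1,\ldots,\varphi_{m+1}$ the Chebyshev system for $\vec{\ell}$, take
\[ Q(x)=\det\begin{pmatrix} \varphi_1(t_1) & \cdots & \varphi_{m+1}(t_1)\\ \vdots &  & \vdots \\ \varphi_1(t_m) & \cdots & \varphi_{m+1}(t_m)\\ \varphi_1(x) & \cdots & \varphi_{m+1}(x) \end{pmatrix}; \]
collocation determinants of a Chebyshev system at $m+1$ distinct nodes in increasing order never vanish (a zero determinant would give a nontrivial combination with $m+1$ zeros) and hence, by continuity and connectedness of the ordered simplex, keep a fixed sign, so $Q(x)\neq 0$ off $\{t_1,\ldots,t_m\}$, and moving $x$ across one node permutes the rows by a single transposition, so $Q$ changes sign exactly at $t_1,\ldots,t_m$. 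Your standing assumptions (continuity of the $w_j$, infinite support of $\mu$) are the usual implicit ones and are fine to state once, as you do.
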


\begin{corollary}
Every multi-index in an AT-system is normal (an AT-system is perfect).
\end{corollary}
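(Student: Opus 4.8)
The plan is to prove that for every $\vec n\in\mathbb N^r$ the homogeneous version of the $|\vec n|\times|\vec n|$ linear system defining the type I vector has only the trivial solution; by the transpose relation recorded earlier this is the same as normality for type II, so perfectness follows. Concretely, suppose $(A_{\vec n,1},\dots,A_{\vec n,r})$ with $\deg A_{\vec n,j}\le n_j-1$ satisfies $\int x^k\sum_{j=1}^r A_{\vec n,j}(x)\,d\mu_j(x)=0$ for \emph{all} $0\le k\le|\vec n|-1$ (setting the normalization line equal to $0$ turns the defining conditions into $|\vec n|$ homogeneous equations in $|\vec n|$ unknowns), and assume the $A_{\vec n,j}$ are not all zero. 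Writing $d\mu_j=w_j\,d\mu$ and $Q_{\vec n}=\sum_{j=1}^r A_{\vec n,j}w_j$, these conditions become $\int x^k Q_{\vec n}(x)\,d\mu(x)=0$ for $0\le k\le|\vec n|-1$; note this is the ``homogeneous'' analogue of the sign-change statement in the theorem just proved.

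Next I would invoke the Chebyshev hypothesis: $Q_{\vec n}$ is a nontrivial linear combination of $\{x^i w_j:0\le i\le n_j-1,\ 1\le j\le r\}$, and since these functions form a Chebyshev system on $[a,b]$ they are in particular linearly independent, so $Q_{\vec n}\not\equiv 0$ and $Q_{\vec n}$ has at most $|\vec n|-1$ zeros on $[a,b]$, hence at most $|\vec n|-1$ sign changes on $(a,b)$. Let $t_1<\dots<t_m$ be the points where $Q_{\vec n}$ changes sign, so $m\le|\vec n|-1$, and set $q(x)=\prod_{i=1}^m(x-t_i)$, with $q\equiv 1$ when $m=0$. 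Then $q\,Q_{\vec n}$ does not change sign on $(a,b)$ and vanishes there only at finitely many points, so, using that $\mu$ has infinitely many points in its support, $\int q(x)Q_{\vec n}(x)\,d\mu(x)\ne 0$. But $\deg q=m\le|\vec n|-1$, which contradicts the orthogonality relations above. Hence the homogeneous system has only the trivial solution, the system matrix is invertible, the inhomogeneous type I problem has a unique solution, so $\vec n$ is normal, and by the transpose relation the type II polynomial $P_{\vec n}$ is the unique monic solution as well. Since $\vec n$ was arbitrary, the AT-system is perfect.

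The step I expect to be the main obstacle is the quadrature contradiction: one must check that the Chebyshev bound ``at most $|\vec n|-1$ zeros'' still controls the number of sign changes even when some zeros are not sign changes, and — more delicately — that $\int q\,Q_{\vec n}\,d\mu$ is \emph{strictly} nonzero rather than merely of one sign. This last point is exactly where the assumption that the reference measure $\mu$ has infinite support is needed, since $q\,Q_{\vec n}$ has constant sign away from a finite set of points. The remaining ingredients (that the system for type I is square, and that its matrix is the transpose of the type II matrix) were already set up in the text, so once the sign-change argument is in hand the corollary follows immediately.
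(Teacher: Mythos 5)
Your proof is correct and follows essentially the route the paper intends: the corollary is a direct consequence of the Chebyshev-system sign-change argument underlying the preceding theorem, applied exactly as you do to a putative nontrivial solution of the homogeneous type I system, with the quadrature polynomial $q$ built at the sign changes forcing $\int q\,Q_{\vec n}\,d\mu \neq 0$ and contradicting the $|\vec n|$ homogeneous orthogonality conditions. Your closing remarks (infinite support of $\mu$, sign changes being controlled by the zero bound) are precisely the implicit hypotheses the paper relies on, so nothing further is needed.
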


A very special system of measures was introduced by Nikishin in 1980.

\begin{definition}[Nikishin system for $r=2$]
A \textit{Nikishin system} of order $r=2$ consists of two measures $(\mu_1,\mu_2)$, both supported on an interval $\Delta_2$,
and such that
\[    \frac{d\mu_2(x)}{d\mu_1(x)} = \int_{\Delta_1} \frac{d\sigma(t)}{x-t}, \]
where $\sigma$ is a positive measure on an interval $\Delta_1$ and $\Delta_1 \cap \Delta_2 = \emptyset$. 
\end{definition}

Nikishin showed that indices with $n_1 \geq n_2$ are perfect. Driver and Stahl \cite{DriverStahl} proved the more general statement.

\begin{theorem}[Nikishin, Driver-Stahl]
A Nikishin system of order two is perfect.
\end{theorem}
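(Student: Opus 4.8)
The plan is to recast perfectness as a non-vanishing statement and then exploit the Cauchy-transform relation tying $\mu_2$ to $\mu_1$. By the determinantal normality criterion recalled above, the multi-index $\vec{n}=(n_1,n_2)$ is normal exactly when the only polynomial $Q$ with $\deg Q\le n_1+n_2-1$ satisfying $\int x^k Q\,d\mu_1=0$ for $0\le k\le n_1-1$ and $\int x^k Q\,d\mu_2=0$ for $0\le k\le n_2-1$ is $Q\equiv 0$; equivalently, any nonzero $Q$ obeying these $n_1+n_2$ relations has $\deg Q\ge n_1+n_2$. So I would fix such a $Q\not\equiv 0$ and work to show $\deg Q\ge n_1+n_2$; since $\vec{n}$ is arbitrary this gives perfectness.

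First, from the $\mu_1$-relations I would extract sign changes in the standard way: if $Q$ changed sign at fewer than $n_1$ points of $\textup{supp}(\mu_1)\subset\Delta_2$, multiplying $Q$ by the polynomial vanishing at those points would contradict orthogonality, so $Q$ has at least $n_1$ sign changes in $\stackrel{\circ}{\Delta_2}$. Write $Q=q_1 h$, where $q_1$ is the monic polynomial carrying these sign changes ($\deg q_1=r\ge n_1$) and $h=Q/q_1$ has constant sign on $\textup{supp}(\mu_1)$.

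The core step transfers the $\mu_2$-relations to $\Delta_1$. Introduce the second-kind function $g(z)=\int_{\Delta_2}\frac{Q(x)}{z-x}\,d\mu_1(x)$: it is analytic on $\overline{\mathbb{C}}\setminus\Delta_2$, hence real-analytic on $\Delta_1$; it is not identically $0$ because $\mu_1$ has infinitely many points; and $g(z)=O(z^{-n_1-1})$ as $z\to\infty$ by the $\mu_1$-relations. Writing $d\mu_2=\widehat{\sigma}\,d\mu_1$ with $\widehat{\sigma}(x)=\int_{\Delta_1}\frac{d\sigma(t)}{x-t}$, using Fubini and the identity $\frac{x^k}{x-t}=(x^{k-1}+x^{k-2}t+\cdots+t^{k-1})+\frac{t^k}{x-t}$, the relations $\int x^k Q\,d\mu_2=0$ for $0\le k\le n_2-1$ turn into $\int_{\Delta_1}t^k g(t)\,d\sigma(t)=$ (an explicit linear combination of the $\mu_1$-moments $\int x^{\ell}Q\,d\mu_1$ with $n_1\le\ell\le k-1$); in particular these vanish for $0\le k\le\min(n_1,n_2-1)$. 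Hence, when $n_1\ge n_2$, $g$ is $\sigma$-orthogonal on $\Delta_1$ to all polynomials of degree $\le n_2-1$, so it has at least $n_2$ sign changes there. Combining this with the factorization $Q=q_1 h$ --- $g$ is the Cauchy transform of $q_1 h\,d\mu_1$, and $h$ has fixed sign on $\textup{supp}(\mu_1)$ --- through a careful argument-principle / zero-counting argument for $g$ on $\overline{\mathbb{C}}\setminus(\Delta_1\cup\Delta_2)$ (using its analyticity off $\Delta_2$, its decay at $\infty$, and the boundary jump of $\widehat{\sigma}$ across $\Delta_1$) forces $\deg Q\ge n_1+n_2$, the desired contradiction. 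This settles the range $n_1\ge n_2$, which is Nikishin's original case.

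The main obstacle, and the genuine content of the Driver--Stahl sharpening, is the complementary range $n_1<n_2$: then the Fubini computation above leaves real correction terms (the moments $\int x^{\ell}Q\,d\mu_1$ with $n_1\le\ell\le n_2-2$ are not yet controlled), so $g$ is only guaranteed $n_1+1$ sign changes on $\Delta_1$ and the naive count undershoots $n_1+n_2$. I would resolve this by induction on $|\vec{n}|=n_1+n_2$, invoking the classical structural fact that, up to an affine polynomial, $1/\widehat{\sigma}$ is again the Cauchy transform of a positive measure $\tau$ supported in $\Delta_1$ (the $J$-continued-fraction / Markov structure). This lets one re-read $g|_{\Delta_1}$ paired with $\sigma$ as the second-kind object of a Nikishin system built from $\sigma$ and $\tau$ with a strictly smaller index, so that the induction hypothesis closes the argument. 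An alternative is to lift everything to the two-sheeted Riemann surface on which $\widehat{\sigma}$ becomes single-valued and count zeros of a single meromorphic function there, or to prove directly that $(\mu_1,\mu_2)$ forms an AT-system on $\Delta_2$ --- i.e.\ that $1,\dots,x^{n_1-1},\widehat{\sigma},\dots,x^{n_2-1}\widehat{\sigma}$ is a Chebyshev system on $\Delta_2$ --- and then quote the perfectness theorem for AT-systems. In each route, the delicate point, where I expect the real work to lie, is the bookkeeping of indices and of the extra polynomial factors produced by the duality $\widehat{\sigma}\mapsto -1/\widehat{\sigma}$, and checking that the reduced problem is a bona fide smaller Nikishin system to which the induction applies.
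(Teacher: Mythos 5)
First, a point of reference: the paper does not prove this theorem at all --- it is quoted from Driver and Stahl \cite{DriverStahl} (and the general-$r$ case from \cite{FidLop}) --- so your proposal can only be judged on its own terms. On those terms, the first half is sound in outline and is the classical argument for Nikishin's range: the reformulation of normality (a nonzero $Q$ with $\deg Q\le n_1+n_2-1$ satisfying all $n_1+n_2$ orthogonality relations must not exist), the passage to $g(z)=\int Q(x)(z-x)^{-1}d\mu_1(x)$ with $g(z)=O(z^{-n_1-1})$, and the Fubini computation showing $\int t^kg(t)\,d\sigma(t)=0$ for $0\le k\le\min(n_1,n_2-1)$ are all correct. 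But even here the decisive step is only gestured at (``a careful argument-principle / zero-counting argument''). The standard way to close it needs neither the factorization $Q=q_1h$ nor the argument principle: let $R$ be the monic polynomial vanishing at $m\ge n_2$ sign changes of $g$ on $\Delta_1$; then $g/R$ is analytic in $\overline{\mathbb{C}}\setminus\Delta_2$ and is $O(z^{-n_1-m-1})$ at infinity, so Cauchy's theorem applied to $z^kg(z)/R(z)$ for $0\le k\le n_1+m-1$ gives $\int x^kQ(x)R(x)^{-1}d\mu_1(x)=0$; since $R$ has constant sign on $\Delta_2$, $Q$ must have at least $n_1+m\ge n_1+n_2$ sign changes there, contradicting $\deg Q\le n_1+n_2-1$. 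You should state and use this lemma explicitly rather than appeal to an unspecified zero count; note also that your preliminary sign-change analysis of $Q$ on $\Delta_2$ plays no role in this route.

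The genuine gap is exactly where you locate it: the case $n_1<n_2$, which is the entire content of the Driver--Stahl improvement over Nikishin, is not proved. Your induction plan via the duality $\widehat{\sigma}\mapsto 1/\widehat{\sigma}$ (an affine polynomial plus a Markov function of a measure $\tau$ on $\Delta_1$) points in a reasonable direction --- reductions of this type do drive the known proofs --- but the two steps you yourself flag as ``where the real work lies'' are precisely the ones that cannot be waved through: (i) the uncontrolled correction terms $\int x^{\ell}Q\,d\mu_1$, $n_1\le\ell\le n_2-2$, mean $g$ is only orthogonal to polynomials of degree $\le n_1$ with respect to $\sigma$, and it must be verified, not assumed, that the pair $(g|_{\Delta_1},\sigma,\tau)$ together with these extra polynomial terms constitutes a smaller problem of the same (Nikishin) type with the right index, and (ii) the alternative of quoting the AT-system theorem requires proving that $1,\ldots,x^{n_1-1},\widehat{\sigma},\ldots,x^{n_2-1}\widehat{\sigma}$ is a Chebyshev system on $\Delta_2$ for $n_2>n_1+1$, which is essentially equivalent in difficulty to the statement being proved and is exactly what the elementary zero count fails to give in this range. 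As it stands, the proposal proves perfectness only for $n_2\le n_1+1$ (modulo writing out the lemma above) and leaves the theorem's new case as a programme rather than a proof.
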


In order to define a Nikishin system of order $r >2$ we need some notation.
We write $\langle \sigma_1,\sigma_2 \rangle$ for the measure which is absolutely continuous with respect to $\sigma_1$ and
for which the Radon-Nikodym derivative is the Stieltjes transform of $\sigma_2$:
\[    d\langle \sigma_1,\sigma_2 \rangle(x) = \left(  \int \frac{d\sigma_2(t)}{x-t} \right) \, d\sigma_1(x). \]
Nikishin systems of order $r$ can then be defined by induction.

\begin{definition}[Nikishin system for general $r$]
A \textit{Nikishin system} of order $r$  on an interval $\Delta_r$ is a system of $r$ measures $(\mu_1,\mu_2,\ldots,\mu_r)$  supported on $\Delta_r$
such that $\mu_j = \langle \mu_1,\sigma_j\rangle$, $2 \leq j \leq r$, where $(\sigma_2,\ldots,\sigma_r)$ is a Nikishin system of
order $r-1$ on an interval $\Delta_{r-1}$ and $\Delta_r \cap \Delta_{r-1} = \emptyset$.
\end{definition}

\noindent Fidalgo Prieto and L\'opez Lagomasino proved \cite{FidLop} 

\begin{theorem}
Every Nikishin system is perfect.
\end{theorem}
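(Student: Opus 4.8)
The statement asserts that every multi-index $\vec{n}$ is normal for an arbitrary Nikishin system $(\mu_1,\dots,\mu_r)$. The plan is to reduce normality to a zero-counting statement and then induct on the order $r$, exploiting that the generating system $(\sigma_2,\dots,\sigma_r)$ is itself a Nikishin system of order $r-1$ on an interval $\Delta_{r-1}$ disjoint from $\Delta_r$.

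First I would record the reduction. By the transpose relation between the type~I and type~II linear systems noted above, $\vec{n}$ is normal precisely when the only polynomial $Q$ of degree $\le|\vec{n}|-1$ with $\int x^k Q\,d\mu_j=0$ for $0\le k\le n_j-1$ and all $j$ is $Q\equiv 0$. Writing $d\mu_j=w_j\,d\mu_1$ with $w_1\equiv 1$ and $w_j=\widehat{\sigma_j}$ (the Cauchy transform $\widehat{\sigma_j}(x)=\int_{\Delta_{r-1}}d\sigma_j(t)/(x-t)$) for $j\ge 2$, this is equivalent to the AT property: the functions $x^k w_j$ ($0\le k\le n_j-1$, $1\le j\le r$) form a Chebyshev system on $\Delta_r$, after which perfectness follows from the AT-system theorem stated above. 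So the task becomes: a nonzero linear form $F(x)=\sum_{j=1}^r Q_j(x)\widehat{\sigma_j}(x)$ with $\deg Q_j\le n_j-1$ has at most $|\vec{n}|-1$ zeros on $\Delta_r$. (Alternatively, and essentially equivalently, I would work with the type~II polynomial $P_{\vec{n}}$ and its functions of the second kind $R_j(z)=\int P_{\vec{n}}(x)(z-x)^{-1}d\mu_j(x)$, which satisfy $R_j(z)=O(z^{-n_j-1})$ at infinity.)

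For the zero count I would induct on $r$. The case $r=1$ is the classical fact that a nonzero polynomial of degree $\le n_1-1$ cannot be orthogonal to $1,x,\dots,x^{n_1-1}$ against a measure with infinite support, i.e.\ $D_n>0$ from the introduction. For the inductive step, view $F$ as holomorphic on $\overline{\mathbb{C}}\setminus\Delta_{r-1}$ (each $\widehat{\sigma_j}$ is holomorphic off $\Delta_{r-1}$, and $\Delta_r\cap\Delta_{r-1}=\emptyset$). Assuming $F$ had $N\ge|\vec{n}|$ zeros on $\Delta_r$, let $w$ be a monic polynomial vanishing at a suitable number of them and consider $\oint t^k F(t)/w(t)\,dt$ around $\Delta_{r-1}$. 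Evaluating the residue at infinity shows these integrals vanish for an initial range of $k$; collapsing the contour onto $\Delta_{r-1}$, where only the $\widehat{\sigma_j}$ jump, turns them into orthogonality relations $\sum_{j=2}^{r}\int_{\Delta_{r-1}} t^k Q_j(t)\,d\sigma_j(t)/w(t)=0$. Because every zero of $w$ lies in $\Delta_r$, disjoint from $\Delta_{r-1}$, the factor $1/w$ keeps a constant sign on $\Delta_{r-1}$, so these are genuine orthogonality relations for the order-$(r-1)$ Nikishin system $(\sigma_2,\dots,\sigma_r)$ against a multi-index built from $(n_2,\dots,n_r)$; the induction hypothesis forces $Q_2=\dots=Q_r=0$, whence $F=Q_1$ is a polynomial of degree $\le n_1-1\le|\vec{n}|-1$ with $N\ge|\vec{n}|$ zeros, so $Q_1\equiv 0$ — a contradiction.

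The main obstacle is the bookkeeping that makes the last step sharp: one must choose $w$ and the range of $k$ so that the relations obtained on $\Delta_{r-1}$ match \emph{exactly} a normal-multi-index problem for $(\sigma_2,\dots,\sigma_r)$, and a case analysis according to which of the $n_j$ is largest is unavoidable — the naive argument only yields the bound when $n_1$ dominates (this is the ordered-index case already handled by Nikishin, and for $r=2$ the general statement is Driver--Stahl). Removing the ordering restriction for all $r$ is precisely the content of Fidalgo Prieto and López Lagomasino's theorem and requires the extra input of a lemma (essentially due to Rakhmanov) controlling the number of sign changes of a Cauchy transform on an interval disjoint from the support of its generating measure; this is what lets one compare the forced number of sign changes of the reduced object with the a priori bound coming from its degree, and thereby close the induction.
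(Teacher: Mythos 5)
The paper itself offers no proof of this theorem: it is stated as a quoted result of Fidalgo Prieto and L\'opez Lagomasino \cite{FidLop}, so there is no internal argument to compare yours with. Judged on its own terms, your proposal correctly sets up the standard machinery: the reduction of normality to showing that a nonzero form $F=\sum_{j=1}^r Q_j\widehat{\sigma_j}$ with $\deg Q_j\le n_j-1$ has at most $|\vec{n}|-1$ zeros on $\Delta_r$ (the AT property, which by the theorem quoted earlier in the paper suffices for perfectness), and the inductive contour argument: divide out $|\vec{n}|$ alleged zeros by a polynomial $w$, use the vanishing of $\oint t^kF(t)/w(t)\,dt$ via the residue at infinity, collapse the contour onto $\Delta_{r-1}$, and use constancy of the sign of $1/w$ there to land on an orthogonality problem for the order-$(r-1)$ system $(\sigma_2,\ldots,\sigma_r)$. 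This is essentially Nikishin's original argument, and your bookkeeping remarks are accurate.

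The genuine gap is the one you yourself flag in the last paragraph: the residue-at-infinity step only gives the needed range of orthogonality relations, $0\le k\le n_2+\cdots+n_r-1$, when the polynomial part $Q_1$ dominates $F$ at infinity, i.e.\ essentially when $n_1\ge n_j-1$ for all $j$; for a general multi-index the integrand decays too slowly and the induction does not close. But the removal of that ordering restriction \emph{is} the content of the theorem as stated (Nikishin's and Driver--Stahl's partial results are quoted separately just above it in the paper), and your proposal handles it only by appealing, without statement or proof, to ``a lemma essentially due to Rakhmanov'' on sign changes of Cauchy transforms and to the very paper \cite{FidLop} being cited. So what you have is a correct sketch of the reduction plus the classical dominant-index case, not a proof of the stated theorem. (A minor additional point: the AT property is \emph{sufficient} for normality via the quoted theorem; your phrase ``this is equivalent to the AT property'' overstates the reduction, though harmlessly, since you then aim to prove the AT property anyway.)
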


In most cases the measures $(\mu_1,\ldots,\mu_r)$ are absolutely continuous with respect to one fixed measure $\mu$:
\[     d\mu_j(x) = w_j(x)\, d\mu(x), \qquad 1 \leq j \leq r.  \]
We then define the \textit{type I function}
\[     Q_{\vec{n}}(x) = \sum_{j=1}^r A_{\vec{n},j}(x) w_j(x).  \]
The type I functions and the type II polynomials then are very complementary: they form a biorthogonal system for many multi-indices.

\begin{property}[biorthogonality]   \label{prop:biorth}
\[  \int  P_{\vec{n}}(x)Q_{\vec{m}}(x)\, d\mu(x) =
    \begin{cases} 0, & \textup{if } \vec{m} \leq \vec{n}, \\
                  0, & \textup{if } |\vec{n}| \leq |\vec{m}| -2, \\
                  1, & \textup{if } |\vec{n}|=|\vec{m}|-1.  
    \end{cases}     \]
\end{property}

\subsection{Nearest neighbor recurrence relations} 
The usual orthogonal polynomials (the case $r=1$) on the real line always satisfy a three-term recurrence relation that expresses $xp_n(x)$ in terms of the
polynomials with neighboring degrees $p_{n+1}, p_n, p_{n-1}$. A similar result is true for multiple orthogonal polynomials, but there are more neighbors
for a multi-index. Indeed, the multi-index $\vec{n}$ has $r$ neighbors from above by adding 1 to one of the components of $\vec{n}$. We denote these
neighbors from above by $\vec{n}+\vec{e}_k$ for $1 \leq k \leq r$, where $\vec{e}_k = (0,\ldots,0,1,0,\ldots,0)$ with $1$ in position $k$. There are also $r$ neighbors from below, namely
$\vec{n}-\vec{e}_j$, for $1 \leq j \leq r$.
The nearest neighbor recurrence relations for type II multiple orthogonal polynomials are \cite{WVA2}
\begin{eqnarray*}
   xP_{\vec{n}}(x) &=&	 P_{\vec{n}+\vec{e}_1}(x) + b_{\vec{n},1}P_{\vec{n}}(x) + \sum_{j=1}^r a_{\vec{n},j} P_{\vec{n}-\vec{e}_j}(x), \\[-10pt]
                   &\vdots \\
    xP_{\vec{n}}(x) &=&	 P_{\vec{n}+\vec{e}_r}(x) + b_{\vec{n},r}P_{\vec{n}}(x) + \sum_{j=1}^r a_{\vec{n},j} P_{\vec{n}-\vec{e}_j}(x).
\end{eqnarray*}
Observe that one always uses the same linear combination of the neighbors from below.
The nearest neighbor recurrence relations for type I multiple orthogonal polynomials are  
\begin{eqnarray*}
   xQ_{\vec{n}}(x) &=& Q_{\vec{n}-\vec{e}_1}(x) + b_{\vec{n}-\vec{e}_1,1}Q_{\vec{n}}(x) + \sum_{j=1}^r a_{\vec{n},j} Q_{\vec{n}+\vec{e}_j}(x), \\[-10pt]
                   &\vdots \\
    xQ_{\vec{n}}(x) &=& Q_{\vec{n}-\vec{e}_r}(x) + b_{\vec{n}-\vec{e}_r,r}Q_{\vec{n}}(x) + \sum_{j=1}^r a_{\vec{n},j} Q_{\vec{n}+\vec{e}_j}(x).
\end{eqnarray*}
These are using the same recurrence coefficients $a_{\vec{n},j}$, but there is a shift for the recurrence coefficients $b_{\vec{n},k}$.
For $r \geq 2$ the recurrence coefficients $\{a_{\vec{n},j}, 1 \leq j \leq r\}$ and $\{b_{\vec{n},k}, 1 \leq k \leq r\}$ are connected:

\begin{theorem}[Van Assche \cite{WVA2}]  \label{thm:multcomp}
The recurrence coefficients $(a_{\vec{n},1}, \ldots,a_{\vec{n},r})$ and $(b_{\vec{n},1},\ldots,b_{\vec{n},r})$ 
satisfy the partial difference equations
\begin{eqnarray*}
      b_{\vec{n}+\vec{e}_i,j}-b_{\vec{n},j} & = & b_{\vec{n}+\vec{e}_j,i} - b_{\vec{n},i}, \\
     \sum_{k=1}^r a_{\vec{n}+\vec{e}_j,k} - \sum_{k=1}^r a_{\vec{n}+\vec{e}_i,k} & = & \det \begin{pmatrix}
                                                                 b_{\vec{n}+\vec{e}_j,i} & b_{\vec{n},i} \\
                                                                 b_{\vec{n}+\vec{e}_i,j} & b_{\vec{n},j}  \end{pmatrix}, \\
      \frac{a_{\vec{n},i}}{a_{\vec{n}+\vec{e}_j,i}}  &=&  \frac{b_{\vec{n}-\vec{e}_i,j} - b_{\vec{n}-\vec{e}_i,i}}{b_{\vec{n},j}-b_{\vec{n},i}}, 
\end{eqnarray*} 
for all $1 \leq i \neq j \leq r$.
\end{theorem}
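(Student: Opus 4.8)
The plan is to derive all three partial difference equations from the nearest neighbor recurrence relations by consistency (compatibility) arguments, exploiting the uniqueness of the type II multiple orthogonal polynomials guaranteed by normality. First I would fix a multi-index $\vec{n}$ and two distinct indices $i \neq j$, and write down the two recurrence relations expressing $xP_{\vec{n}}$ in terms of the $\vec{e}_i$-neighbor and the $\vec{e}_j$-neighbor respectively. The key tool is that $xP_{\vec{n}+\vec{e}_i}$ can be expanded in two different ways: directly via the recurrence at the multi-index $\vec{n}+\vec{e}_i$, or by first writing $P_{\vec{n}+\vec{e}_i} = xP_{\vec{n}} - b_{\vec{n},i}P_{\vec{n}} - \sum_k a_{\vec{n},k}P_{\vec{n}-\vec{e}_k}$ and multiplying by $x$, then re-expanding each $xP_{\vec{n}-\vec{e}_k}$ and $xP_{\vec{n}}$ using the recurrences again. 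Equating the two expansions and using that the polynomials $\{P_{\vec{m}}\}$ appearing (which have distinct degrees or can be separated by further applications of biorthogonality, Property \ref{prop:biorth}) are linearly independent forces identities among the coefficients.

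More concretely, I would compute $x^2 P_{\vec{n}}$ in two ways: once by applying the $\vec{e}_i$-recurrence and then expanding the result, and once by applying the $\vec{e}_j$-recurrence and then expanding. Matching the coefficient of $P_{\vec{n}}$ on both sides yields the first equation $b_{\vec{n}+\vec{e}_i,j} - b_{\vec{n},j} = b_{\vec{n}+\vec{e}_j,i} - b_{\vec{n},i}$ (this is the symmetry of the discrete "mixed second difference" of the $b$'s). Matching the coefficient of $P_{\vec{n}-\vec{e}_i}$ — or more robustly, using the biorthogonality pairing $\int x^2 P_{\vec{n}} Q_{\vec{m}}\,d\mu$ against a suitable type I function $Q_{\vec{m}}$ with $|\vec{m}| = |\vec{n}|-1$ — isolates the combination $\sum_k a_{\vec{n}+\vec{e}_j,k} - \sum_k a_{\vec{n}+\vec{e}_i,k}$ and, after collecting the quadratic terms in the $b$'s, produces the $2\times 2$ determinant on the right-hand side of the second equation. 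The third equation, relating $a_{\vec{n},i}/a_{\vec{n}+\vec{e}_j,i}$ to a ratio of differences of $b$'s, I would obtain similarly but shifting down: expand $xP_{\vec{n}}$ with the $\vec{e}_i$-recurrence, then feed in the recurrence for $P_{\vec{n}-\vec{e}_i}$ evaluated at the $\vec{e}_j$-shift, and compare with the expansion of $xP_{\vec{n}-\vec{e}_i+\vec{e}_j}$; the coefficient of $P_{\vec{n}-\vec{e}_i}$ then gives the claimed ratio. Throughout, the degrees of the polynomials involved differ by at most one, so linear independence (equivalently, pairing against type I functions of the right length) cleanly separates the terms.

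The main obstacle I anticipate is bookkeeping: when one re-expands $xP_{\vec{n}-\vec{e}_k}$ using the recurrence at the shifted multi-index $\vec{n}-\vec{e}_k$, the "neighbors from below" become $\vec{n}-\vec{e}_k-\vec{e}_\ell$ and the $a$-coefficients carry the shifted multi-index $\vec{n}-\vec{e}_k$, so one must track which coefficient is evaluated at which multi-index very carefully — a sign or shift error here collapses the whole computation. A secondary subtlety is justifying that the polynomials of equal degree $|\vec{n}|$ appearing on both sides (namely $P_{\vec{n}}$ itself and possibly $P_{\vec{n}+\vec{e}_i-\vec{e}_j}$, which also has degree $|\vec{n}|$) can be told apart: this is where one genuinely needs normality of the system, since otherwise these polynomials need not even be well-defined, and it is cleanest to handle by integrating against the type I functions $Q_{\vec{m}}$ and invoking Property \ref{prop:biorth} rather than by a naive degree count. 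Once the coefficient-matching is organized, each of the three identities drops out of a short linear-algebra computation.
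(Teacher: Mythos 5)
You should first note that the paper does not prove Theorem \ref{thm:multcomp} at all: it is quoted with a citation to \cite{WVA2}, so there is no in-text proof to compare against. Your compatibility strategy is, however, essentially the argument of the cited source: go around the elementary square from $\vec{n}$ to $\vec{n}+\vec{e}_i+\vec{e}_j$ in the two possible orders (normality guarantees both orders produce the \emph{same} monic polynomial $P_{\vec{n}+\vec{e}_i+\vec{e}_j}$, which therefore cancels), and compare what remains. The workhorse you should state explicitly is obtained by subtracting the two nearest neighbor relations at a fixed multi-index: $P_{\vec{m}+\vec{e}_i}-P_{\vec{m}+\vec{e}_j}=(b_{\vec{m},j}-b_{\vec{m},i})P_{\vec{m}}$. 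Using it at $\vec{m}=\vec{n}$ and at $\vec{m}=\vec{n}-\vec{e}_k$ reduces the compared expansions to a combination of $P_{\vec{n}+\vec{e}_i}$, $P_{\vec{n}}$ and the $P_{\vec{n}-\vec{e}_k}$. With this organization your level assignments are shifted by one: the first equation comes from the degree $|\vec{n}|+1$ terms (not from the coefficient of $P_{\vec{n}}$), the second from the $P_{\vec{n}}$ terms after substituting the first, and the third from the $P_{\vec{n}-\vec{e}_k}$ terms; indeed one arrives at $(b_{\vec{n},j}-b_{\vec{n},i})\,a_{\vec{n},k}=a_{\vec{n}+\vec{e}_i,k}\bigl(b_{\vec{n}-\vec{e}_k,k}-b_{\vec{n}-\vec{e}_k,i}\bigr)-a_{\vec{n}+\vec{e}_j,k}\bigl(b_{\vec{n}-\vec{e}_k,k}-b_{\vec{n}-\vec{e}_k,j}\bigr)$, which for $k=i$ is exactly the third equation. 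This is the bookkeeping you flagged, and it is fixable, not fatal.

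The one point that needs a genuine repair is your proposed separation mechanism. Property \ref{prop:biorth} as stated cannot isolate individual coefficients of equal-degree terms: pairing with a type I function $Q_{\vec{m}}$ returns the value $1$ for \emph{every} $P_{\vec{v}}$ with $|\vec{v}|=|\vec{m}|-1$, so it only extracts the sum of the coefficients at a given degree (which is enough for the first two equations, but not for the third, where the $r$ polynomials $P_{\vec{n}-\vec{e}_k}$ all have degree $|\vec{n}|-1$). Instead, integrate the residual identity against $x^{n_k-1}\,d\mu_k$: the defining orthogonality kills $P_{\vec{n}-\vec{e}_\ell}$ for $\ell\neq k$, and normality of $\vec{n}$ guarantees $\int P_{\vec{n}-\vec{e}_k}(x)\,x^{n_k-1}\,d\mu_k(x)\neq 0$ (otherwise $P_{\vec{n}-\vec{e}_k}$ would be a nonzero polynomial of degree less than $|\vec{n}|$ satisfying all orthogonality conditions of index $\vec{n}$). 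Equivalently, $\{P_{\vec{n}-\vec{e}_k}\}_{k=1}^r$ is linearly independent, and this is where normality enters — not merely to define the polynomials, as you suggest. With these two corrections your outline closes into a complete proof along the lines of \cite{WVA2}.
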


By combining the equations of the nearest neighbor recurrence relations, one can also find a recurrence relation of order $r+1$ for
the multiple orthogonal polynomials along a path from $\vec{0}$ to $\vec{n}$ in $\mathbb{N}^r$.
Let $(\vec{n}_k)_{k \geq 0}$ be a path in $\mathbb{N}^r$ starting from $\vec{n}_0=\vec{0}$, 
such that $\vec{n}_{k+1} - \vec{n}_k = \vec{e}_i$ for some $1 \leq i \leq r$. Then
\[   xP_{\vec{n}_k}(x) = P_{\vec{n}_{k+1}}(x) + \sum_{j=0}^r \beta_{\vec{n}_k,j} P_{\vec{n}_{k-j}}(x).  \]
These $\beta_{\vec{n}_k,j}$ coefficients can be expressed in terms of the recurrence coefficients in the nearest neighbor recurrence relations,
but the explicit expression is rather complicated for general $r$.
An important case is the \textit{stepline}:
\[    \vec{n}_k = (\overbrace{i+1,\ldots,i+1}^j, \underbrace{i,\ldots i}_{r-j}), \qquad k = ri + j, \ 0 \leq j \leq r-1.  \]
This recurrence relation of order $r+1$ can be expressed in terms of a Hessenberg matrix with $r$ diagonals below the main diagonal:
\[  \small  x \begin{pmatrix} P_{\vec{n}_0}(x) \\ P_{\vec{n}_1}(x) \\ P_{\vec{n}_2}(x) \\ \vdots  \\ P_{\vec{n}_k}(x) \\ \vdots \end{pmatrix} \rule{4in}{0pt}  \] 
\[  \small = \begin{pmatrix}  \beta_{\vec{n}_0,0} & 1 & 0 & 0 & 0 &  0 &\cdots \\[2pt]
                       \beta_{\vec{n}_1,1} & \beta_{\vec{n}_1,0} & 1 & 0 & 0 &  0 &\cdots \\[-4pt]
                         \vdots & \ddots  & \ddots & 1 & 0   &  0 & \cdots \\[2pt]
                        \beta_{\vec{n}_r,r} & \beta_{\vec{n}_r,r-1} & \cdots & \beta_{\vec{n}_r,0} & 1 &  0 & \cdots \\[2pt]
                        0 & \beta_{\vec{n}_{r+1},r} & \beta_{\vec{n}_{r+1},r-1} & \cdots & \beta_{\vec{n}_{r+1},0} & 1   & \cdots \\[2pt]
                        0 & 0 & \beta_{\vec{n}_{r+2},r} & \beta_{\vec{n}_{r+2},r-1} & \cdots &  \beta_{\vec{n}_{r+2},0} & \cdots \\[2pt] 
                        0 & 0 & 0 & \ddots & \ddots & \ddots  & \ddots   
            \end{pmatrix}
       \begin{pmatrix} P_{\vec{n}_0}(x) \\ P_{\vec{n}_1}(x) \\ P_{\vec{n}_2}(x) \\ \vdots  \\ P_{\vec{n}_k}(x) \\ \vdots \end{pmatrix}. \]

\subsection{Christoffel-Darboux formula}
The Christoffel-Darboux kernel, which is the important reproducing kernel for orthogonal polynomials, has a counterpart in the theory of multiple orthogonal
polynomials. It uses both the type I and type II multiple orthogonal polynomials, and is a sum over a path from $\vec{0}$ to $\vec{n}$ as described before.
The Christoffel-Darboux kernel is defined as
\[     K_{\vec{n}}(x,y) = \sum_{k=0}^{N-1} P_{\vec{n}_k}(x)Q_{\vec{n}_{k+1}}(y)   \]
where $\vec{n}_0=\vec{0}$, $\vec{n}_N=\vec{n}$ and the path in $\mathbb{N}^r$ is such that $\vec{n}_{k+1}-\vec{n}_k=\vec{e}_i$ for some $i$ satisfying $1 \leq i \leq r$,
i.e., in every step the multi-index is increased by 1 in one component. This definition seems to depend on the choice of the path from $\vec{0}$ to $\vec{n}$, but 
surprisingly this kernel is independent of that chosen path. This is a consequence of the relations between the recurrence coefficients given by Theorem \ref{thm:multcomp}
and is best explained by the following analogue of the Christoffel-Darboux formula for orthogonal polynomials:
 
\begin{theorem}[Daems and Kuijlaars]
Let $(\vec{n}_k)_{0 \leq k \leq N}$ be a path in $\mathbb{N}^r$ starting from $\vec{n}_0=\vec{0}$ and ending in
$\vec{n}_N=\vec{n}$ (where $N = |\vec{n}|$), 
such that $\vec{n}_{k+1} - \vec{n}_k = \vec{e}_i$ for some $1 \leq i \leq r$. Then
\[  (x-y) \sum_{k=0}^{N-1}  P_{\vec{n}_k}(x)Q_{\vec{n}_{k+1}}(y) = P_{\vec{n}}(x)Q_{\vec{n}}(y)
    - \sum_{j=1}^r a_{\vec{n},j} P_{\vec{n}-\vec{e}_j}(x)Q_{\vec{n}+\vec{e}_j}(y) .  \]
\end{theorem}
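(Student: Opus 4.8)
The plan is to prove the identity by induction on $N=|\vec{n}|$ (equivalently, on the length of the path), the engine being a single-step identity extracted from the nearest neighbor recurrence relations. To obtain that identity, fix a multi-index $\vec{m}$ and put $\vec{m}'=\vec{m}+\vec{e}_i$; applying the type II recurrence in direction $i$ to $xP_{\vec{m}}(x)$ and the type I recurrence in direction $i$ to $yQ_{\vec{m}'}(y)$ (note $\vec{m}'-\vec{e}_i=\vec{m}$, so both recurrences carry the coefficient $b_{\vec{m},i}$), then multiplying the first by $Q_{\vec{m}'}(y)$, the second by $P_{\vec{m}}(x)$, and subtracting eliminates the $b$-term and yields
\begin{multline*}
(x-y)P_{\vec{m}}(x)Q_{\vec{m}'}(y) = P_{\vec{m}'}(x)Q_{\vec{m}'}(y) - P_{\vec{m}}(x)Q_{\vec{m}}(y) \\
+ \sum_{j=1}^r a_{\vec{m},j}P_{\vec{m}-\vec{e}_j}(x)Q_{\vec{m}'}(y) - \sum_{j=1}^r a_{\vec{m}',j}P_{\vec{m}}(x)Q_{\vec{m}'+\vec{e}_j}(y).
\end{multline*}

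The induction then goes as follows. For $N=0$ the left side is empty and the right side vanishes under the standard conventions $Q_{\vec{0}}=0$, $P_{-\vec{e}_j}=0$, $a_{\vec{0},j}=0$. Assuming the claimed formula for all paths of length $N$ with arbitrary endpoint, I would take a path $(\vec{n}_k)_{0\le k\le N+1}$ with $\vec{n}_{N+1}=\vec{n}_N+\vec{e}_i$, peel off the last summand, and apply the inductive hypothesis to $(\vec{n}_k)_{0\le k\le N}$ together with the single-step identity for $\vec{m}=\vec{n}_N$. Then the two copies of $P_{\vec{n}_N}(x)Q_{\vec{n}_N}(y)$ cancel, $P_{\vec{n}_{N+1}}(x)Q_{\vec{n}_{N+1}}(y)$ appears as the new leading term, and what remains is to verify that the leftover $a$-terms reorganize into $-\sum_j a_{\vec{n}_{N+1},j}P_{\vec{n}_{N+1}-\vec{e}_j}(x)Q_{\vec{n}_{N+1}+\vec{e}_j}(y)$.

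This reorganization is the crux of the argument. Writing $\vec{n}'=\vec{n}_{N+1}=\vec{n}_N+\vec{e}_i$, the $j=i$ contributions vanish on both sides, and for $j\neq i$ I would invoke two auxiliary differences obtained by subtracting recurrences with different directions: subtracting two type I recurrences at the index $\vec{n}_N+\vec{e}_i+\vec{e}_j$ gives $Q_{\vec{n}'}-Q_{\vec{n}_N+\vec{e}_j}=(b_{\vec{n}_N+\vec{e}_j,i}-b_{\vec{n}_N+\vec{e}_i,j})Q_{\vec{n}_N+\vec{e}_i+\vec{e}_j}$, which by the first relation of Theorem \ref{thm:multcomp} equals $(b_{\vec{n}_N,i}-b_{\vec{n}_N,j})Q_{\vec{n}_N+\vec{e}_i+\vec{e}_j}$, and subtracting two type II recurrences at $\vec{n}_N-\vec{e}_j$ gives $P_{\vec{n}_N}-P_{\vec{n}'-\vec{e}_j}=(b_{\vec{n}_N-\vec{e}_j,i}-b_{\vec{n}_N-\vec{e}_j,j})P_{\vec{n}_N-\vec{e}_j}$. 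After these substitutions both sides of the required equality consist of the same products $P_{\vec{n}_N-\vec{e}_j}(x)Q_{\vec{n}_N+\vec{e}_i+\vec{e}_j}(y)$ and agree coefficient by coefficient exactly when $a_{\vec{n}_N,j}(b_{\vec{n}_N,i}-b_{\vec{n}_N,j})=a_{\vec{n}',j}(b_{\vec{n}_N-\vec{e}_j,i}-b_{\vec{n}_N-\vec{e}_j,j})$, which is precisely the third relation of Theorem \ref{thm:multcomp} with the roles of $i$ and $j$ interchanged, cleared of its denominator.

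I expect the only genuine difficulty to be this last bookkeeping step: the diagonal pieces $P_{\vec{n}_k}Q_{\vec{n}_k}$ telescope immediately, but the $a$-terms do not telescope on their own, and it is precisely their rearrangement that forces the use of the compatibility relations of Theorem \ref{thm:multcomp} --- which is also the reason the kernel, and hence this identity, does not depend on the chosen path. Near the boundary of $\mathbb{N}^r$ the vanishing conventions make all spurious terms drop out automatically, so no separate boundary analysis is required.
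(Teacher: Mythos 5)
Your argument is correct: the single-step identity does follow from pairing the type II recurrence in direction $i$ at $\vec{m}$ with the type I recurrence in direction $i$ at $\vec{m}+\vec{e}_i$ (both carry $b_{\vec{m},i}$, so it cancels), the diagonal products telescope, and the leftover $a$-terms match term by term because $Q_{\vec{n}_N+\vec{e}_i}-Q_{\vec{n}_N+\vec{e}_j}=(b_{\vec{n}_N,i}-b_{\vec{n}_N,j})Q_{\vec{n}_N+\vec{e}_i+\vec{e}_j}$ and $P_{\vec{n}_N}-P_{\vec{n}_N+\vec{e}_i-\vec{e}_j}=(b_{\vec{n}_N-\vec{e}_j,i}-b_{\vec{n}_N-\vec{e}_j,j})P_{\vec{n}_N-\vec{e}_j}$ reduce the required equality to the third relation of Theorem \ref{thm:multcomp} with $i$ and $j$ interchanged and cleared of denominators, while the vanishing conventions ($a_{\vec{n},j}=0$ when $n_j=0$, $Q_{\vec{0}}=0$) dispose of the boundary cases. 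The paper itself gives no proof, only the references to Daems--Kuijlaars and to \cite{WVA2}; your induction is exactly the nearest-neighbor-recurrence proof that the paper attributes to the latter.
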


\begin{proof}
This was first proved in \cite{DaemsKuijl2004} and a proof based on the nearest neighbor recurrence relations can be found in \cite{WVA2}.
\end{proof}

The sum depends only on the endpoint $\vec{n}$ of the path in $\mathbb{N}^r$ and not on the path from $\vec{0}$ to this point. In many cases this Christoffel-Darboux kernel
can be used to generate a determinantal process by using Theorem \ref{thm:detproc} and the biorthogonality in Property \ref{prop:biorth}. The only thing which is not
obvious is the positivity $K_{\vec{n}}(x,x) \geq 0$, which needs to be checked separately. See \cite{Kuijl} for more details about such determinantal processes.

\subsection{Hermite-Pad\'e approximation}
Multiple orthogonal polynomials have their roots in Hermite-Pad\'e approximation, which was introduced by Hermite and investigated in detail by Pad\'e (for $r=1$).
Hermite-Pad\'e approximation is a method to approximate $r$ functions simultaneously by rational functions. Multiple orthogonal polynomials appear when one
uses Hermite-Pad\'e approximation near infinity.
Let $(f_1,\ldots,f_r)$ be $r$ Markov functions, i.e.,
\[      f_j(z) = \int \frac{d\mu_j(x)}{z-x} = \sum_{k=0}^\infty \frac{m_k^{(j)}}{z^{k+1}} .  \]

\begin{definition}[Type I Hermite-Pad\'e approximation]
Type I Hermite-Pad\'e approximation is to find $r$ polynomials $(A_{\vec{n},1},\ldots,A_{\vec{n},r})$, with
$\deg A_{\vec{n},j} \leq n_j-1$, and a polynomial $B_{\vec{n}}$ such that
\begin{equation}  \label{HPI}
    \sum_{j=1}^r A_{\vec{n},j}(z) f_j(z) - B_{\vec{n}}(z) = \mathcal{O}\left( \frac{1}{z^{|\vec{n}|}} \right),
 \qquad z \to \infty. 
\end{equation}
\end{definition}

The solution is that $(A_{\vec{n},1},\ldots,A_{\vec{n},r})$ is the type I multiple orthogonal polynomial vector, and
\[    B_{\vec{n}}(z) = \int \sum_{j=1}^r \frac{A_{\vec{n},j}(z) - A_{\vec{n},j}(x)}{z-x} \, d\mu_j(x).  \]
The error in this approximation problem can also be expressed in terms of the type I multiple orthogonal polynomials. One has
\[ \sum_{j=1}^r A_{\vec{n},j}(z) f_j(z) - B_{\vec{n}}(z) = \int  \sum_{j=1}^r \frac{A_{\vec{n},j}(x)}{z-x} \, d\mu_j(x),  \]
and the orthogonality properties of the type I multiple orthogonal polynomials indeed show that \eqref{HPI} holds.

\begin{definition}[Type II Hermite-Pad\'e approximation]
Type II Hermite-Pad\'e approximation is to find a polynomial $P_{\vec{n}}$ of degree $\leq |\vec{n}|$
and polynomials $Q_{\vec{n},1}, \ldots,$ $Q_{\vec{n},r}$ such that
\begin{equation}   \label{HPII}
    P_{\vec{n}}(z) f_j(z) - Q_{\vec{n},j}(z) = \mathcal{O}\left( \frac{1}{z^{n_j+1}} \right),
 \qquad z \to \infty,  
\end{equation}
for $1 \leq j \leq r$.
\end{definition}

The solution for this approximation problem is to take the type II multiple orthogonal polynomial $P_{\vec{n}}$ and
\[    Q_{\vec{n},j}(z) = \int  \frac{P_{\vec{n}}(z) - P_{\vec{n}}(x)}{z-x} \, d\mu_j(x).  \]
Observe that this approximation problem is to find rational approximants to each $f_j$ with a \textit{common denominator}, and this
common denominator turns out to be the type II multiple orthogonal polynomial. 
The error can again be expressed in terms of the multiple orthogonal polynomial:
\[   P_{\vec{n}}(z) f_j(z) - Q_{\vec{n},j}(z) = \int \frac{P_{\vec{n}}(x)}{z-x}\, d\mu_j(x), \]
which can be verified by using the orthogonality conditions for the type II multiple orthogonal polynomial.

Hermite-Pad\'e approximants are used frequently in number theory to find good rational approximants for real numbers and to prove irrationality
and transcendence of some important real numbers. Hermite used these approximants (but at $0$ rather than $\infty$) to prove that $e$ is a transcendental number.

\subsection{Multiple Hermite polynomials}
As an example we will describe multiple Hermite polynomials in some detail and explain some applications where they are used.
The type II multiple Hermite polynomials $H_{\vec{n}}$ satisfy
\[   \int_{-\infty}^\infty H_{\vec{n}}(x) x^k e^{-x^2+c_jx}\, dx = 0, \qquad 0 \leq k \leq n_j-1 \]
for $1 \leq j \leq r$, with $c_i \neq c_j$ whenever $i \neq j$. This condition on the parameters $c_1,\ldots,c_r$ guarantees that every 
multi-index $\vec{n}$ is normal, since the measures with weight function $e^{-x^2+c_jx}$ $(1\leq j \leq r)$ form an AT-system. 
These multiple orthogonal polynomials can be obtained by using the \textit{Rodrigues formula}
\[    e^{-x^2} H_{\vec{n}}(x) = \frac{(-1)^{|\vec{n}|}}{2^{|\vec{n}|}} \left( \prod_{j=1}^r e^{-c_jx} \frac{d^{n_j}}{dx^{n_j}} e^{c_jx} \right) e^{-x^2}. \]

\noindent\shadowbox{\parbox{12cm}{
\begin{exercise}
Show that the differential operators
\[        e^{-c_jx} \frac{d^{n_j}}{dx^{n_j}} e^{c_j x}, \qquad  1 \leq j \leq r \]
are commuting. Use this (and integration by parts) to show that this indeed gives the type II multiple Hermite polynomial.
\end{exercise}}}

By using this Rodrigues formula (and the Leibniz rule for the $n$th derivative of a product), one finds the \textit{explicit expression}
\[  H_{\vec{n}}(x) = \frac{(-1)^{|\vec{n}|}}{2^{|\vec{n}|}} \sum_{k_1=0}^{n_1} \cdots \sum_{k_r=0}^{n_r} 
   \binom{n_1}{k_1} \cdots \binom{n_r}{k_r} c_1^{n_1-k_1} \cdots c_r^{n_r-k_r} (-1)^{|\vec{k}|} H_{|\vec{k}|}(x), \]
where $H_n$ are the usual Hermite polynomials.
The nearest neighbor recurrence relations for multiple Hermite polynomials are quite simple:
\[   xH_{\vec{n}}(x) = H_{\vec{n}+\vec{e}_k}(x) + \frac{c_k}{2} H_{\vec{n}}(x) + \frac12 \sum_{j=1}^r n_j H_{\vec{n}-\vec{e}_j}(x) , \qquad 1 \leq k \leq r. \]
They also have some useful differential properties: there are $r$ \textit{raising operators}
\[   \left(  e^{-x^2+c_jx} H_{\vec{n}-\vec{e}_j}(x) \right)' = -2 e^{-x^2+c_jx} H_{\vec{n}}(x), \qquad 1 \leq j \leq r,  \]
and one  \textit{lowering operator}
\[  H_{\vec{n}}'(x) = \sum_{j=1}^r n_j H_{\vec{n}-\vec{e}_j}(x).  \]
By combining these raising operators and the lowering operator one finds a \textit{differential equation} of order $r+1$: 
\[  \left( \prod_{j=1}^r D_j \right) D H_{\vec{n}}(x) = -2 \left( \sum_{j=1}^r n_j \prod_{i \neq j} D_i \right) H_{\vec{n}}(x), \]
where
\[    D = \frac{d}{dx}, \qquad  D_j = e^{x^2-c_jx} D e^{-x^2+c_jx}.  \]
One can also find some integral representations (see \cite{BleKuijl})
\[  H_{\vec{n}}(x) = \frac{1}{\sqrt{\pi} i} \int_{-i\infty}^{i\infty} e^{(s-x)^2} \prod_{j=1}^r \left( s- \frac{c_j}{2} \right)^{n_j}\, ds.  \]
For the type I multiple Hermite polynomials one has
\[  e^{-x^2+c_kx} A_{\vec{n},k}(x)= \frac{1}{\sqrt{\pi} 2\pi i} \oint_{\Gamma_k} e^{-(t-x)^2} \prod_{j=1}^r \left( t- \frac{c_j}2 \right)^{-n_j}\, dt , \]
where $\Gamma_k$ is a closed contour encircling $c_k/2$ once and none of the other $c_j/2$, and
\[  Q_{\vec{n}}(x) = \sum_{k=1}^r e^{-x^2+c_kx} A_{\vec{n},k}(x) = \frac{1}{\sqrt{\pi} 2\pi i} \oint_{\Gamma} e^{-(t-x)^2} \prod_{j=1}^r \left( t- \frac{c_j}2 \right)^{-n_j}\, dt, \]
where $\Gamma$ is a closed contour encircling all $c_j/2$.

\subsubsection{Random matrices}
These multiple Hermite polynomials are useful for investigating \textit{random matrices with external source} \cite{BleKuijlI}.
Let $\mathbf{M}$ be a random $N \times N$ Hermitian matrix and consider the \textit{ensemble} with probability
distribution
\[     \frac{1}{Z_N} \exp \Bigl( -\textup{Tr} (M^2 - AM) \Bigr)\ dM, \qquad  dM = \prod_{i=1}^N dM_{i,i} \prod_{1\leq i<j \leq N} dM_{i,j}, \]
where $A$ is a fixed  Hermitian matrix (the \textit{external source}).
The average characteristic polynomial is a multiple Hermite polynomial:

\begin{property}
Suppose $A$ has eigenvalues $c_1,\ldots,c_r$ with multiplicities $n_1,\ldots,n_r$, then
\[   \mathbb{E}\Bigl(\det(\mathbf{M}-zI_N) \Bigr) = (-1)^{|\vec{n}|} H_{\vec{n}}(z).   \]
\end{property}

Furthermore, the eigenvalues form a determinantal process with the Christoffel-Darboux kernel for multiple Hermite polynomials:  

\begin{property}
The density of the eigenvalues is given by
\[  P_N(\lambda_1,\ldots,\lambda_N) = \frac{1}{N!} \det  \Bigl( K_N(\lambda_i,\lambda_j) \Bigr)_{i,j=1}^{N}, \]
where the kernel is given by
\[   K_N(x,y) = e^{-(x^2+y^2)/2} \sum_{k=0}^{N-1} H_{\vec{n}_k}(x)Q_{\vec{n}_{k+1}}(y), \]
with $(\vec{n}_k)_{0\leq k \leq N}$ a path from $\vec{0}$ to $\vec{n}$ in $\mathbb{N}^r$ and
\[    Q_{\vec{n}}(y) = \sum_{j=1}^r A_{\vec{n},j}(y) e^{c_jy} . \]
\end{property}

\noindent This means that we can also find the correlation functions:

\begin{property}
The $m$-point correlation function 
\[  \rho_m(\lambda_1,\ldots,\lambda_m) = \frac{N!}{(N-m)!} \int_{-\infty}^\infty \cdots \int_{-\infty}^\infty P_N(\lambda_1,\ldots,\lambda_N)
  \, d\lambda_{m+1} \ldots d\lambda_N \]
is given by 
\[  \rho_m(\lambda_1,\ldots,\lambda_m) =  \det  \Bigl( K_N(\lambda_i,\lambda_j) \Bigr)_{i,j=1}^{m}, \]
where the kernel is given by
\[   K_N(x,y) = e^{-(x^2+y^2)/2} \sum_{k=0}^{N-1} H_{\vec{n}_k}(x)Q_{\vec{n}_{k+1}}(y). \]
\end{property}

\subsubsection{Non-intersecting Brownian motions}
Another interesting problem where multiple Hermite polynomials are appearing is to find what happens with $n$ independent Brownian motions
(in fact, $n$ Brownian bridges)
with the constraint that they are not allowed to intersect, see \cite{DaemsKuijl}.

\begin{figure}[tbp] 
  \centering
  \includegraphics[width=4in]{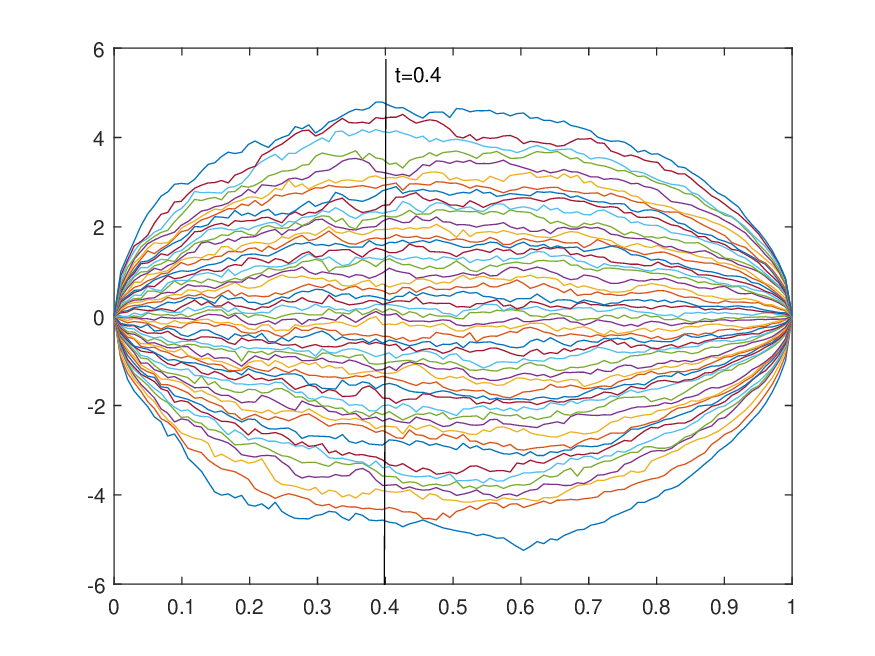}
  \caption{Non-intersecting Brownian motions}
  \label{fig:Brown}
\end{figure}

The density of the probability that the $n$ non-intersecting paths, leaving $(t=0)$ at $a_1,\ldots,a_n$ 
and arriving $(t=1)$ at $b_1,\ldots,b_n$, are at $x_1,\ldots,x_n$ at time $t \in (0,1)$ 
is (Karlin and McGregor \cite{KM}) 
\[    p_{n,t}(x_1,\ldots,x_n) = \frac{1}{Z_n} \det \Bigl( P(t,a_j,x_k) \Bigr)_{j,k=1}^n
                                              \det \Bigl( P(1-t,b_j,x_k) \Bigr)_{j,k=1}^n  , \]
where
\[       P(t,a,x) = \frac{1}{\sqrt{2\pi t}} e^{-\frac1{2t} (x-a)^2}.  \]
When $a_1, \ldots,a_n \to 0$ and $b_1,\ldots,b_n \to 0$ (see Fig. \ref{fig:Brown}) then
\[    p_{n,t}(x_1,\ldots,x_n) = \frac{1}{n!} \det \Bigl( K_n(x_j,x_k) \Bigr)_{j,k=1}^n , \]
where the kernel is given by
\[    K_n(x,y) = e^{-\frac{x^2}{4t} - \frac{y^2}{4(1-t)}} \sum_{k=0}^{n-1} H_k(\frac{x}{\sqrt{2t}})H_k(\frac{y}{\sqrt{2(1-t)}}).   \]
This kernel is related to the Christoffel-Darboux kernel for the usual Hermite polynomials.

When $a_1, \ldots,a_n \to 0$ and $b_1,\ldots,b_{n/2} \to -b$, $b_{n/2+1},\ldots,b_n \to b$ (see Fig. \ref{fig:Brown2}) then
\[    p_{n,t}(x_1,\ldots,x_n) = \frac{1}{n!} \det \Bigl( K_n(x_j,x_k) \Bigr)_{j,k=1}^n , \]
with 
\[    K_n(x,y) = e^{-\frac{x^2}{4t} - \frac{y^2}{4(1-t)}} \sum_{k=0}^{n-1} H_{\vec{n}_k} (\frac{x}{\sqrt{2t}}) 
Q_{\vec{n}_{k+1}} (\frac{y}{\sqrt{2(1-t)}}) ,  \]
with multiple orthogonal polynomials for the weights 
\[    e^{-x^2-2bx}, \quad e^{-x^2+2bx}. \]
This kernel is related to the Christoffel-Darboux kernel for multiple Hermite polynomials.
An interesting phenomenon appears:
for small values of $t$ the points at level $t$ accumulate on one interval, but for larger values of $t$ in $(0,1)$ the points 
accumulate on two disjoint intervals. There is a phase transition at a critical point $t_c \in (0,1)$. A detailed asymptotic analysis
of the kernel near this point will require a special function satisfying a third order differential equation (the Pearsey equation) which is a limiting case of the
third order differential equation of multiple Hermite polynomials. The limiting kernel is known as the Pearsey kernel.

\begin{figure}[tbp] 
  \centering
  \includegraphics[width=3.2in]{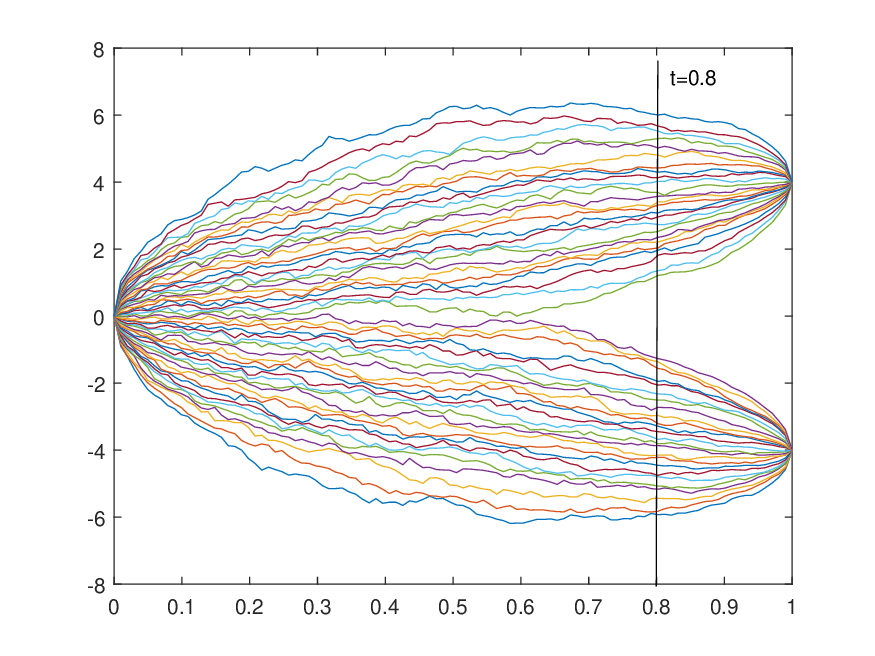}
  \caption{Non-intersecting Brownian motions (two arriving points)}
  \label{fig:Brown2}
\end{figure}


\subsection{Multiple Laguerre polynomials}

The Laguerre weight is 
\[   w(x) = x^\alpha e^{-x}, \qquad x \in [0,\infty), \ \alpha > -1.  \]
There are two easy ways to obtain multiple Laguerre polynomials:
\begin{enumerate}
  \item Changing the parameter $\alpha$ to $\alpha_1,\ldots,\alpha_r$. 
    This gives \textit{multiple Laguerre polynomials of the first kind}.
  \item Changing the exponential decay at infinity from $e^{-x}$ to $e^{-c_jx}$ with parameters $c_1,\ldots,c_r$.
    This gives \textit{multiple Laguerre polynomials of the second kind}.
\end{enumerate}   

\subsubsection{Multiple Laguerre polynomials of the first kind}
Type II multiple Laguerre of the first kind $L_{\vec{n}}^{\vec{\alpha}}(x)$ satisfy
\[   \int_0^\infty x^k L_{\vec{n}}^{\vec{\alpha}}(x) x^{\alpha_j} e^{-x}\, dx = 0, \qquad 0 \leq k \leq n_j-1, \]
for $1 \leq j \leq r$. In order that all multi-indices are normal we need to have parameters $\alpha_j > -1$ and $\alpha_i - \alpha_j \notin \mathbb{Z}$ 
whenever $i \neq j$, in which case the $r$ measures form an AT-system.
The multiple orthogonal polynomials can be found from the \textit{Rodrigues formula}
\[    (-1)^{|\vec{n}|} e^{-x} L_{\vec{n}}^{\vec{\alpha}}(x) = \prod_{j=1}^r \left( x^{-\alpha_j} \frac{d^{n_j}}{dx^{n_j}} x^{n_j+\alpha_j} \right) e^{-x}.  \]
An explicit formula is 
\begin{multline*}
   L_{\vec{n}}^{\vec{\alpha}}(x) = \sum_{k_1=0}^{n_1} \cdots \sum_{k_r=0}^{n_r} (-1)^{|\vec{k}|} \frac{n_1!}{(n_1-k_1)!} \cdots
   \frac{n_r!}{(n_r-k_r)!} \\
  \times \binom{n_r+\alpha_r}{k_r} \binom{n_r+n_{r-1}+\alpha_{r-1}-k_r}{k_{r-1}} \cdots \binom{|\vec{n}|-|\vec{k}|+k_1+\alpha_1}{k_1} 
  x^{|\vec{n}|-|\vec{k}|}  .
\end{multline*}
Another explicit expression with hypergeometric functions is
\[  (-1)^{|\vec{n}|} e^{-x} L_{\vec{n}}^{\vec{\alpha}}(x) = \prod_{j=1}^r (\alpha_j+1)_{n_j} \ {}_rF_r\left(
    \left. \begin{array}{c} n_1+\alpha_1+1,\ldots,n_r+\alpha_r+1 \\ \alpha_1+1,\ldots,\alpha_r+1 \end{array} \right| -x \right) . \]
The nearest neighbor recurrence relations are
\[    xL_{\vec{n}}(x) = L_{\vec{n}+\vec{e}_k}(x) + b_{\vec{n},k} L_{\vec{n}}(x) + \sum_{j=1}^r a_{\vec{n},j} L_{\vec{n}-\vec{e}_j}(x) \]
with
\[     a_{\vec{n},j} = n_j (n_j+\alpha_j) \prod_{i=1,i\neq j}^{r} \frac{n_j+\alpha_j-\alpha_i}{n_j-n_i+\alpha_j-\alpha_i}, \]
and
\[     b_{\vec{n},k} = |\vec{n}|+n_k+\alpha_k+1.  \]
These multiple Laguerre polynomials also have some differential properties.
There are $r$ \textit{raising operators}
\[  \frac{d}{dx} \left( x^{\alpha_j+1} e^{-x}L_{\vec{n}-\vec{e}_j}^{\vec{\alpha}+\vec{e}_j}(x) \right) 
   = -x^{\alpha_j} e^{-x} L_{\vec{n}}^{\vec{\alpha}}(x),  \qquad 1 \leq j \leq r.  \]
and there is one \textit{lowering operator}
\[  \frac{d}{dx} L_{\vec{n}}^{\vec{\alpha}}(x) = \sum_{j=1}^r \frac{\prod_{i=1}^r (n_i+\alpha_i-\alpha_j)}{\prod_{i=1,i\neq j}^r (\alpha_i-\alpha_j)}
     L_{\vec{n}-\vec{e}_j}^{\vec{\alpha}+\vec{e}_j}(x).  \]
Combining them gives the \textit{differential equation}
\[  \left( \prod_{j=1}^r D_j\right) D L_{\vec{n}}^{\vec{\alpha}}(x) 
  = - \sum_{j=1}^r \frac{\prod_{i=1}^r (n_i+\alpha_i-\alpha_j)}{\prod_{i=1,i\neq j}^r (\alpha_i-\alpha_j)} 
 \left( \prod_{i \neq j} D_i \right) L_{\vec{n}}^{\vec{\alpha}}(x).  \]
\[     D = \frac{d}{dx}, \qquad    D_j = x^{-\alpha_j} e^{x} D x^{\alpha_j+1} e^{-x}.  \] 

\subsubsection{Multiple Laguerre polynomials of the second kind}
Type II multiple Laguerre polynomials of the second kind $L_{\vec{n}}^{\alpha,\vec{c}}(x)$ satisfy
\[  \int_0^\infty x^k L_{\vec{n}}^{\alpha,\vec{c}}(x) x^\alpha e^{-c_jx}\, dx = 0, \qquad 0 \leq k \leq n_j-1, \]
for $1 \leq j \leq r$. The parameters need to satisfy $\alpha > -1$ and $c_j >0$ with $c_i \neq c_j$ whenever $i \neq j$.
The \textit{Rodrigues formula} is
\[    (-1)^{|\vec{n}|} \prod_{j=1}^r c_j^{n_j} \ x^\alpha L_{\vec{n}}^{\alpha,\vec{c}}(x)
    = \prod_{j=1}^r \left( e^{c_jx} \frac{d^{n_j}}{dx^{n_j}} e^{-c_j x} \right)  \ x^{|\vec{n}|+\alpha}, \]
which allows to find the explicit expression
\[ L_{\vec{n}}^{\alpha,\vec{c}}(x) = \sum_{k_1=0}^{n_1} \cdots \sum_{k_r=0}^{n_r} \binom{n_1}{k_1} \cdots \binom{n_r}{k_r} 
     \binom{|\vec{n}|+\alpha}{|\vec{k}|} (-1)^{|\vec{k}|}
     \frac{|\vec{k}|!}{c_1^{k_1} \cdots c_r^{k_r}} x^{|\vec{n}|-|\vec{k}|} . \]
The nearest neighbor recurrence relations are
\[    xL_{\vec{n}}(x) = L_{\vec{n}+\vec{e}_k}(x) + b_{\vec{n},k} L_{\vec{n}}(x) + \sum_{j=1}^r a_{\vec{n},j} L_{\vec{n}-\vec{e}_j}(x), \]
with
\[     a_{\vec{n},j} = \frac{n_j(|\vec{n}|+\alpha)}{c_j^2}, \quad  b_{\vec{n},k} = \frac{|\vec{n}|+\alpha+1}{c_k} + \sum_{j=1}^r \frac{n_j}{c_j}. \] 
The differential properties include $r$ \textit{raising operators}
\[    \frac{d}{dx} \left( x^{\alpha+1} e^{-c_jx} L_{\vec{n}-\vec{e}_j}^{\alpha+1,\vec{c}}(x)\right) = - c_j x^{\alpha} e^{-c_jx} L_{\vec{n}}^{\alpha,\vec{c}}(x),
   \qquad 1 \leq j \leq r. \]
and one \textit{lowering operator}
\[  \frac{d}{dx} L_{\vec{n}}^{\alpha,\vec{c}}(x) = \sum_{j=1}^r n_j L_{\vec{n}-\vec{e}_j}^{\alpha+1,\vec{c}}(x).  \]
They give the \textit{differential equation}
\[   \left( \prod_{j=1}^rD_j \right) x^{\alpha+1} D L_{\vec{n}}^{\alpha,\vec{c}}(x) = - \sum_{j=1}^r c_j n_j \left( \prod_{i\neq j} D_i \right) x^\alpha
     L_{\vec{n}}^{\alpha,\vec{c}}(x), \]
where
\[   D = \frac{d}{dx}, \quad D_j = e^{c_jx} D e^{-c_jx} .  \]

\subsubsection{Random matrices: Wishart ensemble}
Wishart (1928) introduced the \textit{Wishart distribution} for $N\times N$ positive definite Hermitian matrices
\[ \mathbf{M} = \mathbf{X} \mathbf{X}^*, \qquad \mathbf{X} \in \mathbb{C}^{N\times (N+p)} ,  \]
where all the columns of $\mathbf{X}$ are independent and
have a multivariate Gauss distribution with covariance matrix $\Sigma$. The density for the Wishart distribution is
\[    \frac{1}{Z_N} e^{-\textup{Tr}(\Sigma^{-1} M)} (\det M)^p\, dM.   \]
If $\Sigma = I_N$ then Laguerre polynomials (with $\alpha = p$) play an important role.
If $\Sigma^{-1}$ has eigenvalues $c_1,\ldots,c_r$ with multiplicities $n_1,\ldots,n_r$, then we need multiple
Laguerre polynomials of the second kind. The average characteristic polynomial is
\[    \mathbb{E} \Bigl( \det (\mathbf{M}-zI_N)\Bigr) = (-1)^{|\vec{n}|} L_{\vec{n}}^{p,\vec{c}}(z).  \]

\subsection{Jacobi-Pi\~neiro polynomials}   
There are several ways to find multiple Jacobi polynomials. Here we only mention one way which uses the same differential operators as the multiple Laguerre polynomials
of the first kind. The Jacobi-Pi\~neiro polynomials $P_{\vec{n}}^{(\vec{\alpha},\beta)}$ satisfy
\[   \int_0^1 P_{\vec{n}}^{(\vec{\alpha},\beta)}(x) x^k x^{\alpha_j} (1-x)^\beta \, dx = 0, \qquad 0 \leq k \leq n_j-1, \]
for $1 \leq j \leq r$. Hence we are using Jacobi weights $x^\alpha (1-x)^\beta$ on the interval $[0,1]$, with $\alpha,\beta >-1$ but with $r$ different parameters $\alpha_1,\ldots,\alpha_r$.
In order to have a perfect system we require $\alpha_i-\alpha_j \notin \mathbb{Z}$ whenever $i \neq j$. They can be obtained using the \textit{Rodrigues formula}
\begin{multline*}  
   (-1)^{|\vec{n}|} \left( \prod_{j=1}^r (|\vec{n}|+\alpha_j+\beta)_{n_j} \right) (1-x)^\beta P_{\vec{n}}^{(\vec{\alpha},\beta)}(x) \\
     = \prod_{j=1}^r \left( x^{-\alpha_j} \frac{d^{n_j}}{dx^{n_j}} x^{n_j+\alpha_j} \right) (1-x)^{\beta+|\vec{n}|}.  
\end{multline*}
An expression in terms of generalized hypergeometric functions is
\begin{multline*}
    (-1)^{|\vec{n}|} \left( \prod_{j=1}^r (|\vec{n}|+\alpha_j+\beta)_{n_j} \right) (1-x)^\beta P_{\vec{n}}^{(\vec{\alpha},\beta)}(x)  \\
    = \prod_{j=1}^r (\alpha_j+1)_{n_j} \  \left. {}_{r+1}F_r \left(  \begin{array}{c} -|\vec{n}|-\beta,\alpha_1+n_1+1,\ldots,\alpha_r+n_r+1 \\
                                                                       \alpha_1+1,\ldots,\alpha_r+1   \end{array} \right|  x \right).
\end{multline*}
This hypergeometric function does not terminate when $\beta$ is not an integer. Another useful expression is
\begin{multline*}
   (-1)^{|\vec{n}|}   P_{\vec{n}}^{(\vec{\alpha},\beta)}(x)  \\ 
   = \frac{n_1!\cdots n_r!}{\prod_{j=1}^r (|\vec{n}|+\alpha_j+\beta)_{n_j}} \sum_{k_1=0}^{n_1} \cdots \sum_{k_r=0}^{n_r} (-1)^{|\vec{k}|} \prod_{j=1}^r \binom{n_j+\alpha_j+ \sum_{i=1}^{j-1} k_i}{n_j-k_j} \\
             \times      \binom{|\vec{n}|+\beta}{|\vec{k}|} \frac{|\vec{k}|! x^{|\vec{k}|} (1-x)^{|\vec{n}|-|\vec{k}|}}{k_1!\cdots k_r!} .
\end{multline*}
Again there are $r$ raising differential operators and one lowering operator and the recurrence coefficients are known explicitly.
These polynomials are useful for rational approximation of polylogarithms, and in particular for the zeta function $\zeta(k)$ at integers.
The polylogarithms are defined by
\[    \textup{Li}_k(z) = \sum_{n=1}^\infty \frac{z^n}{n^k}, \qquad |z| < 1, \]
and one has
\[    \textup{Li}_{k+1}(1/z) = \frac{(-1)^k}{k!} \int_0^1 \frac{\log^k(x)}{z-x}\, dx .  \]
Simultaneous rational approximation to $\textup{Li}_1(1/z),\ldots,\textup{Li}_r(1/z)$ can be done using Hermite-Pad\'e approximation with a limiting case of Jacobi-Pi\~neiro
polynomials where $\beta=0$ and $\alpha_1=\alpha_2=\cdots=\alpha_r=0$, which is possible when $n_1\geq n_2\geq \cdots \geq n_r$.
This is particularly interesting if we let $z \to 1$, since $\textup{Li}_k(1) = \zeta(k)$. Ap\'ery's construction of good rational approximants for $\zeta(3)$
(proving that $\zeta(3)$ is irrational) essentially makes use of these multiple orthogonal polynomials, see, e.g. \cite{WVA4}.

\section{Orthogonal polynomials and Painlev\'e equations}
In this section we describe how orthogonal polynomials are related to non-linear difference and differential equations,
in particular to discrete Painlev\'e equations and the six Painlev\'e differential equations. For a recent discussion
on this relation between orthogonal polynomials and Painlev\'e equations we refer to the monograph \cite{WVA3}. Other useful references are \cite{Clarkson2,Clarkson,WVA1}.

Painlev\'e equations (discrete and continuous) appear at various places in the theory of orthogonal polynomials, in particular
\begin{itemize}
   \item The recurrence coefficients of some semiclassical orthogonal polynomials satisfy discrete Painlev\'e equations. 
   \item The recurrence coefficients of orthogonal polynomials with a Toda-type evolution satisfy Painlev\'e differential equations for which special
         solutions depending on special functions (Airy, Bessel, (confluent) hypergeometric, parabolic cylinder functions) are relevant.  
   \item Rational solutions of Painlev\'e equations can be expressed in terms of Wronskians of orthogonal polynomials.   
   \item The local asymptotics for orthogonal polynomials at critical points is often using special transcendental solutions of Painlev\'e equations.
   \end{itemize}
In this section we will only deal with the first two of these.

What are Painlev\'e (differential) equations?
They are second order nonlinear differential equations 
  \[ y'' = R(y',y,x), \qquad  R \textup{ rational}, \] 
that have the \textit{Painlev\'e property}: \textbf{The general solution is free from movable branch points}. The only singularities which may depend on the initial conditions are poles. 
Painlev\'e and his collaborators found 50 families (up to M\"obius transformations), all of which could be reduced to known equations and six new equations (new at least at the beginning of the 20th century). 
The six Painlev\'e equations are 
\begin{eqnarray} 
  \textup{P}_{\scriptstyle\textup{I}} & & y'' = 6y^2 + x, \nonumber \\
  \textup{P}_{\scriptstyle\textup{II}} & & y'' = 2y^3 + xy + \alpha, \label{P2} \\
  \textup{P}_{\scriptstyle\textup{III}} & & y'' = \frac{(y')^2}{y} - \frac{y'}{x} + \frac{\alpha y^2+\beta}{x} + \gamma y^3 + \frac{\delta}{y},  \label{P3} \\
  \textup{P}_{\scriptstyle\textup{IV}} & & y'' = \frac{(y')^2}{2y} + \frac32 y^3 + 4xy^2 + 2(x^2-\alpha)y + \frac{\beta}{y}, \label{P4} \\
  \textup{P}_{\scriptstyle\textup{V}} & & y'' = \left( \frac{1}{2y} + \frac{1}{y-1} \right) (y')^2 - \frac{y'}{x} + \frac{(y-1)^2}{x^2} \left( \alpha y + 
  \frac{\beta}{y} \right) + \frac{\gamma y}{x} \nonumber \\
   & & \qquad +\ \frac{\delta y(y+1)}{y-1}, \label{P5} \qquad\\
  \textup{P}_{\scriptstyle\textup{VI}} & & y'' = \frac12 \left( \frac{1}{y} + \frac{1}{y-1} + \frac{1}{y-x} \right) (y')^2 
     - \left(\frac{1}{x} + \frac{1}{x-1} + \frac{1}{y-x} \right) y'  \nonumber \\ 
      & & \qquad +\ \frac{y(y-1)(y-x)}{x^2(x-1)^2} 
     \left(\alpha + \frac{\beta x}{y^2} + \frac{\gamma (x-1)}{(y-1)^2} + \frac{\delta x(x-1)}{(y-x)^2} \right),  \nonumber
\end{eqnarray}

Discrete Painlev\'e equations are somewhat more difficult to describe. Roughly speaking they are second order nonlinear recurrence equations for which the continuous limit is a Painlev\'e equation. 
They have the \textit{singularity confinement} property,  but this property is not sufficient to characterize discrete Painlev\'e equations.
A quote by Kruskal \cite{Kruskal} is:
  \begin{quote} Anything simpler becomes trivially integrable, anything more complicated becomes hopelessly non-integrable.
  \end{quote} 
A more correct description is that they are nonlinear recurrence relations with `nice' symmetry and geometry.
A full classification of discrete (and continuous) Painlev\'e equations has been found by Sakai \cite{Sakai}. This is based on \textit{rational surfaces} 
associated with affine root systems. It describes the space of initial values which parametrizes all the solutions (Okamoto \cite{Okamoto79}).
A fine tuning of this classification was given recently by Kajiwara, Noumi and Yamada \cite{KNY}: they also include the \textit{symmetry}, i.e., 
the group of B\"acklund transformations,
which are transformations that map a solution of a Painlev\'e equation to another solution with different parameters.
A partial list of discrete Painlev\'e equations is:
\begin{eqnarray}   
  \textup{d-P}_{\scriptstyle\textup{I}} & & x_{n+1}+x_n + x_{n-1} = \frac{z_n + a(-1)^n}{x_n} + b,  \label{d-P1} \\
  \textup{d-P}_{\scriptstyle\textup{II}} & & x_{n+1} + x_{n-1} = \frac{x_n z_n + a}{1-x_n^2},  \label{d-P2} \\
  \textup{d-P}_{\scriptstyle\textup{IV}} & & (x_{n+1}+x_n)(x_n+x_{n-1}) = \frac{(x_n^2-a^2)(x_n^2-b^2)}{(x_n+z_n)^2 - c^2}, \nonumber \\
  \textup{d-P}_{\scriptstyle\textup{V}} & & \frac{(x_{n+1}+x_n-z_{n+1}-z_n)(x_n+x_{n-1}-z_n-z_{n-1})}{(x_{n+1}+x_n)(x_n+x_{n-1})} \nonumber\\
  &&   \qquad = \frac{[(x_n-z_n)^2-a^2][(x_n-z_n)^2-b^2]}{(x_n-c^2)(x_n-d^2)},   \nonumber
\end{eqnarray}  
where $z_n = \alpha n + \beta$ and $a,b,c,d$ are constants. 
\begin{eqnarray*}
\textup{q-P}_{\scriptstyle\textup{III}} & &  x_{n+1}x_{n-1} = \frac{(x_n-aq_n)(x_n-bq_n)}{(1-cx_n)(1-x_n/c)}, \\
\textup{q-P}_{\scriptstyle\textup{V}} & &   (x_{n+1}x_n-1)(x_nx_{n-1}-1) = 
        \frac{(x_n-a)(x_n-1/a)(x_n-b)(x_n-1/b)}{(1-cx_nq_n)(1-x_nq_n/c)}, \qquad \quad \\
\textup{q-P}_{\scriptstyle\textup{VI}} & & \frac{(x_nx_{n+1}-q_nq_{n+1})(x_nx_{n-1}-q_nq_{n-1})}{(x_nx_{n+1}-1)(x_nx_{n-1}-1)} \nonumber \\
          & & \qquad = \frac{(x_n-aq_n)(x_n-q_n/a)(x_n-bq_n)(x_n-q_n/b)}{(x_n-c)(x_n-1/c)(x_n-d)(x_n-1/d)},
\end{eqnarray*}
where $q_n = q_0 q^n$ and $a,b,c,d$ are constants. 
\begin{eqnarray*}    \label{adPIV}
  \alpha\textup{-d-P}_{\scriptstyle\textup{IV}} & & (x_n+y_n)(x_{n+1}+y_n) = 
 \frac{(y_n-a)(y_n-b)(y_n-c)(y_n-d)}{(y_n+\gamma-z_n)(y_n-\gamma-z_n)} \nonumber \\
  & & (x_n+y_n)(x_n+y_{n-1}) = \frac{(x_n+a)(x_n+b)(x_n+c)(x_n+d)}{(x_n+\delta-z_{n+1/2})(x_n-\delta-z_{n+1/2})} .
\end{eqnarray*}
The latter corresponds to $\textup{d-P}(E_6^{(1)}/A_2^{(1)})$ where $E_6^{(1)}$ is the surface type and $A_2^{(1)}$ is the symmetry type.
Sakai's classification (surface type) corresponds to the following diagram:

{\small
\setlength{\tabcolsep}{2pt}
\begin{tabular}{ccccccccccccccc} 
$E_8^e$  & & & & & & & & & & & & &  $A_1^q$  & \\
$\downarrow$ & & & & & & & & & & & & & \hskip-15pt $\nearrow$ & \\
$E_8^q$ & $\rightarrow$ & $E_7^q$ & $\rightarrow$ & $E_6^q$ & $\rightarrow$ & $D_5^q$ & $\rightarrow$ & $A_4^q$ & $\rightarrow$ & $(A_2+A_1)^q$ &
  $\rightarrow$ & $(A_1+A_1)^q$ & $\rightarrow$ &  $A_1^q$ \\ 
$\downarrow$ & & $\downarrow$ & & $\downarrow$ & & $\downarrow$ & & $\downarrow$ & & $\quad\ \vert \qquad \downarrow \qquad $ & & $\vert \qquad \downarrow$ && \\
$E_8^d$ & $\rightarrow$ & $E_7^d$ & $\rightarrow$ & $E_6^d$ & $\rightarrow$ & $D_4^c$ & $\rightarrow$ & $A_3^c$ & $\rightarrow$ & $\vert \quad (2A_1)^c$ &
 $\rightarrow$ &  $\ \vert \qquad A_1^c$ & \\
& & & & & & & & & $\searrow$ & $\ \ \downarrow \qquad\qquad$ & $\searrow$ & $\quad\  \downarrow \qquad\qquad$ & & \\
& & & & & & & & & & $A_2^c$ \qquad\quad & $\rightarrow$ & $A_1^c$ \qquad \quad & & 
\end{tabular}}

\subsection{Compatibility and Lax pairs}
There is a general philosophy behind the reason why Painlev\'e equations appear for the recurrence coefficients of orthogonal polynomials.
Orthogonal polynomials $P_n(x)$ are really functions of two variables: a discrete variable $n$ and a continuous variable $x$. The three
term recurrence relation \eqref{M3TRR}
 gives a difference equation in the variable $n$, and if the measure is absolutely continuous with a weight function
$w$ that satisfies a Pearson equation
\begin{equation}  \label{Pearson}
    \frac{d}{dx} [\sigma(x) w(x)] = \tau(x) w(x), 
\end{equation}
where $\sigma$ and $\tau$ are polynomials, then the orthogonal polynomials also satisfy differential relations in the variable $x$.
If $\deg \sigma \leq 2$ and $\deg \tau = 1$ then we are dealing with classical orthogonal polynomials which satisfy the second order
differential equation
\[    \sigma(x) y''(x) + \tau(x) y'(x) + \lambda_n y(x) = 0 , \]
where $\lambda_n =  -n(n-1) \sigma''/2 - n \tau'$. In the semiclassical case we still have the Pearson equation \eqref{Pearson} but 
we allow $\deg \sigma > 2$ or $\deg \tau \neq 1$. In that case there is a \textit{structure relation}
\begin{equation}  \label{structure}  
   \sigma(x) \frac{d}{dx} P_n(x) = \sum_{k=n-t}^{n+s-1} A_{n,k} P_k(x) ,
\end{equation}
where $s= \deg \sigma$ and $t = \max \{ \deg \tau, \deg \sigma-1 \}$.
The structure relation \eqref{structure} and the three-term recurrence relation \eqref{M3TRR} have to be \textit{compatible}: if we
differentiate the terms in the recurrence relation \eqref{M3TRR} and replace all the $P_k'(x)$ using the structure relation \eqref{structure},
then we get a linear combination of a finite number of orthogonal polynomials that is equal to $0$. Since (orthogonal) polynomials are linearly
independent in the linear space of polynomials, the coefficients in this linear combination have to be zero, and this gives relations
between the recurrence coefficient $a_n^2, b_n$ and the coefficients $A_{n,k}$ in the structure relation. Eliminating these $A_{n,k}$ gives
recurrence relations for the $a_n^2,b_n$, which turn out to be non-linear. If they are of second order, then we can identify them as
discrete Painlev\'e equations. In this way the three-term recurrence relation and the structure relation can be considered as a \textit{Lax pair} for the
obtained discrete Painlev\'e equation. 

In order to get to the Painlev\'e differential equation, we need to introduce an extra continuous parameter $t$. For this we will 
use an exponential modification of the measure $\mu$ and investigate orthogonal polynomials for the measure $d\mu_t(x) = e^{xt} \, d\mu(x)$,
whenever all the moments of this modified measure exist. We will denote the monic orthogonal polynomials by $P_n(x;t)$ and in this way the
orthogonal polynomial is now a function of three variables $n,x,t$. The behavior for the parameter $t$ is given by:
\begin{theorem}  \label{thm:Toda}
The monic orthogonal polynomials $P_n(x;t)$ for the measure $d\mu_t(x) = e^{xt}\, d\mu(x)$ satisfy
\begin{equation}   \label{TodaAnsatz}
     \frac{d}{dt} P_n(x;t) = C_n(t) P_{n-1}(x;t),  
\end{equation}
where $C_n(t)$ depends only on $t$ and $n$.
\end{theorem}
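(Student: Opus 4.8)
The plan is to use that differentiation in $t$ does not raise the $x$-degree, and then to pin down the single nonzero Fourier coefficient by differentiating the orthogonality relations. Since $P_n(x;t) = x^n + \cdots$ is monic with leading coefficient independent of $t$, the polynomial $\frac{d}{dt}P_n(x;t)$ has degree at most $n-1$ in $x$, so it can be expanded in the orthogonal basis for $\mu_t$:
\[
   \frac{d}{dt}P_n(x;t) = \sum_{k=0}^{n-1} c_{n,k}(t)\, P_k(x;t), \qquad
   c_{n,k}(t) = \frac{\int \partial_t P_n(x;t)\, P_k(x;t)\, d\mu_t(x)}{\int P_k^2(x;t)\, d\mu_t(x)}.
\]

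To evaluate the numerators, observe that $d\mu_t(x)=e^{xt}\,d\mu(x)$ gives $\partial_t\, d\mu_t(x) = x\, d\mu_t(x)$, so differentiating the orthogonality relation $\int P_n(x;t)P_k(x;t)\,d\mu_t(x)=0$ (valid for $0\le k\le n-1$) with respect to $t$ yields
\[
   \int \partial_t P_n\, P_k\, d\mu_t + \int P_n\, \partial_t P_k\, d\mu_t + \int x\, P_n\, P_k\, d\mu_t = 0 .
\]
Here the middle integral vanishes because $\partial_t P_k$ has degree $\le k-1 < n$ and is therefore orthogonal to $P_n$; the last integral vanishes whenever $\deg(xP_k)=k+1<n$, i.e.\ for $k\le n-2$. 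Hence $c_{n,k}(t)=0$ for $0\le k\le n-2$, which already gives $\frac{d}{dt}P_n(x;t) = c_{n,n-1}(t)\,P_{n-1}(x;t)$.

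It then remains to identify $C_n(t)=c_{n,n-1}(t)$. Taking $k=n-1$ in the displayed identity gives $\int \partial_t P_n\, P_{n-1}\, d\mu_t = -\int x\, P_{n-1}\, P_n\, d\mu_t$, and since $xP_{n-1}(x;t)=P_n(x;t)+(\textrm{lower degree})$ with $P_n$ orthogonal to polynomials of lower degree, the right-hand side equals $-\int P_n^2(x;t)\, d\mu_t(x)$. Therefore
\[
   C_n(t) = -\,\frac{\int P_n^2(x;t)\, d\mu_t(x)}{\int P_{n-1}^2(x;t)\, d\mu_t(x)} = -\,a_n^2(t),
\]
the last equality by \eqref{gammaD} applied to the measure $\mu_t$; in particular $C_n$ depends only on $n$ and $t$, as claimed. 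There is no substantive obstacle in this argument; the one point that deserves a line of care is the legitimacy of differentiating under the integral sign in $t$, which is guaranteed by the standing hypothesis that all moments of $\mu_t$ exist (for $t$ in the interval of interest), the remaining steps being routine manipulations with the orthogonality relations.
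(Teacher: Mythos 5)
Your proof is correct and follows essentially the same route as the paper: note that $\frac{d}{dt}P_n(x;t)$ has degree at most $n-1$ and then differentiate the orthogonality relations in $t$ (the paper tests against $x^k$, you test against $P_k(x;t)$, which is equivalent). Your extra identification $C_n(t)=-a_n^2(t)$ is not needed for the statement itself, but it is correct and agrees with what the paper derives later in the proof of the Toda equations.
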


\begin{proof}
First of all, since $P_n(x;t)$ is a monic polynomial, the derivative $\frac{d}{dt} P_n(x;t)$ is a polynomial of degree $\leq n-1$. We will show that
it is orthogonal to $x^k$ for $0 \leq k \leq n-2$ for the measure $e^{xt}\, d\mu(x)$, so that it is proportional to $P_{n-1}(x;t)$, which proves
\eqref{TodaAnsatz}. We start from the orthogonality relations
\[   \int P_n(x;t) x^k e^{xt} \, d\mu(x) = 0, \qquad  0 \leq k \leq n-1, \]
and take derivatives with respect to $t$ to find
\[    \int \left( \frac{d}{dt} P_n(x;t) \right) x^k e^{xt}\, d\mu(x) + \int P_n(x;t) x^{k+1} e^{xt}\, d\mu(x) = 0, \qquad 0 \leq k \leq n-1. \]
The second integral vanishes for $0 \leq k \leq n-2$ by orthogonality, hence
\[   \int \left( \frac{d}{dt} P_n(x;t) \right) x^k e^{xt}\, d\mu(x) = 0, \qquad 0 \leq k \leq n-2, \]
which is what we needed to prove.
\end{proof}

This relation is not new, see e.g. \cite[\S 4]{SpiZhed}, but has not been sufficiently appreciated in the literature.
If we now check the compatibility between \eqref{TodaAnsatz} and the three-term recurrence relation \eqref{M3TRR}, then we find differential-difference
equations for the recurrence coefficients $a_n^2, b_n$.

\begin{theorem}[Toda equations]  \label{thm:Toda}
The recurrence coefficients $a_n^2(t)$ and $b_n(t)$ for the orthogonal polynomials $P_n(x;t)$ satisfy
\begin{eqnarray}
    \frac{d}{dt} a_n^2(t) &=& a_n^2 (b_n-b_{n-1}), \qquad n \geq 1,  \label{Toda-a}\\
    \frac{d}{dt} b_n(t) &=&  a_{n+1}^2 -a_n^2, \qquad n \geq 0,  \label{Toda-b}
\end{eqnarray}
with $a_0^2=0$.
\end{theorem}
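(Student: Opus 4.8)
The plan is to combine the differential--difference relation \eqref{TodaAnsatz} with the three-term recurrence \eqref{M3TRR} in the spirit of a compatibility (Lax-pair) computation. The only non-formal ingredient is to pin down the unknown coefficient $C_n(t)$ in \eqref{TodaAnsatz}; once it is identified, the rest is bookkeeping with the two recurrences.

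First I would identify $C_n$. Write $h_n(t) = \int P_n^2(x;t)\, d\mu_t(x)$, so that $a_n^2 = h_n/h_{n-1}$ by \eqref{gammaD}. Since $P_n(\cdot;t)$ is monic of degree $n$ and orthogonal to all polynomials of degree $\leq n-1$, one has $\int P_n(x;t)\, x^{n-1}\, d\mu_t(x) = 0$; differentiating this in $t$ (differentiation under the integral sign being legitimate on the $t$-interval where the moments of $e^{xt}d\mu$ exist), the derivative of $e^{xt}$ produces an extra factor $x$, and \eqref{TodaAnsatz} together with $x^{n-1} = P_{n-1}(x;t) + (\text{lower degree})$ gives $C_n h_{n-1} + \int x^n P_n(x;t)\, d\mu_t(x) = 0$. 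Because $\int x^n P_n\, d\mu_t = \int P_n^2\, d\mu_t = h_n$, this yields $C_n(t) = -h_n/h_{n-1} = -a_n^2(t)$.

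Next comes the compatibility step: differentiate \eqref{M3TRR} with respect to $t$, substitute $\frac{d}{dt}P_m = C_m P_{m-1} = -a_m^2 P_{m-1}$ for $m = n+1, n, n-1$, and rewrite $(x-b_n)P_{n-1}(x;t)$ using \eqref{M3TRR} at index $n-1$, namely $xP_{n-1} = P_n + b_{n-1}P_{n-1} + a_{n-1}^2 P_{n-2}$. This produces an identity of the form $\alpha P_n + \beta P_{n-1} + \gamma P_{n-2} = 0$ with coefficients built from $a_k^2$, $b_k$ and their $t$-derivatives. Since the $P_k$ are linearly independent in the space of polynomials, $\alpha = \beta = \gamma = 0$: the $P_n$-coefficient gives $-a_{n+1}^2 = -b_n' - a_n^2$, i.e.\ \eqref{Toda-b}; the $P_{n-1}$-coefficient gives $(a_n^2)' = a_n^2(b_n - b_{n-1})$, i.e.\ \eqref{Toda-a}; and the $P_{n-2}$-coefficient reduces to the trivial identity $a_n^2 a_{n-1}^2 = a_{n-1}^2 a_n^2$, confirming consistency. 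The boundary value $a_0^2 = 0$ is the standing convention $P_{-1}=0$, and then \eqref{Toda-b} at $n=0$ reads $b_0' = a_1^2$.

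I do not anticipate a serious obstacle: the only point requiring care is the identification $C_n = -a_n^2$ (and the attendant justification of differentiating under the integral sign). As an alternative route to \eqref{Toda-a} that sidesteps the compatibility computation, one can differentiate $h_n = \int P_n^2\, d\mu_t$ directly: the cross term $\int P_n \dot P_n\, d\mu_t$ vanishes because $\dot P_n$ has degree $\leq n-1$, while $\int x P_n^2\, d\mu_t = b_n h_n$ by \eqref{M3TRR}, so $\dot h_n = b_n h_n$ and hence $(\log a_n^2)' = (\log h_n)' - (\log h_{n-1})' = b_n - b_{n-1}$.
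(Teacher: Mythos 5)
Your proposal is correct and follows essentially the paper's route: the heart of both arguments is the compatibility of \eqref{TodaAnsatz} with the recurrence \eqref{M3TRR}, equating the coefficients of $P_n$, $P_{n-1}$, $P_{n-2}$ to extract \eqref{Toda-b}, \eqref{Toda-a} and a trivial identity. The only real difference is how $C_n$ is pinned down: the paper keeps $C_n$ unknown, obtains the three relations \eqref{Tod1}--\eqref{Tod3}, and then deduces $C_n=-a_n^2$ from the telescoping of \eqref{Tod3} together with $C_1=-b_0'=-a_1^2$, whereas you identify $C_n=-a_n^2$ up front by differentiating $\int P_n(x;t)x^{n-1}\,d\mu_t(x)=0$; this is equally valid (and slightly cleaner, as it avoids the separate check $b_0'=a_1^2$), and your alternative derivation of \eqref{Toda-a} from $h_n'=b_nh_n$ is also a correct independent check.
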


\begin{proof}
If we take derivatives with respect to $t$ in the three-term recurrence relations \eqref{M3TRR}, then
\begin{multline*} 
    x \frac{d}{dt} P_n(x;t) = \frac{d}{dt} P_n(x;t) + b_n'(t) P_n(x;t) + b_n \frac{d}{dt} P_n(x) \\ 
   +\ (a_n^2)'(t) P_{n-1}(x;t)  + a_n^2 \frac{d}{dt} P_{n-1}(x;t). 
\end{multline*}
Use \eqref{TodaAnsatz} to find
\begin{multline*}
   xC_n P_{n-1}(x;t) = C_{n+1} P_n(x;t) + b_n' P_n(x;t) + b_n C_n P_{n-1}(x;t) \\
  +\ (a_n^2)' P_{n-1}(x;t) + a_n^2 C_{n-1} P_{n-2}(x;t).  
\end{multline*}
If we compare this with \eqref{M3TRR} (with $n$ shifted to $n-1$), then we find
\begin{eqnarray} 
      C_{n+1} + b_n' &=& C_n,   \label{Tod1} \\
      C_n(b_{n-1}-b_{n}) &=& (a_n^2)'  \label{Tod2} \\
      a_n^2 C_{n-1} &=& a_{n-1}^2 C_n.  \label{Tod3}
\end{eqnarray} 
From \eqref{Tod3} we find that $a_n^2/C_n$ does not depend on $n$, so that $a_n^2/C_n = a_1^2/C_1$
and from \eqref{Tod1} we find that $C_1=-b_0'(t)$. A simple exercise shows that $b_0'(t)= a_1^2(t)$ so that
$C_n(t) = -a_n^2(t)$. If we use this in \eqref{Tod2}, then we find \eqref{Toda-a}. If we use it in \eqref{Tod1}, then we find \eqref{Toda-b}. 
\end{proof}

The system \eqref{Toda-a}--\eqref{Toda-b} is closely related to a chain of interacting particles with exponential interaction with their
neighbors, introduced by Toda \cite{Toda} in 1967. If $x_n(t)$ is the position of particle $n$, then the Toda system of equations is
\[   x_n''(t) = \exp(x_{n-1}-x_n) - \exp(x_n-x_{n+1}).  \]
The relation with orthogonal polynomials was made by Flaschka \cite{FlashI,FlashII} and Manakov \cite{Mana}, who suggested the change of variables
\[    a_n(t) = \exp(-[x_n-x_{n-1}]/2), \quad   b_n = -x_n'(t) , \]
which gives the system \eqref{Toda-a}--\eqref{Toda-b}. 

If we are dealing with symmetric orthogonal polynomials, i.e., when the measure is symmetric and all the odd moments are zero,
then the three-term recurrence relation simplifies to
\begin{equation}  \label{SM3TRR}
   xP_n(x) = P_{n+1}(x) + a_n^2 P_{n-1}(x), \qquad n \geq 0.
\end{equation}
A symmetric modification of the measure is given by $d\mu_t(x) = e^{tx^2} \, d\mu(x)$ and the relation becomes
\begin{equation}   \label{LangmuirAnsatz}
  \frac{d}{dt} P_n(x;t) = C_n(t) P_{n-2}(x;t). 
\end{equation} 
The compatibility between \eqref{SM3TRR} and \eqref{LangmuirAnsatz} then gives:

\begin{theorem}[Langmuir lattice]
Let $\mu$ be a symmetric positive measure on $\mathbb{R}$ for which all the moments exist and let $\mu_t$ be the measure for which
$d\mu_t(x) = e^{tx^2} \, d\mu(x)$, where $t \in \mathbb{R}$ is such that all the moments of $\mu_t$ exist.
Then the recurrence coefficients of the orthogonal polynomials for $\mu_t$ satisfy the differential-difference equations
\begin{equation}  \label{Lang}
  \frac{d}{dt} a_n^2 = a_n^2 ( a_{n+1}^2 - a_{n-1}^2 ), \qquad   n \geq 1.  
\end{equation} 
\end{theorem}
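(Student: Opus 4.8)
The plan is to follow the two-step strategy used for the Toda equations: first establish the differentiation rule \eqref{LangmuirAnsatz}, then impose compatibility with the symmetric three-term recurrence \eqref{SM3TRR}. For the first step I would argue exactly as in the proof of \eqref{TodaAnsatz}: since $P_n(x;t)$ is monic of degree $n$, the polynomial $\frac{d}{dt}P_n(x;t)$ has degree $\le n-1$; moreover, because $\mu_t$ is symmetric we have $P_n(-x;t)=(-1)^nP_n(x;t)$, so $\frac{d}{dt}P_n(x;t)$ has parity $(-1)^n$ in $x$ and hence in fact has degree $\le n-2$. Differentiating the orthogonality relations $\int P_n(x;t)\,x^k e^{tx^2}\,d\mu(x)=0$ ($0\le k\le n-1$) in $t$ now produces the extra term $\int P_n(x;t)\,x^{k+2}e^{tx^2}\,d\mu(x)$, which vanishes for $k+2\le n-1$, i.e.\ $k\le n-3$. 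Thus $\frac{d}{dt}P_n(x;t)$ is a polynomial of degree $\le n-2$, of parity $(-1)^n$, orthogonal to $1,x,\dots,x^{n-3}$, hence a scalar multiple of $P_{n-2}(x;t)$; this is \eqref{LangmuirAnsatz}.

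For the second step I would differentiate \eqref{SM3TRR} in $t$, substitute \eqref{LangmuirAnsatz} for each $\frac{d}{dt}P_k$, and then apply \eqref{SM3TRR} once more to expand $xP_{n-2}(x;t)$. Matching coefficients of the linearly independent polynomials $P_{n-1}$ and $P_{n-3}$ yields
\[
   (a_n^2)' = C_n - C_{n+1}, \qquad C_n\,a_{n-2}^2 = a_n^2\,C_{n-1}.
\]
It remains to identify $C_n(t)$, which is the one non-routine point, playing the role of the identity $C_n=-a_n^2$ in the Toda case. I would obtain it directly: writing $\langle f,g\rangle_t=\int f(x)g(x)e^{tx^2}\,d\mu(x)$ and differentiating $\langle P_n,P_{n-2}\rangle_t=0$ in $t$, using \eqref{LangmuirAnsatz} (which kills the $\frac{d}{dt}P_{n-2}$ term, since $\langle P_n,P_{n-4}\rangle_t=0$), gives $C_n\|P_{n-2}\|_t^2 + \langle P_n,x^2P_{n-2}\rangle_t = 0$. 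Since $x^2P_{n-2}$ is monic of degree $n$, the recurrence \eqref{SM3TRR} gives $\langle P_n,x^2P_{n-2}\rangle_t=\|P_n\|_t^2$, and since $\|P_k\|_t^2=a_k^2\|P_{k-1}\|_t^2$ we conclude $C_n(t)=-\|P_n\|_t^2/\|P_{n-2}\|_t^2=-a_n^2(t)a_{n-1}^2(t)$ (with the convention $a_0^2=0$, which also covers $n=1$, where $P_1=x$ is independent of $t$). Substituting into $(a_n^2)'=C_n-C_{n+1}$ gives $(a_n^2)'=-a_n^2a_{n-1}^2+a_{n+1}^2a_n^2=a_n^2(a_{n+1}^2-a_{n-1}^2)$, which is \eqref{Lang}.

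The main obstacle is exactly this identification of $C_n(t)$, together with checking the low-index cases; the rest is bookkeeping parallel to the Toda proof. As a cross-check one can bypass the ansatz entirely: differentiating $\|P_n\|_t^2=\int P_n^2(x;t) e^{tx^2}\,d\mu(x)$ and using $\frac{d}{dt}P_n\perp P_n$ together with $\langle x^2P_n,P_n\rangle_t=(a_n^2+a_{n+1}^2)\|P_n\|_t^2$ (again from applying \eqref{SM3TRR} twice) gives $\frac{d}{dt}\log\|P_n\|_t^2=a_n^2+a_{n+1}^2$, whence $\frac{d}{dt}\log a_n^2=a_{n+1}^2-a_{n-1}^2$; but the compatibility route above is the one in keeping with the Lax-pair viewpoint of this section.
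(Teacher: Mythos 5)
Your proposal is correct, and its skeleton is the same Lax-pair/compatibility argument as in the paper: differentiating \eqref{SM3TRR}, inserting \eqref{LangmuirAnsatz} and matching coefficients gives exactly the paper's relations \eqref{Lang1} and \eqref{Lang2}. Where you genuinely diverge is in pinning down $C_n(t)$. The paper uses \eqref{Lang2} to conclude that $a_n^2a_{n-1}^2/C_n$ is independent of $n$ and then fixes the constant to $-1$ by computing $a_1^2$, $a_2^2$ and $(a_1^2)'$ explicitly in terms of the moments $m_0,m_2,m_4$; you instead differentiate $\int P_n P_{n-2}\,e^{tx^2}d\mu=0$ in $t$ to get $C_n=-\|P_n\|_t^2/\|P_{n-2}\|_t^2=-a_n^2a_{n-1}^2$ directly, which avoids the moment computation, treats all $n$ (including the low indices, where \eqref{Lang2} degenerates because $a_0^2=0$) uniformly, and makes the conclusion independent of the particular normalization check. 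You also supply a proof of the ansatz \eqref{LangmuirAnsatz} itself (degree bound plus the parity argument forcing degree $\le n-2$, plus the differentiated orthogonality relations), which the paper only asserts by analogy with \eqref{TodaAnsatz}; that is a worthwhile addition. Finally, note that your ``cross-check'' — $\frac{d}{dt}\log\|P_n\|_t^2=a_n^2+a_{n+1}^2$ from $\langle x^2P_n,P_n\rangle_t=(a_n^2+a_{n+1}^2)\|P_n\|_t^2$, then taking differences — is in fact a complete, self-contained and shorter proof of \eqref{Lang} that bypasses the ansatz and the compatibility relations entirely; the compatibility route buys the Lax-pair structure (the pair \eqref{SM3TRR}, \eqref{LangmuirAnsatz}) that the rest of the section exploits, e.g.\ when combining \eqref{Lang} with \eqref{dPIF} to reach Painlev\'e IV.
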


\begin{proof}
If we differentiate \eqref{SM3TRR} with respect to $t$ and then use \eqref{LangmuirAnsatz}, then we find
\[  xC_n P_{n-2}(x;t) = C_{n+1} P_{n-1}(x;t) + (a_n^2)' P_{n-1}(x;t) + a_n^2 C_{n-1} P_{n-3}(x;t).  \]
Comparing with \eqref{SM3TRR} (with $n$ replaced by $n-2$) gives
\begin{eqnarray}
     (a_n^2)' &=& C_n-C_{n+1},    \label{Lang1} \\
     a_n^2C_{n-1} &=& a_{n-2}^2 C_n.   \label{Lang2}
\end{eqnarray}
From \eqref{Lang2} it follows that $a_n^2a_{n-1}^2/C_n$ is constant and therefore equal to $a_2^2a_1^2/C_2$. Now $C_2(t)= -(a_1^2)'$
and one can easily compute $a_1^2, a_2^2$ and $(a_1^2)'$ in terms of the moments $m_0,m_2,m_4$ to find that $a_2^2a_1^2/C_2 = -1$, so
that $a_n^2a_{n-1}^2 = -C_n$. If one uses this in \eqref{Lang1}, then one finds \eqref{Lang}.
\end{proof}
This differential-difference equation is known as the Langmuir lattice or the Kac-van Moerbeke lattice. 
We will now illustrate this with a number of explicit examples.

\subsection{Discrete Painlev\'e I}
Let us consider orthogonal polynomials for the weight function $w(x)=e^{-x^4+tx^2}$ on $(-\infty,\infty)$.
The symmetry $w(-x)=w(x)$ of this weight function implies that the recurrence coefficients $b_n$ in \eqref{3TRR} or \eqref{M3TRR} vanish
and the three-term recurrence relation is \eqref{SM3TRR}. 
The orthogonal polynomials also have a nice differential property: the \textit{structure relation} is
\begin{equation}  \label{SR}
     P_n'(x) = A_nP_{n-1}(x) + C_nP_{n-3}(x),   
\end{equation}
for certain sequences $(A_n)_n$ and $(C_n)_n$.
Indeed, we can express $P_n'$ in terms of the orthogonal polynomials as
\[   P_n'(x) = \sum_{k=0}^{n-1} c_{n,k} P_k(x), \]
where
\[    c_{n,k} \int_{-\infty}^\infty P_k^2(x) e^{-x^4+tx^2}\, dx = \int_{-\infty}^\infty P_n'(x)P_k(x) e^{-x^4+tx^2}\, dx . \]
Using integration by parts gives
\begin{eqnarray*}
    c_{n,k}/\gamma_{k}^2 &=& - \int_{-\infty}^\infty P_n(x) \bigl( P_k(x) e^{-x^4+tx^2} \bigr)' \, dx \\
	    &=& - \int_{-\infty}^\infty P_n(x)P_k'(x) e^{-x^4 + tx^2} \, dx \\
    & & + \int_{-\infty}^\infty P_n(x) P_k(x) (4x^3-2tx) e^{-x^4+tx^2}\, dx, 
\end{eqnarray*} 	
and the last two integrals are zero for $0 \leq k < n-3$ by orthogonality, so that only $c_{n,n-1}$, $c_{n,n-2}$ and $c_{n,n-3}$ are left.
The symmetry of $w$ implies that $P_{2n}(x)$ is an even polynomial and $P_{2n+1}(x)$ is an odd polynomial for every $n$, hence $c_{n,n-2}=0$.
Taking $A_n=c_{n,n-1}$ and $C_n=c_{n,n-3}$ then gives the structure relation.

We now have a recurrence relation \eqref{SM3TRR} which describes the behavior of $P_n(x)$ in the (discrete) variable $n$, and a 
structure relation \eqref{SR} which describes the behavior of $P_n(x)$ in the (continuous) variable $x$.  
Both relations have to be compatible: if we differentiate \eqref{SM3TRR} and then use \eqref{SR} to replace
all the derivatives, then comparing coefficients of the polynomials $p_k$ gives the \textit{compatibility relations} 
\begin{equation}   \label{dPIF}
     4a_n^2 \left(a_{n+1}^2 + a_n^2 + a_{n-1}^2 - \frac{t}{2}\right) = n.   
\end{equation}
This simple non-linear recurrence relation is known as discrete Painlev\'e I ($\textrm{d-P}_{\scriptstyle \textrm{I}}$) and is a special case
of \eqref{d-P1} we gave earlier. 
This particular equation was
already in work of Shohat \cite{Shohat} in 1939, who extended earlier work of Laguerre \cite{Laguerre} from 1885.
Later it was obtained again by Freud \cite{Freud} in 1976, who was unaware of the work of Shohat. 
The special positive solution needed to get the recurrence coefficients was analyzed by Nevai \cite{Nevai} and Lew and Quarles \cite{LQ}.
An asymptotic expansion was found by M\'at\'e-Nevai-Zaslavsky \cite{MNZ}. 
Only later (in 1991) it was recognized as a discrete Painlev\'e equation by Fokas, Its and Kitaev \cite{FIK} who coined the name $\textrm{d-P}_{\scriptstyle \textrm{I}}$. 
Magnus \cite{Magnus} used the extra parameter $t$ and showed that, as a function of $t$, the recurrence coefficient $a_n(t)$ 
satisfies the differential equation Painlev\'e IV, as we will see later.

The discrete Painlev\'e equation \eqref{dPIF} easily allows to find the asymptotic behavior as $n \to \infty$:
\begin{theorem}[Freud]
The recurrence coefficients for the weight $w(x) = e^{-x^4+tx^2}$ on $(-\infty, \infty)$ satisfy
\[         \lim_{n \to \infty} \frac{a_n}{n^{1/4}} = \frac{1}{\sqrt[4]{12}}. \]
\end{theorem}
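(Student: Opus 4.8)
The plan is to use the discrete Painlev\'e I recurrence \eqref{dPIF} directly, after rescaling it so that the conjectured limit becomes a fixed point. Put $y_n = a_n^2/\sqrt n$; since $a_n>0$ and $a_n/n^{1/4} = \sqrt{y_n}$, the assertion is equivalent to $y_n \to 1/\sqrt{12}$. Dividing \eqref{dPIF} by $n$ and writing $\alpha_n^{\pm} = \sqrt{n\pm 1}/\sqrt n$ (so $\alpha_n^{\pm}\to 1$) turns it into
\[ 4y_n\Bigl(\alpha_n^{+}y_{n+1} + y_n + \alpha_n^{-}y_{n-1} - \tfrac{t}{2\sqrt n}\Bigr) = 1 . \]
If one can show that $L = \lim_n y_n$ exists and is positive, then passing to the limit here forces $4L\cdot 3L = 1$, i.e.\ $L = 1/\sqrt{12}$, which is the theorem.

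First I would prove the a priori bounds $0 < \liminf_n y_n \le \limsup_n y_n < \infty$. For the upper bound, discard the nonnegative terms $a_{n+1}^2$ and $a_{n-1}^2$ in \eqref{dPIF} to get $4a_n^4 \le n + 2|t|a_n^2$; solving this quadratic inequality in $a_n^2$ gives $a_n^2 \le \tfrac14\bigl(|t| + \sqrt{t^2+4n}\bigr)$, hence $\limsup_n y_n \le \tfrac12$ and, in particular, $a_k^2 \le C\sqrt k$ for all $k\ge 1$ with a constant $C=C(t)$. Reinserting this bound into \eqref{dPIF} shows the parenthesis there is $\le C'\sqrt n$, so $n \le 4C'a_n^2\sqrt n$ and therefore $y_n \ge 1/(4C')>0$. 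Write $s = \liminf_n y_n$ and $S = \limsup_n y_n$, so $0 < s \le S < \infty$.

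The decisive step is to upgrade this to $s = S$. Choose $n_k\to\infty$ with $y_{n_k}\to S$; passing to a subsequence we may assume $y_{n_k-1}\to A$ and $y_{n_k+1}\to B$, and $A,B$, being subsequential limits of $(y_n)$, satisfy $s\le A,B\le S$. Letting $k\to\infty$ in the rescaled recurrence yields $4S(A+S+B) = 1$, hence $1 \ge 4S(2s+S)$. Symmetrically, along a subsequence with $y_n\to s$ one gets $4s(A'+s+B')=1$ with $A',B'\in[s,S]$, hence $1 \le 4s(2S+s)$. Combining, $8Ss+4S^2 \le 1 \le 8Ss+4s^2$, so $S^2\le s^2$, i.e.\ $S\le s$; with $s\le S$ this gives $s=S=:L$. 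Then $(y_n)$ converges to $L>0$ and $12L^2=1$ as above, so $L = 1/\sqrt{12}$ and $a_n/n^{1/4}\to 12^{-1/4}$. The only genuine obstacle is the a priori estimate, and within it the lower bound on $y_n$, which needs the two-stage bootstrap (upper bound first, then reinsert it); once $0<s\le S<\infty$ is in hand, the subsequence comparison that pins $s=S$ is entirely soft.
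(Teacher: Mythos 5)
Your proof is correct and follows exactly the route the paper intends: the paper states Freud's theorem as an immediate consequence of the discrete Painlev\'e~I relation \eqref{dPIF} without writing out the argument, and your two-stage a priori bounds together with the $\liminf$/$\limsup$ comparison in the rescaled equation supply precisely the missing details (indeed, the lower bound on $y_n$ even comes for free from the $\liminf$ identity once the upper bound is known). Nothing needs to be fixed.
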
 

Observe that \eqref{dPIF} is a second order recurrence relation, so one needs two initial conditions $a_0$ and $a_1$ to generate
all the recurrence coefficients. It turns out that the recurrence coefficients are a special solution with $a_0=0$ for which all
$a_n$ are positive for $n \geq 1$. This means that there is only one special initial value $a_1$ that gives a positive solution.
Put $x_n = a_n^2$, then (for $t=0$)
\begin{equation}  \label{dPIx}
     x_n(x_{n+1}+x_n + x_{n-1}) = an, \qquad  a=1/4.  
\end{equation}

\begin{theorem}[Lew and Quarles, Nevai]
There is a unique solution of  \eqref{dPIx} for which $x_0=0$ and $x_n >0$ for all $n \geq 1$.
\end{theorem}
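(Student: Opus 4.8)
The statement is about the positive solution of the Freud / discrete-Painlev\'e-I recurrence, and the standard route is a shooting argument. Since $x_0=0$ is fixed, a choice of the free parameter $s:=x_1>0$ determines the sequence through $x_{n+1}=an/x_n-x_n-x_{n-1}$ for as long as the iterates stay positive; write $x_n(s)$ for this sequence and $U_n=\{s>0:x_1(s),\dots,x_n(s)>0\}$ for the ``survival sets''. A solution of the required type corresponds exactly to a point of $\bigcap_{n\ge 1}U_n$, so the task splits into showing this intersection is nonempty (existence) and reduces to a single point (uniqueness). Along the way I will use the elementary a~priori bound $x_n<\sqrt{an}$, which follows for a positive solution from $x_n^2<x_n(x_{n+1}+x_n+x_{n-1})=an$.

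For existence, the heart of the matter is a monotonicity lemma proved by induction on $n$: for $n\ge 2$ the set $U_n$ is a \emph{nonempty open interval} whose two endpoints are the zeros of $x_{n-1}(\cdot)$ and of $x_n(\cdot)$, and on $U_n$ the partial derivative $\partial_s x_k$ has sign $(-1)^{k-1}$ for $1\le k\le n$ while $|\partial_s x_1|<|\partial_s x_2|<\dots<|\partial_s x_n|$. The cases $U_1=(0,\infty)$ and $U_2=(0,\sqrt a)$ with $x_2=a/s-s$ give the base step. For the inductive step, differentiating the recurrence gives $\partial_s x_{n+1}=-(an/x_n^2+1)\,\partial_s x_n-\partial_s x_{n-1}$; the two terms on the right have opposite signs, and since $an/x_n^2+1>1$ and $|\partial_s x_n|>|\partial_s x_{n-1}|$ the first term dominates in magnitude. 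Hence $\partial_s x_{n+1}$ has the fixed sign $(-1)^n$ on all of $U_n$, so $x_{n+1}$ is strictly monotone on $U_n$. A short computation with the one-sided limits shows $x_{n+1}\to+\infty$ at the endpoint of $U_n$ where $x_n$ vanishes and $x_{n+1}\to-\infty$ at the endpoint where $x_{n-1}$ vanishes (there $x_n\to+\infty$), so $x_{n+1}$ has a unique zero in $U_n$; this cuts out $U_{n+1}$ as a nonempty subinterval with the claimed endpoints. Finally, on $U_{n+1}$ one has $x_n^2<an$, hence $an/x_n^2+1>2$, which upgrades the derivative estimate to $|\partial_s x_{n+1}|=(an/x_n^2+1)|\partial_s x_n|-|\partial_s x_{n-1}|>|\partial_s x_n|$ and closes the induction. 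It follows that the left endpoints of the $U_n$ (zeros of the odd-index iterates) increase to some $\alpha^*$, the right endpoints (zeros of the even-index iterates) decrease to some $\beta^*\ge\alpha^*$, and $\bigcap_n U_n=[\alpha^*,\beta^*]\neq\emptyset$; any $s$ in this set yields a positive solution.

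For uniqueness, let $(x_n)$ and $(\tilde x_n)$ be two such solutions and set $\delta_n=x_n-\tilde x_n$, so $\delta_0=0$. Subtracting the recurrences written as $x_{n+1}=an/x_n-x_n-x_{n-1}$ gives the linear relation $\delta_{n+1}+\delta_{n-1}=-c_n\,\delta_n$ with $c_n=1+an/(x_n\tilde x_n)$, and $x_n,\tilde x_n<\sqrt{an}$ forces $c_n>2$. Writing $\epsilon_n=(-1)^{n-1}\delta_n$ turns this into $\epsilon_{n+1}=c_n\epsilon_n-\epsilon_{n-1}$; assuming without loss of generality $\epsilon_1>0$, an easy induction gives $0<\epsilon_{n-1}<\epsilon_n$, and moreover that the successive increments $\epsilon_{n+1}-\epsilon_n$ are themselves increasing, whence $\epsilon_n\ge n\,\epsilon_1$ for every $n$. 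This contradicts $|\delta_n|=\epsilon_n<\sqrt{an}$, so $\epsilon_1=0$, i.e.\ $x_1=\tilde x_1$ and the two sequences coincide. (In particular $\alpha^*=\beta^*$, so $\bigcap_n U_n$ is a single point.)

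The step I expect to be the real obstacle is the monotonicity lemma. One must carefully bookkeep which iterate vanishes at which endpoint of $U_n$ in order to pin down the signs of the one-sided limits of $x_{n+1}$, and it is essential to state the sign/size invariant for the $s$-derivatives on $U_{n+1}$ rather than on $U_n$ --- the bound $x_n^2<an$, which is what makes $an/x_n^2+1>2$, is available only once the next positivity condition has been imposed --- so that the induction genuinely closes. The remaining ingredients, the a~priori bounds and the linear difference inequality, are routine.
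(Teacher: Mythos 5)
Your proof is correct, and its two halves check out: the sign/size induction for $\partial_s x_k$ does close in the order you describe (the sign of $\partial_s x_{n+1}$ needs only $an/x_n^2+1>1$ and so holds on all of $U_n$, while the upgrade $an/x_n^2+1>2$ is used only where $x_{n+1}>0$ has been imposed), the one-sided limits $\pm\infty$ at the two endpoints are as claimed because the zero of $x_{n}$ defining an endpoint of $U_{n+1}$ lies strictly inside $U_{n-1}$ where the earlier iterates stay finite, and the uniqueness step via $\epsilon_{n+1}=c_n\epsilon_n-\epsilon_{n-1}$ with $c_n>2$ and the a priori bound $0<x_n,\tilde x_n<\sqrt{an}$ is airtight. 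It is, however, a genuinely different route from the one this survey has in mind: the paper does not prove the theorem at all, it cites Lew--Quarles and Nevai and, in the discussion right after the statement, points to \cite[\S 2.3]{WVA3}, where the positive solution is characterized as the unique fixed point of a contraction on an appropriate normed space of infinite sequences, so that existence and uniqueness come simultaneously from the Banach fixed point theorem. What that approach buys is exactly what the survey emphasizes: a constructive and numerically stable algorithm for the recurrence coefficients (iterating the contraction), together with quantitative control useful for asymptotics, in contrast with the violently unstable forward recursion from $(x_0,x_1)$. What your shooting argument buys is self-containedness: nothing beyond calculus and the intermediate value theorem, with the nested survival intervals giving existence and the linear-growth-versus-$\sqrt{an}$ argument giving uniqueness (the latter is essentially the classical Lew--Quarles uniqueness argument); moreover your monotonicity lemma, which shows $\lvert\partial_s x_n\rvert$ is strictly increasing along the surviving initial values, is a nice quantitative explanation of the sensitivity to $x_1$ that the paper only states informally. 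Two small wording points: the ``zeros of $x_{n-1}$ and $x_n$'' serving as endpoints of $U_n$ should be understood as the particular zeros adjacent to $U_n$ (these rational functions have other zeros on $(0,\infty)$), and in the uniqueness step the bound $\lvert\delta_n\rvert<\sqrt{an}$ holds because both solutions lie in the interval $(0,\sqrt{an})$, which is clearly what you intended.
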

Hence one should not use this recurrence relation \eqref{dPIx} to generate the recurrence coefficients starting from
$x_0=0$ and $x_1$, because a small error in $x_1$ will produce a sequence for which not all the terms are positive. A small
perturbation in the initial condition $x_1$ has a very important effect on the solution as $n \to \infty$. This is not unusual
for non-linear recurrence relations. Instead it is better to generate the positive solution by using a fixed point algorithm,
because the positive solution turns out to be the fixed point of a contraction in an appropriate normed space of infinite sequences.
See, e.g., \cite[\S 2.3]{WVA3}.

\subsection{Langmuir lattice and Painlev\'e IV}
We will modify the measure $\mu$ by multiplying it with the symmetric function $e^{tx^2}$, where $t$ is a real parameter. This gives
the Langmuir lattice \eqref{Lang}. 
We can combine this with the discrete Painlev\'e equation \eqref{dPIF} to find a differential equation for $a_n^2(t)$ as
a function of the variable $t$.
Put $a_n^2 = x_n$, then
\begin{eqnarray}
     n &=& 4x_n(x_{n+1}+x_n+x_{n-1}-t/2) ,  \label{een} \\
     {x}_n' &=&   x_n(x_{n+1}-x_{n-1}) ,  \label{twee}
\end{eqnarray}
where the $'$ denotes the derivative with respect to $t$.
Differentiate \eqref{twee} to find
\[    x_n'' = x_n'(x_{n+1}-x_{n-1}) + x_n(x_{n+1}'-x_{n-1}').  \]
Replace $x_{n+1}'$ and $x_{n-1}'$ by \eqref{twee}, then
\[   x_n'' = x_n'(x_{n+1}-x_{n-1}) + x_n \Bigl( x_{n+1}(x_{n+2}-x_n) - x_{n-1}(x_n - x_{n-2}) \Bigr). \]
Eliminate $x_{n+1}$ and $x_{n-1}$ using \eqref{een}--\eqref{twee} to find
\[  {x}_n'' = \frac{({x}_n')^2}{2x_n} + \frac{3x_n^3}{2} - t x_n^2 + x_n \left( \frac{n}2 + \frac{t^2}{8} \right)
    - \frac{n^2}{32x_n}.  \]
This is Painlev\'e IV if we use the transformation $2x_n(t) = y(-t/2)$. This means that Painlev\'e IV has a solution
which can be described completely in terms of the moments of $w(x)=e^{-x^4+tx^2}$, since $a_n^2 = \gamma_{n-1}^2/\gamma_n^2$
and by \eqref{gammaD} $\gamma_n^2 = D_n/D_{n+1}$, where $D_n$ is the Hankel determinant \eqref{Hankeldet} containing the moments.
Notice that all the odd moments $m_{2n+1}$ are zero, and for the even moments one has
\[   m_{2n} = \int_{\mathbb{R}} x^{2n} e^{-x^4+tx^2}\, dx = \frac{d^n}{dt^n} m_0. \]
Hence the special solution $a_n^2(t)$ of Painlev\'e IV
is in terms of $m_0(t)$ only, and this is a special function:
\[     m_0(t) = \int_{-\infty}^\infty e^{-x^4+tx^2}\, dx = 2^{-1/4} \sqrt{\pi} e^{t^2/8} D_{-1/2}(-\sqrt{t/2}), \]
where $D_{-1/2}$ is a parabolic cylinder function.

\subsection{Singularity confinement}
In this section we will explain the notion of singularity confinement for the discrete Painlev\'e I equation
\[     4x_n (x_{n+1}+x_n+x_{n-1}) = n.   \]
From this equation one finds
\[     x_{n+1} = \frac{n}{4x_n} - x_n - x_{n-1}.  \]
If $x_n=0$ then $x_{n+1}$ becomes infinite. This need not be a problem, but problems arise later when we have to add or subtract infinities.
So we need to be careful and suppose that $x_n=\epsilon$ is small. Then
\[   x_{n+1} = \frac{n}{4\epsilon} - \epsilon -x_{n-1}, \]
and
\[  x_{n+2} = -\frac{n}{4\epsilon} +x_{n-1} + \epsilon + \mathcal{O}(\epsilon^2), \]
and 
\[ x_{n+3} = - \epsilon + \mathcal{O}(\epsilon^2), \]
and one more
\[  x_{n+4} = x_{n-1} + \frac{2-8 x_{n-1}^2}{n} \epsilon + \mathcal{O}(\epsilon^2), \]
and for $\epsilon \to 0$ we see that $x_{n+4}$ is finite again and recovers the value $x_{n-1}$ we had before we started to get singularities.
The singularities are confined to $x_{n+1}$ and $x_{n+2}$ and one can continue the recurrence relation from $x_{n+4}$. 
This has some meaning in terms of the orthogonal polynomials for the weight $e^{-x^4}$, but we have to consider this weight on the set
$\mathbb{R} \cup i \mathbb{R}$
and look for orthogonal polynomials $(R_n)_n$ for which
\[   \alpha \int_{-\infty}^\infty R_n(x)R_m(x) e^{-x^4}\, dx + \beta \int_{-i\infty}^{+i\infty} R_n(x)R_m(x) e^{-x^4}\, |dx| = 0, \quad
n \neq m, \]
with $\alpha, \beta >0$. They satisfy the recurrence relation
\[    xR_n(x) = R_{n+1}(x) + c_n R_{n-1}(x)  \]
and the recurrence coefficients $(c_n)_n$ still satisfy \eqref{dPIx} but with initial condition $c_0=0$ and 
$c_1=\frac{(\alpha-\beta) m_2}{(\alpha+\beta)m_0}$. 
If $\alpha=\beta$ then $c_1=0$ generates a singularity for $\textrm{d-P}_{\small \textrm{I}}$ and gives $c_2 = \infty$, hence $R_3$ does not exist
if we define it using \eqref{detP}.
The singularity, however, is confined to a finite number of terms. We have

\begin{property}
For $\alpha=\beta$ one has $D_{4n-1}=D_{4n-2}=0$ for the Hankel determinants, so that $R_{4n-1}$ and $R_{4n-2}$ as defined by
\eqref{detP} do not exist for $n\geq 1$.
Furthermore
\[    R_{4n}(x) = r_n(x^4), \quad  R_{4n+1}(x) = xs_n(x^4).   \]
\end{property}
The polynomials $r_n$ and $s_n$ can be identified as Laguerre polynomials with parameter $\alpha=-3/4$ and $\alpha=1/4$ respectively.
The problem with $R_{4n-1}$ and $R_{4n-2}$ is not so much that they do not exist, but rather that they are not unique.
\medskip

\noindent\shadowbox{\parbox{12cm}{
\begin{exercise}
Show that for every $a\in \mathbb{R}$ the polynomials $(x^2+ax)s_n(x^4)$ are monic polynomials of degree $4n+2$ that are orthogonal to $x^k$ for $0 \leq k \leq 4n+1$, so that the monic orthogonal polynomial $R_{4n+2}$ is not unique.
In a similar way $(x^3+ax^2+bx)s_n(x^4)$ are monic polynomials of degree $4n+3$ that are orthogonal to $x^k$ for $0 \leq k \leq 4n+2$ for every
$a,b\in \mathbb{R}$ so that the monic orthogonal polynomial $R_{4n+3}$ is not unique. 
\end{exercise}}}

\subsection{Generalized Charlier polynomials}
Our next example is a family of discrete orthogonal polynomials $P_n(x)$, which satisfy
\[    \sum_{k=0}^\infty  P_n(k)P_m(k) \frac{c^k}{(\beta)_k k!} = 0, \qquad n \neq m.  \]
Without the factor $(\beta)_k$ the polynomials are the Charlier polynomials, but with the factor $(\beta)_k$ we have a semiclassical family
of discrete orthogonal polynomials. The case $\beta=1$ was investigated in \cite{MamaVA} and the general case in \cite{SmetVA}, 
see also \cite[\S 3.2]{WVA3}. The structure relation for discrete orthogonal polynomials is now in terms of a difference operator instead of
a differential operator. For these generalized Charlier polynomials it is
\begin{equation}   \label{strucGC}
   \Delta P_n(x) = A_n P_{n-1}(x) + B_n P_{n-2}(x), 
\end{equation}
where $\Delta$ is the forward difference operator acting on a function $f$ by 
\[  \Delta f(x) = f(x+1)-f(x), \]
and $(A_n)_n$ and $(B_n)_n$ are certain sequences. If one works out the compatibility of \eqref{M3TRR} and \eqref{strucGC}, then one finds 
\begin{eqnarray*}
    b_n+b_{n-1} -n+\beta  &=& \frac{cn}{a_n^2}, \\
    (a_{n+1}^2-c)(a_n^2-c) &=&  c (b_n-n)(b_n-n+\beta-1).
\end{eqnarray*}  
This corresponds to a limiting case of discrete Painlev\'e with surface/symmetry $D_4^{(1)}$ in Sakai's classification. 

If we put $c=c_0e^t$, then the weights with parameter $c$ are a Toda modification of the weights with parameter $c_0$,   
\[   \frac{c^k}{(\beta)_k k!} = e^{tk} \frac{c_0^k}{(\beta)_k k!} , \]
and hence the recurrence coefficients satisfy the Toda equations given in Theorem \ref{thm:Toda}. 
Put $x_n(t)=a_n^2$ and $y_n(t)=b_n$, then
\begin{eqnarray*}
    (x_n-c)(x_{n+1}-c) &=& c(y_n-n)(y_n-n+\beta-1), \\
    y_n+y_{n-1}-n+\beta &=& \frac{cn}{x_n}, 
\end{eqnarray*}
and if $x_n'=dx_n/dc$, $y_n'=dy_n/dc$, the Toda lattice equations are
\begin{eqnarray*}
            cx_n' &=& x_n(y_n-y_{n-1}), \\
            cy_n' &=& x_{n+1}-x_n.
\end{eqnarray*}
Eliminate $y_{n-1}$ and $x_{n+1}$ (this requires quite a few computations)
and put $x_n = \frac{c}{1-y}$, then $y(c)$ satisfies (after even more computations)  
\[  y'' = \frac12 \left( \frac{1}{2y} + \frac{1}{y-1} \right) (y')^2 - \frac{y'}{c} + \frac{(1-y)^2}{c^2} \left( \frac{n^2y}{2} - \frac{(\beta-1)^2}{2y}
  \right) - \frac{2y}{c} . \]
This is a Painlev\'e V differential equation as in \eqref{P5} with $\delta=0$. Such an equation can always be transformed to Painlev\'e III.  

\subsection{Discrete Painlev\'e II}
We will now give an example of a family of orthogonal polynomials on the unit circle, for which the recurrence coefficients satisfy
a discrete Painlev\'e equation. 
Orthogonal polynomials on the unit circle (OPUC) are defined by the orthogonality relations
\[   \frac{1}{2\pi} \int_{0}^{2\pi} \varphi_n(z) \overline{\varphi_m(z)} v(\theta)\, d\theta = \delta_{m,n}, \qquad
     z=e^{i\theta}, \quad \varphi_n(z)=\kappa_n z^n + \cdots  \]
where $\kappa_n > 0$. We denote the monic polynomials by $\Phi_n = \varphi_n/\kappa_n$. They satisfy a nice recurrence relation
\begin{equation}   \label{RRopuc}
     z\Phi_n(z) = \Phi_{n+1}(z) + \overline{\alpha_n} \Phi_{n}^*(z), 
\end{equation}
where $\Phi_n^*(z) = z^n \overline{\Phi}_n(1/z)$ is the reversed polynomial. The recurrence coefficients
$\alpha_n = -\overline{\Phi_{n+1}(0)}$ are nowadays known as \textit{Verblunsky coefficients}, but earlier they were also known
as Schur parameters or reflection coefficients.
Let $v(\theta) = e^{t\cos \theta}$ for $\theta \in [-\pi,\pi]$. The trigonometric moments for this weight function are modified Bessel functions
\[    \frac{1}{2\pi} \int_0^{2\pi} e^{in\theta} v(\theta)\, d\theta = I_n(t),  \] 
which is why Ismail \cite[Example 8.4.3]{Ismail} calls them \textit{modified Bessel polynomials}.
The symmetry $v(-\theta)=v(\theta)$ implies that $\alpha_n(t)$ are real-valued. If we write
\[   v(\theta) = \hat{v}(z), \quad    z=e^{i\theta}, \]
then 
\[   \hat{v}(z)=\exp \left( t \frac{z+\frac1z}{2} \right),  \]
and this function satisfies the Pearson equation
\[    \hat{v}'(z) = \frac{t}{2} \left( 1 - \frac{1}{z^2} \right) \hat{v}(z) .  \]
As a consequence the orthogonal polynomials satisfy a structure relation:
\begin{property}
The monic orthogonal polynomials for $v(\theta) = e^{t\cos \theta}$ satisfy
\begin{equation}   \label{SRopuc}
       \Phi_n'(z) = n \Phi_{n-1}(z) + B_n \Phi_{n-2}(z), 
\end{equation}
for some sequence $(B_n)_n$. In fact, one has
\[    B_n = \frac{t}{2} \frac{\kappa_{n-2}^2}{\kappa_n^2} .  \]
\end{property}
We now have two equations: the recurrence relation \eqref{RRopuc} and the structure relation \eqref{SRopuc}, and we can check their compatibility.
They will be compatible if the recurrence coefficients satisfy the following non-linear relation:

\begin{theorem}[Periwal and Shevitz \cite{PS}]
The Verblunsky coefficients for the weight $v(\theta) = e^{t\cos \theta}$ satisfy
\[    - \frac{t}{2} (\alpha_{n+1}+\alpha_{n-1}) = \frac{(n+1)\alpha_n}{1-\alpha_n^2}, \]
with initial values
\[   \alpha_{-1} = -1, \quad \alpha_0 = \frac{I_1(t)}{I_0(t)}.  \]
\end{theorem}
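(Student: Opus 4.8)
The plan is to run the compatibility argument between the Szegő recurrence \eqref{RRopuc} and the structure relation \eqref{SRopuc}, exactly as announced just before the statement. First I would collect the standard background facts I will need: the reversed Szegő recursion $\Phi_{n+1}^*(z) = \Phi_n^*(z) - \alpha_n z\Phi_n(z)$, the norm identity $\kappa_n^2/\kappa_{n-1}^2 = (1-|\alpha_{n-1}|^2)^{-1}$, and the observation that the symmetry $v(-\theta)=v(\theta)$ forces all $\alpha_n$ (hence all $B_n$ and $\kappa_n$) to be real, so that conjugations may be dropped. Applying the norm identity twice turns the given formula $B_n = \frac{t}{2}\kappa_{n-2}^2/\kappa_n^2$ into
\[ B_n = \frac{t}{2}\,(1-\alpha_{n-1}^2)(1-\alpha_{n-2}^2). \]

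Next I would derive the analogue of \eqref{SRopuc} for the reversed polynomials. Starting from $\Phi_n^*(z) = z^n\Phi_n(1/z)$, differentiating, substituting \eqref{SRopuc} for $\Phi_n'(1/z)$, and using $z^{-1}\bigl(\Phi_n^*(z)-\Phi_{n-1}^*(z)\bigr) = -\alpha_{n-1}\Phi_{n-1}(z)$ — which is the reversed Szegő recursion divided by $z$, legitimate since $\Phi_n^*(0)=\Phi_{n-1}^*(0)=1$ — one obtains the clean relation
\[ (\Phi_n^*)'(z) = -\,n\,\alpha_{n-1}\Phi_{n-1}(z) - B_n\,\Phi_{n-2}^*(z), \]
whose leading coefficient $-n\alpha_{n-1}$ matches $n\overline{\Phi_n(0)}$, a useful sanity check.

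The core step is to differentiate \eqref{RRopuc} with respect to $z$, giving $\Phi_n + z\Phi_n' = \Phi_{n+1}' + \alpha_n(\Phi_n^*)'$, and to eliminate every derivative: use \eqref{SRopuc} for $\Phi_n'$ and $\Phi_{n+1}'$, the displayed relation for $(\Phi_n^*)'$, and then the two Szegő recursions in shifted form to rewrite $z\Phi_{n-1}$, $z\Phi_{n-2}$ and $\Phi_{n-1}^*$ in the basis $\{\Phi_{n-1},\Phi_{n-2}^*\}$. The $(n+1)\Phi_n$ terms cancel, and since $\Phi_{n-1}$ and $\Phi_{n-2}^*$ have distinct degrees they are linearly independent, so both coefficients must vanish. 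The coefficient of $\Phi_{n-1}$ yields $B_{n+1}-B_n = n\,\alpha_{n-1}(\alpha_n-\alpha_{n-2})$, a relation consistent with \eqref{RRopuc}; the coefficient of $\Phi_{n-2}^*$ yields
\[ B_n\,(\alpha_n+\alpha_{n-2}) = -\,n\,\alpha_{n-1}\,(1-\alpha_{n-2}^2). \]
Inserting $B_n = \frac{t}{2}(1-\alpha_{n-1}^2)(1-\alpha_{n-2}^2)$, cancelling the factor $1-\alpha_{n-2}^2$ (nonzero since $v$ is strictly positive, so $|\alpha_n|<1$), and replacing $n$ by $n+1$ gives exactly
\[ -\frac{t}{2}(\alpha_{n+1}+\alpha_{n-1}) = \frac{(n+1)\alpha_n}{1-\alpha_n^2}. \]
The initial data then follow from $\Phi_0=1$, whence $\alpha_{-1}=-\overline{\Phi_0(0)}=-1$, and from the single orthogonality condition $\langle\Phi_1,1\rangle=0$, which together with the trigonometric moments $I_k(t)$ gives $\Phi_1(0) = -I_1(t)/I_0(t)$ and hence $\alpha_0 = I_1(t)/I_0(t)$.

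I expect the main obstacle to be purely the bookkeeping with reversed polynomials: getting $(\Phi_n^*)'$ right, in particular handling the $z^{-1}$ factor correctly, and then reducing the mixed expression in $\Phi_{n-1}$, $\Phi_{n-1}^*$, $\Phi_{n-2}^*$ to a genuine two-element basis before comparing coefficients. The only point of rigour beyond routine algebra is the cancellation of the factor $1-\alpha_{n-2}^2$, which is justified by strict positivity of the weight.
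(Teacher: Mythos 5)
Your derivation is correct and is exactly the compatibility argument between \eqref{RRopuc} and \eqref{SRopuc} that the paper indicates but does not write out (it cites Periwal--Shevitz and \cite{WVA3} instead); the intermediate identities, including $(\Phi_n^*)'(z) = -n\alpha_{n-1}\Phi_{n-1}(z) - B_n\Phi_{n-2}^*(z)$, the reduction to the basis $\{\Phi_{n-1},\Phi_{n-2}^*\}$, and the two coefficient relations, all check out, as do the initial values. The only small addendum is that your computation requires $n+1\geq 2$, so the $n=0$ instance of the stated equation (with the convention $\alpha_{-1}=-1$) should be checked directly; it reduces to the Bessel recurrence $I_0(t)-I_2(t)=\tfrac{2}{t}\,I_1(t)$.
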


Let $x_{n} = \alpha_{n-1}$, then
\begin{equation}   \label{dPII}
     x_{n+1} + x_{n-1} = \frac{\alpha n x_n}{1-x_n^2}  , \qquad \alpha = -\frac{2}{t},
\end{equation}
and this is a particular case of discrete Painlev\'e II ($\textrm{d-P}_{\scriptstyle \textrm{II}}$) given in \eqref{d-P2}.
We need a solution with $x_0=-1$ and $|x_n| < 1$ for $n \geq 1$, because for Verblunsky coefficients one always has $|\alpha_n| < 1$.
Such a solution is unique.

\begin{theorem}
Suppose $\alpha > 0$. Then there is a unique solution of \eqref{dPII} for which
$x_0=-1$ and $-1 < x_n < 1$. 
The solution corresponds to $x_1=I_1(-2/\alpha)/I_0(-2/\alpha)$ and is negative for every $n \geq 0$.
\end{theorem}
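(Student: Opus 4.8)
The plan is to obtain existence, together with the initial value $x_1=I_1(-2/\alpha)/I_0(-2/\alpha)$, from the theory of orthogonal polynomials on the unit circle, and to prove uniqueness and negativity by a direct analysis of the recurrence \eqref{dPII}. Write $f(x)=x/(1-x^2)$; it is an increasing bijection of $(-1,1)$ onto $\mathbb{R}$ with inverse $f^{-1}(y)=2y/(1+\sqrt{1+4y^2})$, and one checks that $|f^{-1}(y)|\le\min(|y|,1)$, that $f^{-1}$ is $1$-Lipschitz and sign preserving, and that $f'(\xi)=(1+\xi^2)/(1-\xi^2)^2\ge1$ on $(-1,1)$. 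With this notation \eqref{dPII} reads $f(x_n)=(x_{n+1}+x_{n-1})/(\alpha n)$.

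\emph{Existence and the initial value.} Take $t=-2/\alpha<0$, so that $v(\theta)=e^{t\cos\theta}$ is continuous and strictly positive. The monic orthogonal polynomials on the unit circle for $v$ exist, their Verblunsky coefficients satisfy $|\alpha_n|<1$ for all $n$, and $\alpha_{-1}=-1$, $\alpha_0=I_1(t)/I_0(t)$; by the Periwal--Shevitz recurrence just derived, $x_n:=\alpha_{n-1}$ solves \eqref{dPII} with $x_0=-1$ and $x_n\in(-1,1)$ for $n\ge1$, and $x_1=\alpha_0=I_1(-2/\alpha)/I_0(-2/\alpha)$, which is negative since $t<0$ makes $I_1(t)<0$ while $I_0(t)>0$. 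Next, for \emph{any} solution of \eqref{dPII} with $x_0=-1$ and $x_m\in(-1,1)$ for all $m\ge1$ one has $\alpha n\,|f(x_n)|=|x_{n+1}+x_{n-1}|<2$, hence the a priori bound
\[ |x_n|\le|f(x_n)|<\frac{2}{\alpha n},\qquad n\ge1; \]
in particular such a solution tends to $0$ and is fully determined by $x_1$.

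\emph{Uniqueness.} Let $x,\tilde x$ be two solutions as above and set $\delta_n=x_n-\tilde x_n$, so $\delta_0=0$ and, by the mean value theorem, $\delta_{n+1}=c_n\delta_n-\delta_{n-1}$ with $c_n=\alpha n\,f'(\xi_n)\ge\alpha n$ for some $\xi_n$ between $x_n$ and $\tilde x_n$, while $|\delta_n|<4/(\alpha n)$. Fix $N$ with $\alpha N\ge2$. A short induction shows that once, for some $n\ge N$, one has $\delta_n\neq0$ together with $|\delta_n|\ge|\delta_{n-1}|$, both properties persist for all larger indices and $|\delta_{n+1}|\ge(c_n-1)|\delta_n|\ge(\alpha n-1)|\delta_n|$, forcing $|\delta_n|\to\infty$; a sign change of $\delta$ at an index $\ge N$ likewise leads into this regime. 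The only remaining possibility, that $|\delta_n|$ is strictly decreasing of constant sign for all large $n$, forces $\delta_{n-1}/\delta_n\in(c_n-1,c_n]$, hence $|\delta_n|$ to decay like $\prod_k c_k^{-1}\asymp(\alpha^n n!)^{-1}$; this must be excluded using $\delta_0=0$ (a solution of a second order recurrence normalized by $\delta_0=0$ cannot decay faster than the minimal solution, which does not vanish at $0$). In every case one contradicts either $|\delta_n|\to0$ or boundedness unless $\delta_1=0$, and then $\delta\equiv0$; so the solution is unique, and in particular it is the one identified above, with $x_1=I_1(-2/\alpha)/I_0(-2/\alpha)$.

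\emph{Negativity and the main obstacle.} A solution with $x_0=-1$ and values in $(-1,1)$ is a fixed point of $(Tx)_n=f^{-1}\bigl((x_{n+1}+x_{n-1})/(\alpha n)\bigr)$, $n\ge1$, $x_0:=-1$. Since $\alpha>0$ and $f^{-1}$ is sign preserving, $T$ maps the compact convex set $C=\{(x_n)_{n\ge1}:x_n\in[-1,0]\}$ (product topology) continuously into itself, so by the Schauder--Tychonoff theorem $T$ has a fixed point in $C$, i.e.\ a solution with all $x_n\le0$; by uniqueness it is the solution under discussion. If $x_n=0$ for some $n\ge1$, then $x_{n+1}+x_{n-1}=0$ with both terms $\le0$ gives $x_{n+1}=x_{n-1}=0$, and running \eqref{dPII} backwards yields $x_0=0$, a contradiction; hence $x_n<0$ for all $n\ge1$, and $x_0=-1<0$. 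The real work, and the step most likely to cause trouble, is the uniqueness argument: turning ``$c_n\ge\alpha n$'' and the a priori decay into $\delta\equiv0$ requires careful bookkeeping of the sign pattern of $(\delta_n)$ and of the minimal solution of the linearized recurrence, since a naive Banach contraction for $T$ in the sup norm only closes when $\alpha>1$; the remaining ingredients are routine.
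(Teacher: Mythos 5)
Your existence step is fine: the Verblunsky coefficients of the positive weight $e^{t\cos\theta}$ with $t=-2/\alpha<0$ give a solution of \eqref{dPII} with $x_0=-1$, $|x_n|<1$, and $x_1=I_1(-2/\alpha)/I_0(-2/\alpha)<0$, and your a priori bound $|x_n|<2/(\alpha n)$ is correct. The Schauder--Tychonoff fixed point in $\{x_n\in[-1,0]\}$ and the exclusion of zeros are also sound, but note that they only produce \emph{some} nonpositive solution; to conclude that \emph{the} solution is negative you invoke uniqueness, so everything hinges on that step. Be aware that the paper itself does not prove this theorem: it cites \cite[\S 3.3]{WVA3} for $\alpha>1$ (where a contraction argument closes, as you observe) and explicitly states that a proof for $0<\alpha\leq 1$ has not been published. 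So any complete argument must do genuinely new work precisely in the uniqueness step.

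That is where your proof has a real gap. Writing $\delta_n=x_n-\tilde x_n$, your analysis correctly reduces matters to the residual case in which $\delta_n$ has constant sign, $|\delta_n|$ is strictly decreasing, and $\delta_{n-1}/\delta_n\in(c_n-1,c_n)$, so that $\delta_n$ decays roughly like $\prod_k c_k^{-1}$, i.e.\ behaves like a minimal solution of the linearized recurrence $\delta_{n+1}=c_n\delta_n-\delta_{n-1}$. Your exclusion of this case rests entirely on the parenthetical claim that a solution with $\delta_0=0$ cannot decay this fast because ``the minimal solution does not vanish at $0$.'' This is not proved, and it cannot be taken for granted: there is no general obstruction to a minimal solution of a three-term recurrence vanishing at a prescribed index, and here the coefficients $c_n=\alpha n f'(\xi_n)$ depend on the unknown second solution $\tilde x$ through the mean-value points $\xi_n$, so there is no fixed recurrence whose minimal solution you could analyze in advance. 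Asserting that its value at $n=0$ is nonzero is essentially a restatement of the uniqueness you are trying to prove. The divergence and sign-change regimes you handle are the easy part; the fast-decay regime is exactly what the weighted contraction in \cite[\S 3.3]{WVA3} rules out for $\alpha>1$ and what remains open for $0<\alpha\leq1$, so as written your proof is incomplete at its central point.
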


A proof of this result can be found in \cite[\S 3.3]{WVA3} for $\alpha >1$; a proof for $0 < \alpha \leq 1$ has not been published
and we invite the reader to come up with such a proof. 
This special solution converges to zero (fast).


\subsection{The Ablowitz-Ladik lattice and Painlev\'e III}
The lattice equations corresponding to orthogonal polynomials on the unit circle are the \textit{Ablowitz-Ladik lattice}
equations (or the Schur flow).

\begin{theorem}
Let $\nu$ be a positive measure on the unit circle which is symmetric (the Verblunsky coefficients are real).
Let $\nu_t$ be the modified measure $d\nu_t(\theta) = e^{t\cos \theta}\, d\nu(\theta)$, with $t \in \mathbb{R}$.
The Verblunsky coefficients $(\alpha_n(t))_n$ for the measure $\nu_t$ then satisfy
\[      2{\alpha}_n' = (1-\alpha_n^2)(\alpha_{n+1}-\alpha_{n-1}), \qquad n \geq 0.   \]
\end{theorem}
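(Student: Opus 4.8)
The plan is to mimic the derivation of the Toda equations (Theorem \ref{thm:Toda}) but now working with orthogonal polynomials on the unit circle and the Szeg\H{o} recurrence \eqref{RRopuc}. First I would establish the analogue of the "Toda ansatz": if $\Phi_n(z;t)$ denotes the monic OPUC for the modified measure $d\nu_t(\theta) = e^{t\cos\theta}\,d\nu(\theta)$, then $\frac{d}{dt}\Phi_n(z;t)$ is a polynomial of degree $\le n-1$ (since $\Phi_n$ is monic), and differentiating the orthogonality relations $\int \Phi_n(z;t)\overline{z}^k e^{t\cos\theta}\,d\nu(\theta)=0$ for $0\le k\le n-1$ with respect to $t$ picks up an extra factor $\cos\theta = \frac12(z+z^{-1})$. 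Using the orthogonality of $\Phi_n$ against $z^{-1},\dots,z^{-(n-1)}$ one finds that $\frac{d}{dt}\Phi_n$ is orthogonal to $1,z,\dots,z^{n-2}$ in the appropriate sense; since $\Phi_n^*$ spans the orthogonal complement of $\{z,\dots,z^{n-1}\}$ among degree-$\le n-1$ polynomials, one gets $\frac{d}{dt}\Phi_n(z;t) = c_n(t)\,\Phi_{n-1}(z;t) + d_n(t)\,\Phi_{n-1}^*(z;t)$ for scalar functions $c_n,d_n$. Matching the constant term (the value at $z=0$, i.e.\ the Verblunsky coefficient) and the leading coefficient will eventually be what pins down $c_n,d_n$.

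Next I would impose compatibility between this $t$-evolution and the Szeg\H{o} recurrence $z\Phi_n = \Phi_{n+1} + \overline{\alpha_n}\Phi_n^*$, together with the companion relation $\Phi_{n+1}^* = \Phi_n^* - \alpha_n z\Phi_n$. Differentiating $z\Phi_n = \Phi_{n+1}+\overline{\alpha_n}\Phi_n^*$ in $t$, substituting the ansatz for $\frac{d}{dt}\Phi_n$, $\frac{d}{dt}\Phi_{n+1}$ and (using the reversal operation, which interacts with $d/dt$ in a controlled way since the $\alpha_n$ are real here) for $\frac{d}{dt}\Phi_n^*$, and then re-expanding everything back into the basis $\{\Phi_n,\Phi_{n-1},\Phi_n^*,\Phi_{n-1}^*\}$, yields a system of scalar identities among $\alpha_n$, $\alpha_n'$, $c_n$, $d_n$. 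I expect these to collapse, exactly as in the Toda case where $C_n=-a_n^2$ emerges, to an explicit identification of $c_n$ and $d_n$ in terms of the $\alpha_j$; in particular one should find something like $d_n = \tfrac12(1-\alpha_{n-1}^2)$ (after normalizing the modification as $e^{t\cos\theta}$), using $\kappa_n^2/\kappa_{n-1}^2 = (1-|\alpha_{n-1}|^2)^{-1}$ and the known structure relation \eqref{SRopuc}, whose coefficient $B_n = \tfrac{t}{2}\kappa_{n-2}^2/\kappa_n^2$ already exhibits the same $(1-\alpha^2)$ factors.

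Finally, reading off the evolution of the Verblunsky coefficient itself: since $\alpha_n = -\overline{\Phi_{n+1}(0)}$ and everything is real, $\alpha_n' = -\frac{d}{dt}\Phi_{n+1}(0;t)$, and evaluating the ansatz $\frac{d}{dt}\Phi_{n+1} = c_{n+1}\Phi_n + d_{n+1}\Phi_{n+1}^*$ at $z=0$ (using $\Phi_n(0) = -\overline{\alpha_{n-1}}$ and $\Phi_{n+1}^*(0)=1$) produces a linear combination of $\alpha_{n-1}$ and $1$ with the coefficients $c_{n+1},d_{n+1}$ found above; after simplification this should be exactly $2\alpha_n' = (1-\alpha_n^2)(\alpha_{n+1}-\alpha_{n-1})$. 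The boundary case $n=0$ needs $\alpha_{-1}=-1$ (so the factor $1-\alpha_{-1}^2$ vanishes and no spurious term appears), which is the standard convention. The main obstacle I anticipate is bookkeeping the reversal operation $*$ under $d/dt$ and the recurrence simultaneously — i.e.\ getting the four-term re-expansion consistent and verifying the algebraic collapse to the clean $(1-\alpha_n^2)$ form — rather than any conceptual difficulty; this is the step where, as the author says elsewhere, "quite a few computations" are hidden.
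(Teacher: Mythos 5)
The paper states this theorem without proof, so the natural benchmark is the paper's own derivation of the Toda equations, which your plan mimics; the overall route (a two-term ansatz for $\frac{d}{dt}\Phi_n$ in the basis $\Phi_{n-1},\Phi_{n-1}^*$, then reading off the constant term at $z=0$) is indeed a correct way to prove the Schur flow. However, two specific points in your write-up need repair. First, the orthogonality count is wrong as stated: differentiating $\int \Phi_n(z;t)\,\bar z^{\,k}\,d\nu_t=0$ and writing $\cos\theta=\tfrac12(z+z^{-1})$ kills the extra term only for $1\le k\le n-2$. For $k=0$ one gets $\langle \dot\Phi_n,1\rangle_t=-\tfrac12\int z\Phi_n\,d\nu_t=-\tfrac12\overline{\alpha_n}\,\|\Phi_n\|_t^2\neq 0$ in general, and for $k=n-1$ one gets $\langle \dot\Phi_n,z^{n-1}\rangle_t=-\tfrac12\|\Phi_n\|_t^2$. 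So $\dot\Phi_n$ is orthogonal to $z,\dots,z^{n-2}$ but \emph{not} to $1$; if it were orthogonal to $1$ as you claim, your ansatz would collapse to $c_n\Phi_{n-1}$ alone ($d_n=0$) and the theorem would fail. The corrected count still gives exactly your ansatz $\dot\Phi_n=c_n\Phi_{n-1}+d_n\Phi_{n-1}^*$, since the orthogonal complement of $\{z,\dots,z^{n-2}\}$ inside the polynomials of degree $\le n-1$ is two-dimensional and spanned by $\Phi_{n-1}$ and $\Phi_{n-1}^*$.

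Second, the two nonzero pairings above already determine the coefficients, with no need for the four-term compatibility bookkeeping (which, as in the Toda proof, would anyway require a seed computation to fix constants): pairing with $z^{n-1}$ gives $c_n=-\tfrac12(1-\alpha_{n-1}^2)$, and pairing with $1$, using $z\Phi_n=\Phi_{n+1}+\overline{\alpha_n}\Phi_n^*$ and $\int\Phi_n^*\,d\nu_t=\|\Phi_n\|_t^2$, gives $d_n=-\tfrac{\alpha_n}{2}(1-\alpha_{n-1}^2)$. Note the factor $-\alpha_n$, which is missing from your guessed $d_n=\tfrac12(1-\alpha_{n-1}^2)$; this factor is essential, because it is how $\alpha_{n+1}$ enters when you shift $n\to n+1$, and without it the evaluation at $z=0$ does not produce the Schur flow. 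With the corrected values, $\dot\Phi_{n+1}(0)=-\alpha_n'$, $\Phi_n(0)=-\alpha_{n-1}$, $\Phi_n^*(0)=1$ yield $-\alpha_n'=\tfrac12(1-\alpha_n^2)\alpha_{n-1}-\tfrac12\alpha_{n+1}(1-\alpha_n^2)$, i.e.\ $2\alpha_n'=(1-\alpha_n^2)(\alpha_{n+1}-\alpha_{n-1})$, with $n=0$ handled by the convention $\alpha_{-1}=-1$ as you note. Finally, fix the index slips: the reversed polynomial appearing in the decomposition of $\dot\Phi_{n+1}$ is $\Phi_n^*$, not $\Phi_{n+1}^*$ (harmless at $z=0$ only because both have constant term $1$).
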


We can now combine the discrete Painlev\'e II equation 
\[      \alpha_{n+1} + \alpha_{n-1} = \frac{-2n\alpha_n}{t(1-\alpha_n^2)}  \]
with the Ablowitz-Ladik equation
\[       \alpha_{n+1} - \alpha_{n-1} = \frac{2{\alpha}'_n}{1-\alpha_n^2} .  \]
Eliminate $\alpha_{n+1}$ and $\alpha_{n-1}$ to find
\[   {\alpha}''_n = - \frac{\alpha_n}{1-\alpha_n^2} ({\alpha}'_n)^2 - \frac{{\alpha}'_n}{t}
    - \alpha_n(1-\alpha_n^2) + \frac{(n+1)^2}{t^2} \frac{\alpha_n}{1-\alpha_n^2} .  \]

\noindent\shadowbox{\parbox{12cm}{
\begin{exercise}
If one puts $\alpha_n = \frac{1+y}{1-y}$, then show that $y$ satisfies the Painlev\'e V differential equation \eqref{P5} with $\gamma=0$.
\end{exercise}}}

Painlev\'e V with $\gamma=0$ can always be transformed to Painlev\'e III. A direct approach was given by Hisakado \cite{His} and Tracy and Widom \cite{TW}.
They showed that the ratio $w_n(t) = \alpha_n(t)/\alpha_{n-1}(t)$ satisfies Painlev\'e III.

\subsection{Some more examples}
Several more examples have been worked out in the literature the past few years. Here is a short sample.

\subsubsection{Generalized Meixner polynomials}
 These are discrete orthogonal polynomials
\[    \sum_{k=0}^\infty  P_n(k)P_m(k) \frac{(\gamma)_k a^k}{(\beta)_k k!} = 0, \qquad n \neq m, \]
which were considered in \cite{SmetVA,FilVA,Clarkson2}.
Put $a_n^2 = na -(\gamma-1)u_n$, and $b_n = n+\gamma-\beta+a - \frac{\gamma-1}{a} v_n$, then
\begin{eqnarray*}
      (u_n+v_n)(u_{n+1}+v_n) &=& \frac{\gamma-1}{a^2} v_n(v_n-a) \left( v_n - a \frac{\gamma-\beta}{\gamma-1} \right) , \\
      (u_n+v_n)(u_n+v_{n-1}) &=& \frac{u_n}{u_n-\frac{an}{\gamma-1}} (u_n+a) \left( u_n+a \frac{\gamma-\beta}{\gamma-1} \right). 
\end{eqnarray*}
The initial values are
\[   a_0^2=0, \quad b_0 = \frac{\gamma a}{\beta} \frac{M(\gamma+1,\beta+1,a)}{M(\gamma,\beta,a)},  \]
where $M(a,b,z)$ is Kummer's confluent hypergeometric function.       
This is asymmetric discrete Painlev\'e IV or $\textrm{d-P}(E_6^{(1)}/A_2^{(1)})$.
If we put
\[  v_n(a) = \frac{a\Bigl(ay'-(1+\beta-2\gamma)y^2+(n+1-a+\beta-2\gamma)y-n\Bigr)}{2(\gamma-1)(y-1)y} , \]
then
\[   y''=\left( \frac{1}{2y} +  \frac{1}{y-1} \right) (y')^2 - \frac{y'}{a} + \frac{(y-1)^2}{a^2} \bigl( Ay + \frac{B}{y} \bigr) + \frac{Cy}{a}
      + \frac{Dy(y+1)}{y-1}  \]
with
\[   A = \frac{(\beta-1)^2}{2}, \quad B = - \frac{n^2}{2}, \quad C=n-\beta+2\gamma, \quad D = -\frac12 , \]
which is Painlev\'e V given in \eqref{P5}.

 \subsubsection{Modified Laguerre polynomials}  Chen and Its \cite{ChenIts} (see also \cite[\S 4.4]{WVA3}) looked at orthogonal polynomials for the weight function
  $w(x) = x^\alpha e^{-x} e^{-t/x}$ on $[0,\infty)$. This is a modification of the Laguerre weight with an exponential function that has an essential singularity at $0$.
  Put $b_n= 2n+\alpha+1 + c_n$, $a_n^2 = n(n+\alpha) + y_n + \sum_{j=0}^{n-1} c_j$, and $c_n =1/x_n$, then 
   \begin{eqnarray*}
           x_n + x_{n-1} &=& \frac{nt -(2n+\alpha)y_n}{y_n(y_n-t)}, \\
           y_n + y_{n+1} &=& t- \frac{2n+\alpha+1}{x_n} - \frac{1}{x_n^2}.
   \end{eqnarray*}
   This corresponds to the discrete Painlev\'e equation $\textrm{d-P}((2A_1)^{(1)}/D_6^{(1)})$.
   The exponential modification is not of Toda type but belongs to a similar class of modifications (the Toda hierarchy). With some effort one can find
   the differential equation  
 \[  c_n'' = \frac{(c_n')^2}{c_n} - \frac{c_n'}{t} + (2n+\alpha+1) \frac{c_n^2}{t^2} + \frac{c_n^3}{t^2} + \frac{\alpha}{t} - \frac{1}{c_n} \]
which is Painlev\'e III given in \eqref{P3}. 

\subsubsection{Modified Jacobi polynomials} Basor, Chen and Ehrhardt \cite{BCE} (see also \cite[\S 5.2]{WVA3}) considered the weight $w(x) = (1-x)^\alpha(1+x)^\beta e^{-tx}$.
  This is a Toda modification of the weight function for Jacobi polynomials. In this case one has   
  \begin{eqnarray*}
    tb_n &=& 2n+1+\alpha + \beta -t-2R_n, \\ 
   t(t+R_n)a_n^2 &=& n(n+\beta)-(2n+\alpha+\beta)r_n - \frac{tr_n(r_n+\alpha)}{R_n}, 
\end{eqnarray*}
where $r_n$ and $R_n$ satisfy the recurrence relations
\begin{align*}
    2t(r_{n+1}+r_n) &= 4R_n^2 -2R_n(2n+1+\alpha+\beta-t)-2\alpha t, \\
    n(n+\beta)-(2n+\alpha+\beta)r_n &= r_n(r_n+\alpha) \left( \frac{t^2}{R_nR_{n-1}} + \frac{t}{R_n} + \frac{t}{R_{n-1}} \right), 
\end{align*}
and for $y = 1+t/R_n$ one has the differential equation
\begin{multline*}
  y'' = \frac{3y-1}{2y(y-1)} (y')^2 - \frac{y'}{t} + 2(2n+1+\alpha+\beta) \frac{y}{t} - \frac{2y(y+1)}{y-1} \\
   +\ \frac{(y-1)^2}{t^2} \left( \frac{\alpha^2y}{2} - \frac{\beta^2}{2y} \right),  
\end{multline*}
which is Painlev\'e V given in \eqref{P5}.

\subsubsection{$q$-orthogonal polynomials}
  There are also examples of families of $q$-orthogonal polynomials for which one can find $q$-discrete Painlev\'e equations for the recurrence coefficients.
  In this case the structure relation uses the $q$-difference operator $D_q$ for which
  \[   D_qf(x) = \frac{f(x)-f(qx)}{x(1-q)}.  \]
  If we consider the weight  
\[  w(x) = \frac{x^\alpha}{(-x^2;q^2)_\infty (-q^2/x^2;q^2)_\infty}, \qquad x \in [0,\infty) \]
then the recurrence coefficients (after some transformation) satisfy
$q$-discrete Painlev\'e III
\[    x_{n-1}x_{n+1} = \frac{(x_n+q^{-\alpha})^2}{(q^{n+\alpha}x_n+1)^2} . \]
For the weight
\[   w(x) = \frac{x^\alpha (-p/x^2;q^2)_\infty}{(-x^2;q^2)_\infty (-q^2/x^2;q^2)_\infty}, \qquad x \in [0,\infty) \]
one finds $q$-discrete Painlev\'e V
\[   (z_nz_{n-1}-1)(z_nz_{n+1}-1) = \frac{(z_n+ \sqrt{q^{2-\alpha}/p})^2(z_n \sqrt{pq^{\alpha-2}})^2}{(q^{n+\alpha/2-1}\sqrt{p} z_n+1)^2} . \]
and for 
\[   w(x) = x^\alpha(q^2x^2;q^2)_\infty , \qquad x \in \{ q^k, k=0,1,2,3,\ldots \} \]
one again finds $q$-discrete Painlev\'e V. Observe that sometimes the weights are on $[0,\infty)$ but they can also be on the discrete set $\{q^n, n \in \mathbb{N}\}$.
See \cite[\S 5.4]{WVA3} for more details.

\subsection{Wronskians and special function solutions}
There is a good explanation why these Toda modifications of orthogonal polynomials often give rise to Painlev\'e differential equations.
In fact the solutions that we need for the recurrence coefficients are special solutions of the Painlev\'e equations in terms of special functions,
such as the Airy functions, the Bessel functions, parabolic cylinder functions, the confluent hypergeometric function and the hypergeometric function.
Such special function solutions are often in terms of Wronskians of one of these special functions. We can easily explain where these Wronskians
are coming from, by using the theory of orthogonal polynomials. Indeed, we return to our Hankel determinants $D_n$ given in \eqref{Hankeldet}.
They contain the moments $m_n$, which for a Toda modification are
\[    m_n(t) = \int_{\mathbb{R}} x^n e^{xt} \, d\mu(x) = \frac{d^n}{dt^n} \int_{\mathbb{R}} e^{xt} \, \mu(x) = \frac{d^n}{dx^n} m_0(t).  \]
Hence all the moments are obtained from the moment $m_0(t)$ by differentiation, and the Hankel determinant \eqref{Hankeldet} becomes
\[ D_n = \det \begin{pmatrix}  m_0 & m_0' & m_0'' & \cdots & m_0^{(n-1)} \\
                               m_0' & m_0'' & m_0''' & \cdots & m_0^{(n)} \\
                               m_0'' & m_0''' & m_0^{(4)} & \cdots & m_0^{(n+1)} \\
                                  \vdots & \vdots & \vdots & \cdots & \vdots \\
                               m_0^{(n-1)} & m_0^{(n)} & m_0^{(n+1)} & \cdots & m_0^{(2n-2)}
                      \end{pmatrix} \]
which is the Wronskian of the functions $m_0,m_0',m_0'',\ldots,m_0^{(n-1)}$,
\[    D_n = \textup{Wr}(m_0,m_0',m_0'',\ldots,m_0^{(n-1)}).  \]
The recurrence coefficient $a_n^2$ can be expressed in terms of these Hankel determinants as
\[   a_n^2(t) = \frac{\gamma_{n-1}^2}{\gamma_n^2} = \frac{D_{n+1}(t)D_{n-1}(t)}{D_n^2(t)},  \]
where we used \eqref{gammaD}. The recurrence coefficients $b_n$ can also be found in terms of determinants. If we write $P_n(x)=x^n + \delta_n x^{n-1} + \cdots$ and
compare the coefficients of $x^n$ in the recurrence relation \eqref{M3TRR}, then $b_n = \delta_n-\delta_{n+1}$. The coefficient $\delta_n$ can be obtained from \eqref{detP}
from which we see that $\delta_n = -D_n^*/D_n$, where $D_n^*$ is obtained from $D_n$ by replacing the last column $(m_{n-1},m_n,\ldots,m_{(2n-2)})^T$ by moments of one order higher 
$(m_n,m_{n+1},\ldots,m_{2n-1})^T$. 
If we take a derivative of the Wronskian, then
\[   \frac{d}{dt} D_n = \textup{Wr}( m_0,m_0',m_0'',\ldots,m_0^{(n-2)}, m_0^{(n)}) = D_n^*, \]
so that
\[   b_n(t) = \frac{D_{n+1}'(t)}{D_{n+1}(t)} - \frac{D_n'(t)}{D_n(t)}. \]
This gives explicit expressions of the recurrence coefficients $a_n^2(t)$ and $b_n(t)$ in terms of Wronskians generated from one seed function $m_0(t)$.

\subsection*{Acknowledgement}
Many thanks to Mama Foupouagnigni and Wolfram Koepf for organizing the workshop \textit{Introduction to Orthogonal Polynomials and Applications} in Douala, Cameroon, and for
encouraging me to write this survey. Also thanks to Arno Kuij\-laars with whom I am sharing a course on 
\textit{Orthogonal Polynomials and Random Matrices} at KU Leuven, which
was very useful for the material in Section~\ref{sec:RM}.

\end{document}